\def\N{{\mathbb N}}
\newcommand\diverg{\mathop{\mbox{\rm div}}}
\theoremstyle{plain}
\numberwithin{equation}{section}
\newtheorem{theorem}{Theorem}[section]
\newtheorem{proposition}[theorem]{Proposition}
\newtheorem{Lemma}[theorem]{Lemma}
\newtheorem{corollary}[theorem]{Corollary}
\newtheorem{definition}[theorem]{Definition}
\theoremstyle{remark}
\newtheorem{remark}[theorem]{Remark}
\newcommand{\R}{{\mathbb R}}
\newcommand\Z{{\mathbb{Z}}}
\begin{document}
\bibliographystyle{plainmma}
\baselineskip=24pt

\begin{center}
{\Large\bf Global well-posedness for the 3D incompressible inhomogeneous Navier-Stokes equations and MHD equations }\\[2ex]
\footnote{$^{**}$
Email Address: pingxiaozhai@163.com (XP ZHAI); \
 mcsyzy@mail.sysu.edu.cn (Z. YIN).}
Xiaoping $\mbox{Zhai}^{1}$, Zhaoyang $\mbox{Yin}^{1,2}$\\[2ex]
$^1\mbox{Department}$ of Mathematics, Sun Yat-sen University,
\\[0.5ex]
 Guangzhou, Guangdong 510275, China
 \\[0.5ex]
$^2\mbox{Faculty}$ of Information Technology,\\ Macau University of Science and Technology, Macau, China
\end{center}

\bigskip
\centerline{\bf Abstract}
The present paper is dedicated to the global well-posedness   for the  3D inhomogeneous incompressible Navier-Stokes equations,  in critical Besov spaces  without smallness assumption on the variation of the density. We
aim at extending the work by Abidi, Gui and Zhang (Arch. Ration. Mech. Anal. 204 (1):189--230, 2012,  and J. Math. Pures Appl. 100 (1):166--203, 2013) to a more lower regularity index about the initial velocity. The key to that improvement is a new a priori estimate for an elliptic equation with nonconstant coefficients in Besov spaces which have the same degree as $L^2$ in $\R^3$. Finally, we also generalize our well-posedness result to the inhomogeneous incompressible MHD equations.

\noindent {\bf Key Words:}
Well-posedness $\cdot$ Navier-Stokes equations $\cdot$ MHD equations $\cdot$ Besov spaces

\noindent {\bf Mathematics Subject Classification (2010)}~~~35Q35 $\cdot$ 35Q30 $\cdot$ 76D03

\section{Introduction and the main results}
The first part of this paper is devoted to studying  the Cauchy problem of the 3D  incompressible inhomogeneous Navier-Stokes equations in critical Besov spaces
\begin{eqnarray}\label{nsmoxing}
\left\{\begin{aligned}
&\partial_t\rho+u\cdot\nabla \rho =0,\\
&\rho(\partial_t u+u\cdot\nabla u)-\diverg\big(2\mu(\rho)\mathcal{M}(u)\big)+\nabla\Pi=0,\\
&\diverg u =0,\\
&(\rho,u)|_{t=0}=(\rho_0,u_0),
\end{aligned}\right.
\end{eqnarray}
where $\rho$, $u=(u_{1},u_{2},u_{3})$, stand for the density, velocity, $\mathcal{M}(u)=\frac{1}{2}(\partial_{i}u_{j}+\partial_{j}u_{i})$, $\Pi$ is a scalar pressure function, the viscosity coefficient $\mu(\rho)$ is smooth, positive on $[0,\infty)$. This system
describes a fluid which is obtained by mixing two miscible fluids that are incompressible and that have different
densities. It may also describe a fluid containing a melted substance. One may check  \cite{lions} for the detailed derivation.

A lot of recent works have been dedicated to the mathematical study of the above system. In the case of smooth data with no vacuum, Lady$\check{z}$enskaja and
Solonnikov   first addressed
in \cite{ladyzenskaja} the question of unique solvability of \eqref{nsmoxing}, similar results were obtained by Danchin \cite{danchin2004} in $\R^n$
with initial data in the almost critical Sobolev spaces.
Global weak solutions with finite energy were constructed by Simon in \cite{simon} (see also
the book by Lions \cite{lions} for the variable viscosity case). Yet the regularity and uniqueness of such weak solutions are big open problems.
 Recently, Danchin and Mucha \cite{danchin2013} proved by using a Lagrangian approach that the system \eqref{nsmoxing} has a unique local solution with initial data $(\rho_0, u_0)\in L^{\infty}(\R^n)\times H^2(\R^n)$ if initial vacuum does not occur.
  When the density $\rho$ is away from zero, we denote by $a =\frac{1}{\rho}-1 $ and $\mu(\rho)=1$ which allow us to work with the following system:
\begin{eqnarray}\label{moxing}
\left\{\begin{aligned}
&\partial_t a+u\cdot\nabla a =0,\\
&\partial_t u+u\cdot\nabla u-(1+a)\Delta u+(1+a)\nabla\Pi=0,\\
&\diverg u=0,\\
&(a,u)|_{t=0}=(a_0,u_0).
\end{aligned}\right.
\end{eqnarray}
Similar to the classical Navier-Stoeks equations, the above system also has a scaling.
It is easy to see that the transformations:
$$(a_\lambda, u_\lambda )(t,x)=(a(\lambda^2\cdot,\lambda\cdot),\lambda u(\lambda^2\cdot,\lambda\cdot))$$
have that property, provided that the pressure term $\Pi$ and the initial data  have been changed accordingly.
We can also verify that the  space $\dot{B}_{q,1}^{\frac{n}{q}}({\mathbb R} ^n) \times\dot{B}_{p,1}^{-1+\frac{n}{p}}({\mathbb R} ^n)$   is the critical space for the system.

 When the initial density is close enough to a positive constant, Danchin in \cite{danchin2003} proved that if initial data $a_0\in\dot{B}_{2,\infty}^{\frac{n}{2}}({\mathbb R} ^n)\cap L^\infty(\R^n)$, $u_0\in \dot{B}_{2,1}^{-1+\frac{n}{2}}({\mathbb R} ^n)$, then the system (\ref{moxing}) has a unique local-in-time solution.  This result was improved  by Abidi in \cite{abidi2007+1}, where he proved that if initial  data $a_0\in\dot{B}_{p,1}^{\frac{n}{p}}({\mathbb R} ^n)$, $u_0\in \dot{B}_{p,1}^{-1+\frac{n}{p}}({\mathbb R} ^n)$ is small enough $1<p<2n, $  then (\ref{moxing}) has a global solution, moreover, this solution is unique if $1<p\le  n.$  The results  in \cite{abidi2007+1}, \cite{danchin2003}, \cite{danchin2004}   also were  improved by Abidi and Paicu in \cite{abidi2007} to a more general case $i,e.$ if $a_0\in\dot{B}_{q,1}^{\frac{n}{q}}({\mathbb R} ^n)$, $u_0\in \dot{B}_{p,1}^{-1+\frac{n}{p}}({\mathbb R} ^n)$ for $p, q$ satisfying some technical assumptions.
Very recently, Danchin and Mucha \cite{danchin2012} improved the uniqueness result in
\cite{abidi2007+1} for $p\in(n,2n)$ through Lagrange approach.  Huang, Paicu and Zhang in \cite{huangjingchi} first proved the global existence of weak solutions to the system (\ref{moxing}). The regularity of the initial velocity in \cite{huangjingchi} is critical to the scaling of this system and is general enough to generate non-Lipschitz velocity fields. Furthermore, with additional regularity assumptions on the initial velocity or on the initial density, they also proved the uniqueness of such a solution.
 Paicu and Zhang in \cite{paicu2012} could also get the global well-posedness only under the assumption that horizontal components of the initial velocity are small exponentially small compared with the  third component of the initial velocity. Chemin, Paicu and Zhang in \cite{chemin2014} generalized the result in \cite{paicu2012} to the critical anisotropic  Besov spaces.

 When the initial density is not close enough to a positive constant, Abidi, Gui and Zhang in \cite{abidi2012} firstly proved the global well-posedness of (\ref{moxing})  in the energy space if the initial $\|u_0\|_{\dot{B}_{2,1}^{\frac{1}{2}}}$ is small enough. Lately, this  results was improved in \cite{abidi2013} to the critical  $L^p$ framework.
 More precisely,  Abidi, Gui and Zhang in \cite{abidi2013} proved the following theorem:
\begin{theorem}\label{}
 Let $q\in[1,2]$, $p\in[3,4]$ and $\frac1p+\frac1q>\frac56$, $\frac1q-\frac1p\le\frac13$,
$(a_0,u_0)\in{B}_{q,1}^{\frac{3}{q}}(\R^3)\times\dot{B}_{p,1}^{-1+\frac3p}(\R^3)$
with $\diverg u_0=0$ and $1+\inf_{x\in\R^3}a_0(x)\ge \kappa>0$.  Then \eqref{moxing} has a  unique local solution $(a,u,\nabla\Pi)$ on $[0,T]$ such that
\begin{align}
a\in& C([0,T];{B}_{q,1}^{\frac{3}{q}}(\R^3))\cap \widetilde{L}_T^\infty(\dot{B}_{q,1}^{\frac{3}{q}}(\R^3)),
\ \nabla\Pi\in L_T^1(\dot{B}_{p,1}^{-1+\frac3p}(\R^3)),\nonumber\\
u\in& C([0,T];\dot{B}_{p,1}^{-1+\frac3p}(\R^3))\cap \widetilde{L}_T^\infty(\dot{B}_{p,1}^{-1+\frac3p}(\R^3))
\cap L_T^1(\dot{B}_{p,1}^{1+\frac3p}(\R^3)).
\end{align}
Moreover,  if there exists a small constant $c$ depending $\|a_0\|_{{B}_{q,1}^{\frac{3}{q}}}$
such that
$$\|u_0\|_{\dot{B}_{p,1}^{-1+\frac3p}}\le c,$$
then \eqref{moxing} has a global solution $(a,u,\nabla\Pi)$ such that for any $t>0$\rm{:}
\begin{align}
&\|a\|_{\widetilde{L}_t^\infty({B}_{q,1}^{\frac{3}{q}})}+\|u\|_{\widetilde{L}_t^\infty(\dot{B}_{p,1}^{-1+\frac3p})}+\|u\|_{L^1_t(\dot{B}_{p,1}^{1+\frac3p})}
+\|\nabla\Pi\|_{L^1_t(\dot{B}_{p,1}^{1+\frac3p})}\nonumber\\
\lesssim&C(\|a_0\|_{B_{p,1}^{\frac{3}{q}}}+\|u_0\|_{\dot{B}_{p,1}^{-1+\frac{3}{p}}})\exp\left\{C\exp\big(Ct^\frac12\big)\right\},
\end{align}
for some time independent of constant $C$.
\end{theorem}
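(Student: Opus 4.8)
As the statement is quoted from \cite{abidi2013}, I will only sketch the argument; it follows the now-classical strategy for \eqref{moxing} in critical spaces. Writing $b:=1+a=1/\rho$, so that $\kappa\le b$ and $b$ is transported by $u$, I would recast the momentum equation of \eqref{moxing} in Stokes form
\[
\partial_t u-\Delta u+\nabla\Pi=-u\cdot\nabla u+a\,\Delta u-a\,\nabla\Pi ,\qquad \diverg u=0 ,
\]
and note, on taking the divergence, that the pressure solves the elliptic equation with non-constant coefficient $\diverg\big((1+a)\nabla\Pi\big)=\diverg\big(a\,\Delta u-u\cdot\nabla u\big)$. The plan is in three stages: (i) construct approximate solutions and obtain uniform bounds on a short interval; (ii) pass to the limit and prove uniqueness; (iii) propagate the solution globally under the smallness of $\|u_0\|_{\dot B^{-1+3/p}_{p,1}}$.

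For (i) I would run a linear iteration in which $a^{n+1}$ solves the transport equation driven by $u^n$ with mollified data, while $(u^{n+1},\nabla\Pi^{n+1})$ solves the linear momentum system with variable coefficient $1+a^{n+1}$, convection by $u^n$, and the associated elliptic pressure equation, again with mollified data. For the uniform estimates, the density stays in the algebra $B^{3/q}_{q,1}$ with norm controlled, via Danchin's transport estimate, by $\|a_0\|_{B^{3/q}_{q,1}}$ times an exponential of a suitable time-integrated norm of $\nabla u^n$; the velocity is handled by a maximal-regularity estimate for the \emph{full} variable-coefficient operator $\partial_t-(1+a)\Delta$, whose constant depends only on $\|a\|_{\widetilde{L}^\infty_t(B^{3/q}_{q,1})}$ and $\kappa$ — this is the crucial point, since $\|a_0\|_{B^{3/q}_{q,1}}$ is \emph{not} assumed small and hence $a\,\Delta u$ cannot be moved to the source term and treated perturbatively. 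This reduces the velocity estimate to controlling, in $L^1_t(\dot B^{-1+3/p}_{p,1})$, the convection term and the pressure, the latter through the elliptic equation by an a priori bound for $\diverg((1+a)\nabla\Pi)=\diverg F$ in the critical $L^p$-type Besov scale. Paraproduct decompositions, the algebra property of $B^{3/q}_{q,1}$, and the hypotheses $p\in[3,4]$, $\tfrac1p+\tfrac1q>\tfrac56$, $\tfrac1q-\tfrac1p\le\tfrac13$ are exactly what make all these products continuous; one then closes
\[
\|a\|_{\widetilde{L}^\infty_T(B^{3/q}_{q,1})}+\|u\|_{\widetilde{L}^\infty_T(\dot B^{-1+3/p}_{p,1})}+\|u\|_{L^1_T(\dot B^{1+3/p}_{p,1})}+\|\nabla\Pi\|_{L^1_T(\dot B^{-1+3/p}_{p,1})}\le C_0
\]
uniformly in $n$ on an interval $[0,T]$ depending only on the data, and extracts a solution by compactness. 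For (ii), the difference of two solutions with the same data is estimated in spaces with one derivative less — the velocity in $\widetilde{L}^\infty_t(\dot B^{-2+3/p}_{p,1})\cap L^1_t(\dot B^{3/p}_{p,1})$, the density in $\widetilde{L}^\infty_t(\dot B^{3/q-1}_{q,1})$ — the loss on the density being forced by the transport equation; the restrictions on $(p,q)$ again make the relevant product and commutator estimates close, yielding a Gronwall inequality and hence uniqueness (and, incidentally, an alternative existence proof by contraction). Time continuity of $(a,u)$ then follows from the equations.

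For (iii), assume $\|u_0\|_{\dot B^{-1+3/p}_{p,1}}\le c$ and bootstrap on the maximal existence time $T^\ast$. As long as $\|a\|_{\widetilde{L}^\infty_t(B^{3/q}_{q,1})}$ stays comparable to $\|a_0\|_{B^{3/q}_{q,1}}$, the velocity estimate of (i) closes on $[0,t]$ and keeps $\|u\|_{\widetilde{L}^\infty_t(\dot B^{-1+3/p}_{p,1})}+\|u\|_{L^1_t(\dot B^{1+3/p}_{p,1})}$ of size $c$; a (generalized) energy-type bound gives an $L^2$-in-time control of $\nabla u$, which Cauchy--Schwarz in time turns into an $L^1$-in-time bound carrying a factor $t^{1/2}$; fed into the transport estimate for $a$ this yields $\|a\|_{\widetilde{L}^\infty_t(B^{3/q}_{q,1})}\lesssim\|a_0\|_{B^{3/q}_{q,1}}\exp(Ct^{1/2})$, and re-injected into the velocity estimate, whose constant is exponential in $\|a\|_{\widetilde{L}^\infty_t(B^{3/q}_{q,1})}$, it produces precisely the double-exponential control $\exp\{C\exp(Ct^{1/2})\}$. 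For $c=c(\|a_0\|_{B^{3/q}_{q,1}})$ small enough none of the controlled norms can blow up at $T^\ast$, and the standard continuation criterion gives $T^\ast=+\infty$.

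The hard part is the a priori estimate for the pressure — equivalently for $\diverg((1+a)\nabla\Pi)=\diverg F$ — and, jointly, the maximal regularity of $\partial_t-(1+a)\Delta$ in the critical space, when $a\in B^{3/q}_{q,1}$ is merely bounded (not small) and $u$ has the weak regularity $\dot B^{-1+3/p}_{p,1}$ with $p$ as large as $4$. This is exactly where the technical conditions $\tfrac1p+\tfrac1q>\tfrac56$ and $\tfrac1q-\tfrac1p\le\tfrac13$ are needed, and relaxing them — by working in Besov spaces of the scaling degree of $L^2(\R^3)$ — is the subject of the rest of the paper.
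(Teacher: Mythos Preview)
Your outline captures the broad three-stage strategy, but it diverges from \cite{abidi2013} (and from the present paper's reproof of the more general case) at precisely the technical points that make the theorem nontrivial.

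\textbf{Pressure.} You say the pressure is handled ``by an a priori bound for $\diverg((1+a)\nabla\Pi)=\diverg F$ in the critical $L^p$-type Besov scale.'' In \cite{abidi2013} this bound is \emph{not} obtained directly in $\dot B^{-1+3/p}_{p,1}$; the key observation there is that one can first control $\|\nabla\Pi\|_{L^2}$ (Lax--Milgram type estimate from $1+a\ge\kappa$), and then bootstrap to $\dot B^{-1+3/p}_{p,1}$ via commutator estimates. The passage through $L^2$ is exactly where the condition $\tfrac1p+\tfrac1q>\tfrac56$ enters: it is needed so that the source terms (in particular $u\cdot\nabla u$ with $u\in\dot B^{-1+3/p}_{p,1}$) lie in $L^2$. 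Your sketch does not identify this mechanism, and the closing sentence of your proposal suggests you are aware the pressure step is the crux but have not pinned down how \cite{abidi2013} actually handles it.

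\textbf{Uniqueness.} You propose to estimate the difference in $\widetilde L^\infty_t(\dot B^{-2+3/p}_{p,1})\cap L^1_t(\dot B^{3/p}_{p,1})$ for $u$ and $\widetilde L^\infty_t(\dot B^{3/q-1}_{q,1})$ for $a$. For $p\in[3,4]$ the index $-2+3/p$ lies in $[-5/4,-1]$, and several of the product/commutator estimates you would need (e.g.\ $\delta a\,(\Delta u^1-\nabla\Pi^1)$) fail at that regularity. In \cite{abidi2013}, and in the present paper, uniqueness is instead proved in the $L^2$-based inhomogeneous scale: $\delta u\in\widetilde L^\infty_t(B^{-1/2}_{2,1})\cap L^1_t(B^{3/2}_{2,1})$, $\delta a\in\widetilde L^\infty_t(B^{1/2}_{2,1})$, together with an auxiliary $H^{-1/2-\alpha}$ estimate for $\nabla\delta\Pi$ (Proposition~\ref{weiyixingmingti} here). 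This is a genuinely different route, and your proposed space does not appear to close.

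\textbf{Global existence.} Your bootstrap (``as long as $\|a\|$ stays comparable to $\|a_0\|$ the velocity stays of size $c$, then an energy bound gives $L^2_t$ control of $\nabla u$, hence $t^{1/2}$ growth'') is circular as written: the constants in the Besov velocity estimate depend on $\|a\|$, which in turn grows through $\int\|\nabla u\|_{L^\infty}$. The argument in \cite{abidi2013} (reproduced in Section~6 here for the MHD case) is more elaborate: after a short time $t_1$, one splits $u=v+w$ where $v$ solves the \emph{homogeneous} Navier--Stokes equations with data $u(t_1)$ (globally small by hypothesis), and then derives successive $L^2$, $H^1$, $H^2$ energy estimates for the perturbation $w$ against the density-dependent operator. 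These Sobolev bounds yield $\|\nabla u\|_{L^1([t_1,t];L^\infty)}\lesssim 1+t^{1/2}$ via interpolation, which feeds into the transport estimate for $a$ and produces the $\exp\{C\exp(Ct^{1/2})\}$ factor. The splitting and the chain of energy estimates are essential; a direct bootstrap in the critical Besov norm alone does not close.
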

The main purpose of this paper is to improve the  well-posedness results from \cite{abidi2012, abidi2013}. In particular, we want to improve the index $p$ in \cite{abidi2013} to an ideal range, $i.e.$ $1<p<6$. The main difficulty in \cite{abidi2013} is to deal with the pressure function. Their main observations are that  with $p, q$ satisfying the restrictions $p\in[3,4], q\in[1,2]$, $\frac1q+\frac1p>\frac56$ and $\frac1q-\frac1p\le\frac13,$
they can use the $L^2$ estimate of $\nabla\Pi$. One can sketch more details  about the estimate $\|\nabla\Pi\|_{L^2}$ in \cite{abidi2012, abidi2013}.
In the present paper, we will instead  using $\|\nabla\Pi\|_{L^2}$ by $\|\nabla\Pi\|_{\dot{B}_{p,2}^{-\frac32+\frac3p}}$. This is  reasonable for the two spaces have the same degree in $\R^3$.
By using  a new commutator  of integral type, we can get the solution mapping $\mathcal{H}_a:F\mapsto\nabla\Pi$ to the 3D elliptic equation
$\mathrm{div}\big((1+a)\nabla\Pi\big)=\mathrm{div} F$ is bounded on $\dot{B}_{p,2}^{\frac3p-\frac32}(\mathbb{R}^3)$. More precisely, we prove, if  $1< q\le p$ with $p\in(\frac{1+\sqrt{17}}{4},\frac{5+\sqrt{17}}{2})$ and $\frac1p+\frac1q\geq\frac12$, $a\in\dot{B}_{q,1}^{\frac{3}{q}}(\R^3)$ with $$1+a\geq\kappa>0,$$
then
$$\|\nabla\Pi\|_{\dot{B}_{p,2}^{\frac3p-\frac32}}\leq C\big(1+\|a\|_{\dot{B}_{q,1}^{\frac3q}}\big)\|F\|_{\dot{B}_{p,1}^{\frac3p-\frac32}}.$$
However, it's a pity that we still cannot fill the gap $p$ in $[\frac{5+\sqrt{17}}{2},6)$.

The first main result of the present paper is stated in the following theorem:
\begin{theorem}\label{zhuyaodingli1}
 Let $1< q\le p$ with $p\in(1,\frac{5+\sqrt{17}}{2})$ and $\frac12-\frac1p\le\frac1q\le\frac1p+\frac13,$ $(a_0,u_0)\in\dot{B}_{q,1}^{\frac{3}{q}}(\R^3)\times\dot{B}_{p,1}^{-1+\frac3p}(\R^3)$
with $\diverg u_0=0$ and $1+\inf_{x\in\R^3}a_0(x)\ge \kappa>0$.  Then \eqref{moxing} has a  local solution $(a,u,\nabla\Pi)$ on $[0,T]$ such that
\begin{align}
a\in& C([0,T];{B}_{q,1}^{\frac{3}{q}}(\R^3))\cap \widetilde{L}_T^\infty(\dot{B}_{q,1}^{\frac{3}{q}}(\R^3)),
\ \nabla\Pi\in L_T^1(\dot{B}_{p,1}^{-1+\frac3p}(\R^3)),\nonumber\\
u\in& C([0,T];\dot{B}_{p,1}^{-1+\frac3p}(\R^3))\cap \widetilde{L}_T^\infty(\dot{B}_{p,1}^{-1+\frac3p}(\R^3))
\cap L_T^1(\dot{B}_{p,1}^{1+\frac3p}(\R^3)).
\end{align}
Especially, if $a_0\in{B}_{q,1}^{\frac{3}{q}}(\R^3)$, $p\in[3,4], q\in[1,2]$, this solution is unique.
Moreover,  if  there exists a small constant $c$ depending $\|a_0\|_{{B}_{q,1}^{\frac{3}{q}}}$
such that
$$\|u_0\|_{\dot{B}_{p,1}^{-1+\frac3p}}\le c,$$
then \eqref{moxing} has a global solution $(a,u,\nabla\Pi)$ such that for any $t>0$\rm{:}
\begin{align}
&\|a\|_{\widetilde{L}_t^\infty({B}_{q,1}^{\frac{3}{q}})}+\|u\|_{\widetilde{L}_t^\infty(\dot{B}_{p,1}^{-1+\frac3p})}+\|u\|_{L^1_t(\dot{B}_{p,1}^{1+\frac3p})}
+\|\nabla\Pi\|_{L^1_t(\dot{B}_{p,1}^{1+\frac3p})}\nonumber\\
\lesssim&C(\|a_0\|_{B_{p,1}^{\frac{3}{q}}}+\|u_0\|_{\dot{B}_{p,1}^{-1+\frac{3}{p}}})\exp\left\{C\exp\big(Ct^\frac12\big)\right\},
\end{align}
for some time independent of constant $C$.
\end{theorem}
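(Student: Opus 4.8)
The plan is to follow the by-now classical route for the inhomogeneous Navier--Stokes system in critical spaces --- a priori estimates, approximation, compactness, and a continuation argument --- but with the energy-type estimates carried out in the Besov space $\dot{B}^{3/p-3/2}_{p,2}$, which has the $L^2$-scaling in $\R^3$, rather than in $L^2$ itself. First I would record the density estimate: since $\diverg u=0$, the transport equation $\partial_t a+u\cdot\nabla a=0$ yields, by the logarithmic transport estimate in Besov spaces,
$$\|a\|_{\widetilde{L}^\infty_t(\dot{B}^{3/q}_{q,1})}\le\|a_0\|_{\dot{B}^{3/q}_{q,1}}\exp\Big(C\int_0^t\|\nabla u\|_{\dot{B}^{3/p}_{p,1}}\,d\tau\Big),$$
the index $3/q$ being admissible precisely because of the hypothesis $\tfrac12-\tfrac1p\le\tfrac1q\le\tfrac1p+\tfrac13$; the upper bound on $1/q$ is what allows $3/q$ to sit at the endpoint of the transport range, and the lower bound is what makes the various paraproduct/remainder estimates between functions in $\dot{B}^{3/q}_{q,1}$ and $\dot{B}^{s}_{p,r}$-type spaces close.

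Next, rewrite the momentum equation as a constant-coefficient Stokes system $\partial_t u-\Delta u+\nabla\Pi=-u\cdot\nabla u+a\Delta u-a\nabla\Pi$, $\diverg u=0$, and apply maximal regularity in $\dot{B}^{-1+3/p}_{p,1}$ to control
$$X_c(t):=\|u\|_{\widetilde{L}^\infty_t(\dot{B}^{-1+3/p}_{p,1})}+\|u\|_{L^1_t(\dot{B}^{1+3/p}_{p,1})}+\|\nabla\Pi\|_{L^1_t(\dot{B}^{-1+3/p}_{p,1})}.$$
The pressure is closed by taking the divergence of the momentum equation,
$$\diverg\big((1+a)\nabla\Pi\big)=\diverg\big(a\Delta u-u\cdot\nabla u\big),$$
and invoking the new elliptic estimate announced in the introduction in the form
$$\|\nabla\Pi\|_{\dot{B}^{3/p-3/2}_{p,2}}\le C\big(1+\|a\|_{\dot{B}^{3/q}_{q,1}}\big)\|a\Delta u-u\cdot\nabla u\|_{\dot{B}^{3/p-3/2}_{p,1}},$$
together with its critical-regularity analogue in $\dot{B}^{-1+3/p}_{p,1}$; the nonlinear source terms are handled by the standard product laws, with $a\nabla\Pi$ and $a\Delta u$ estimated using $a\in\widetilde{L}^\infty_t(\dot{B}^{3/q}_{q,1})$.

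The core step is the energy estimate in the $L^2$-scaled space: one shows that
$$Y(t):=\|u\|_{\widetilde{L}^\infty_t(\dot{B}^{3/p-3/2}_{p,2})}+\|u\|_{L^2_t(\dot{B}^{3/p-1/2}_{p,2})}+\|\nabla\Pi\|_{L^2_t(\dot{B}^{3/p-3/2}_{p,2})}$$
is controlled by the data through a Gronwall argument whose exponent involves $\int_0^t\|u\|_{\dot{B}^{3/p-1/2}_{p,2}}^2\,d\tau$, itself bounded by $Y(t)^2$; the Cauchy--Schwarz inequality in time converts this into the $\exp(Ct^{1/2})$ factor. Feeding $Y$ back into the estimate for $X_c$ and using the smallness of $\|u_0\|_{\dot{B}^{-1+3/p}_{p,1}}$ to absorb the nonlinear contributions produces the stated double-exponential bound. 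Local existence follows by running this scheme on smooth approximate solutions (Friedrichs regularization, or the iteration scheme of Abidi--Paicu), extracting the uniform bounds above and passing to the limit by compactness; under the smallness assumption the a priori bounds are global, so the continuation criterion promotes the local solution to a global one. Uniqueness, in the restricted range $p\in[3,4]$, $q\in[1,2]$ with $a_0\in B^{3/q}_{q,1}$, is obtained by estimating the difference of two solutions in a space of one lower regularity, a Lagrangian reformulation in the spirit of Danchin--Mucha being the cleanest way to avoid losing derivatives on $a$.

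The main obstacle I anticipate is proving and then correctly exploiting the new elliptic estimate for $\diverg((1+a)\nabla\Pi)=\diverg F$ in $\dot{B}^{3/p-3/2}_{p,2}$: one must use the ``integral-type commutator'' to absorb the non-constant coefficient $1+a$ while $a$ only lies in $\dot{B}^{3/q}_{q,1}$, and the restriction $p<\frac{5+\sqrt{17}}{2}$ together with $\tfrac1p+\tfrac1q\ge\tfrac12$ is exactly what is needed for the resulting paraproduct and remainder terms to land back in $\dot{B}^{3/p-3/2}_{p,1}$. A secondary difficulty is the bookkeeping required to close $X_c$ and $Y$ simultaneously: since $\nabla\Pi$ appears both in the Stokes forcing and in the elliptic equation, the estimates must be arranged so that the pressure contributions are either small (via the smallness of $u_0$) or absorbed into the left-hand side.
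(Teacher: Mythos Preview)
Your sketch captures the right elliptic estimate and the transport estimate, but there are two genuine gaps.

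\medskip
\noindent\textbf{Local existence with large density.} Treating $a\Delta u$ and $a\nabla\Pi$ as forcing in a \emph{constant}-coefficient Stokes problem and invoking maximal regularity only closes if $\|a\|_{\dot{B}^{3/q}_{q,1}}$ is small enough to be absorbed on the left. Since no smallness on $a_0$ is assumed, this does not work as stated. The paper's device is a low/high frequency splitting $a=\dot S_ma+(a-\dot S_ma)$: one keeps the \emph{variable}-coefficient operator $-\diverg((1+\dot S_ma)\nabla\,\cdot\,)$ on the left (localizing with $\dot\Delta_j$, multiplying by $|\dot\Delta_j u|^{p-2}\dot\Delta_j u$, and using Lemma~8 of Danchin's $L^p$--Euler paper), treats the high-frequency remainder $(a-\dot S_ma)$ perturbatively, and bounds the commutators $[\dot\Delta_j,\dot S_ma]$ by $2^m\|a\|_{\dot B^{3/q}_{q,1}}$ times a \emph{subcritical} norm of $u$. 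The transport estimate for $a-\dot S_ma$ then makes $(1+\|a\|)^2\|a-\dot S_ma\|$ small for $m$ large and $T$ small, and the $2^m,\,2^{m/2}$ factors are balanced against the short time horizon. Without this mechanism your bound for $X_c$ does not close; you should insert it before appealing to the elliptic estimate.

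\medskip
\noindent\textbf{Global bound and the $\exp(C\exp(Ct^{1/2}))$ growth.} Your $Y(t)$ argument, as written, is circular: the Gronwall exponent for $Y$ is bounded by $Y^2$, which gives nothing until $Y$ is known to be bounded, and no Cauchy--Schwarz step produces $t^{1/2}$ from that. The paper proceeds quite differently. After local existence on $[0,T^\ast]$ with $T^\ast>1$, a smoothing effect gives $u(t_1)\in\dot B^{3/p}_{p,1}\cap\dot B^{2+3/p}_{p,1}$ small for any $t_1\in(0,T^\ast)$. One then splits $u=v+w$ on $[t_1,\infty)$, where $v$ is the global small solution of the \emph{homogeneous} Navier--Stokes system with data $u(t_1)$. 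For $w$ (which solves a perturbed system with vanishing initial data) one runs classical $L^2$--$H^1$--$H^2$ energy estimates; these are uniform in time because $v$ is globally small. The factor $t^{1/2}$ appears only at the final bootstrap, from Cauchy--Schwarz when converting the uniform $\|\nabla^2 w\|_{L^2_tL^6}$ bound into $\|u\|_{L^1_t(\dot B^{3/2}_{6,1})}\le C(1+t^{1/2})$, which then feeds the exponential in the transport estimate for $a$ and in turn the outer exponential in the Besov estimate for $u$.

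\medskip
\noindent\textbf{Uniqueness.} The paper does not use Lagrangian coordinates. It estimates the difference of two solutions in $B^{-1/2}_{2,1}$ (one order below the solution space), combining a linear estimate of Abidi--Gui--Zhang for the Stokes problem with non-constant coefficient $(1+S_ma^2)$ and a pressure bound in $H^{-1/2-\alpha}$; this is precisely where the extra hypotheses $p\in[3,4]$, $q\in[1,2]$ and $a_0\in B^{3/q}_{q,1}$ are used. A Lagrangian argument might also work, but it is not the route taken here.
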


\begin{remark}
  The  existence of our theorem  only requires that the initial density belongs to the homogeneous Besov space $\dot{B}_{q,1}^{\frac{3}{q}}(\R^3)$  and also extends the $p,q$ in \cite{abidi2013} to be a more larger range. The  uniqueness of our theorem   has removed the technical assumption $\frac1p+\frac1q>\frac56$  when $p\in[3,4], q\in[1,2]$ in \cite{abidi2013}. The key to this  improvement
 is that we donot need to make the $L^2$ estimate for the pressure.
  Thus, we will not use the $L^2$ estimate for the nonlinear terms in the  momentum equation.
\end{remark}
\begin{remark}
Our main result also allows the initial velocity field and initial density function to be highly oscillatory just as in  \cite{abidi2012,abidi2013}.
\end{remark}

We also generalize the above result to the 3D  incompressible inhomogeneous MHD system \cite{davidson} with variable
electrical conductivity which has the following form:
\begin{eqnarray}\label{mhddmoxing}
\left\{\begin{aligned}
&\partial_t\rho+u\cdot\nabla \rho =0,\\
&\rho(\partial_t u+u\cdot\nabla u)-\Delta u+\nabla\Pi=B\cdot\nabla B,\\
&\partial_t B+u\cdot\nabla B-\diverg\left(\frac{\nabla B}{\sigma(\rho)}\right)=B\cdot\nabla u,\\
&\diverg u =\diverg B=0,\\
&(\rho,u,B)|_{t=0}=(\rho_0,u_0,B_0),
\end{aligned}\right.
\end{eqnarray}
where $\rho$ is the density and $u$ is the velocity field, $B$
is the magnetic field, $\mathcal{M}(u)=\frac12(\nabla u+\nabla u^T)$ is the symmetrical part of the gradient, $\Pi(x,t)$ is the scalar pressure,
 $\sigma(\rho)>0$ is the electrical conductivity of the field. Moreover, we suppose that $\sigma(\rho)$ is a $C^{\infty}$ function and that
$
0<\underline{\sigma}\le\frac{1}{\sigma(\rho)}
\le\bar{\sigma}<\infty.$

Similar to \eqref{moxing},  when the density $\rho$ is away from zero, we can also use the transform
$a =\frac{1}{\rho}-1 $  to change \eqref{mhddmoxing} into the following system:
\begin{eqnarray}\label{mhdmoxing}
\left\{\begin{aligned}
&\partial_t a+u\cdot\nabla a =0,\\
&\partial_t u+u\cdot\nabla u-(1+a)\Delta u+(1+a)\nabla\Pi=(1+a)B\cdot\nabla B,\\
&\partial_t B+u\cdot\nabla B-\diverg\left(\widetilde{\sigma}(a)\nabla B\right)=B\cdot\nabla u,\\
&\diverg u =\diverg B=0,\\
&(a,u,B)|_{t=0}=(a_0,u_0,B_0),
\end{aligned}\right.
\end{eqnarray}
where  $\widetilde{\sigma}(a)=\frac{1}{\sigma(\frac{1}{1+a})}$ is a  smooth function.

Compared with the Navier-Stokes equations,
 the dynamic motion of the fluid and the magnetic field interact on each other and both the hydrodynamic and electrodynamic effects in the motion are strongly coupled, the problems of MHD system are considerably more complicated. Even through, in the past several years, there are also many mathematical results related to the
incompressible MHD system (see \cite{cao2010}, \cite{desjardins}, \cite{duvaut}, \cite{hecheng3}, \cite{linfanghua}, \cite{miaochangxing}, \cite{sermange}). For the homogeneous viscous incompressible MHD system (i.e. $\rho(t, x)$= constant), Duvaut and Lions \cite{duvaut} established the local existence and uniqueness of
solution in the classical Sobolev space $H^s(\mathbb{R}^n), s \ge n,$ they also proved the
global existence of solutions to this system with small initial data. Sermange and Temam \cite{sermange}
proved the global unique solution in $\R^2$. With mixed partial dissipation and additional magnetic diffusion in the two-dimensional MHD system, Cao and Wu \cite{caochongsheng}  proved that such a system is globally well-posed for any data in $H^2(\mathbb{R}^2)$. In a recent remarkable paper  Lin, Xu and Zhang \cite{linfanghua2013}   proved the global existence of smooth solution of the 2-D MHD system  around the trivial solution
$(x_2,0)$ (see \cite{linfanghua} for 3-D case). In \cite{renxiaoxia}, Ren, Wu, Xiang and Zhang  got the global existence and the decay estimates of small smooth solution for the 2-D MHD equations without
magnetic diffusion.
When the fluid is nonhomogeneous, Gerbeau and Le Bris \cite{gerbeau} (see also Desjardins and Le Bris \cite{desjardins}) studied the global existence of weak solutions of finite energy in the whole space or in the torus.  Abidi and Paicu \cite{abidi2008} established the global existence of strong solutions with small initial data in the critical Besov spaces. Moreover, they allowed variable viscosity and conductivity coefficients
but required an essential assumption that there is no vacuum (more precisely, the initial data are
closed to a constant state). Zhai, Li and Yan in \cite{zhaixiaoping} also considered the global well-posedness for \eqref{moxing} in critical Besov spaces.  By using the  Gagliardo-Nirenberg inequality, they obtained the global existence  for this system without any small conditions imposed on the third components of the initial velocity field and magnetic field, which can be regarded as an improvement of \cite{abidi2008}.
 Chen, Tan, and Wang \cite{chenqing} extended the local existence in presence of vacuum by using the Galerkin method, energy method and the domain expansion technique. Lately , with initial data satisfying some compatibility conditions, by using a critical Sobolev inequality of logarithmic type, Huang and Wang \cite{huangxiangdi} got the global strong solution to the 2-D nonhomogeneous incompressible MHD system. Recently,  Gui \cite{guiguilong} studied the Cauchy problem of the 2-D magnetohydrodynamic system with inhomogeneous density and electrical conductivity. He showed that this system with a constant viscosity is globally well-posed for a generic family of the variations of the initial data and an inhomogeneous electrical conductivity. Moreover, He established that the system is globally well-posed  if the electrical conductivity is homogeneous.

The second main result of this paper is stated in the following theorem:
\begin{theorem}\label{zhuyaodingli}
 Let $1< q\le p$ with $p\in(1,\frac{5+\sqrt{17}}{2})$ and $\frac12-\frac1p\le\frac1q\le\frac1p+\frac13,$ $(a_0,u_0,B_0)\in\dot{B}_{q,1}^{\frac{3}{q}}(\R^3)\times\dot{B}_{p,1}^{-1+\frac3p}(\R^3)\times\dot{B}_{p,1}^{-1+\frac3p}(\R^3)$
with $\diverg u_0=\diverg B_0=0$ and $1+\inf_{x\in\R^3}a_0(x)\ge \kappa>0$. Assume that $\widetilde{\sigma}(a)$
is a smooth, positive function on $[0,\infty)$. Then \eqref{mhdmoxing} has a  local solution $(a,u,B,\nabla\Pi)$ on $[0,T]$ such that
\begin{align}
a\in& C([0,T];\dot{{B}}_{q,1}^{\frac{3}{q}}(\R^3))\cap \widetilde{L}_T^\infty(\dot{B}_{q,1}^{\frac{3}{q}}(\R^3)),
\ \nabla\Pi\in L_T^1(\dot{B}_{p,1}^{-1+\frac3p}(\R^3)),\nonumber\\
u\in& C([0,T];\dot{B}_{p,1}^{-1+\frac3p}(\R^3))\cap \widetilde{L}_T^\infty(\dot{B}_{p,1}^{-1+\frac3p}(\R^3))
\cap L_T^1(\dot{B}_{p,1}^{1+\frac3p}(\R^3)),\nonumber\\
B\in& C([0,T];\dot{B}_{p,1}^{-1+\frac3p}(\R^3))\cap \widetilde{L}_T^\infty(\dot{B}_{p,1}^{-1+\frac3p}(\R^3))
\cap L_T^1(\dot{B}_{p,1}^{1+\frac3p}(\R^3)).
\end{align}
Especially, if $a_0\in{B}_{q,1}^{\frac{3}{q}}(\R^3)$, $p\in[3,4], q\in[1,2]$, this solution is unique.
Moreover, when  $\widetilde{\sigma}(a)$ is a  positive constant, and there exists a small constant $c$ depending $\|a_0\|_{{B}_{q,1}^{\frac{3}{q}}}$
such that
$$\|u_0\|_{\dot{B}_{p,1}^{-1+\frac3p}}+\|B_0\|_{\dot{B}_{p,1}^{-1+\frac3p}}\le c,$$
then \eqref{mhdmoxing} has a global solution $(a,u,B,\nabla\Pi)$ such that for any $t>0$\rm{:}
\begin{align}
&\|a\|_{\widetilde{L}^\infty({B}_{q,1}^{\frac{3}{q}})}+\|(u,B)\|_{\widetilde{L}^\infty(\dot{B}_{p,1}^{-1+\frac3p})}+\|(u,B)\|_{L^1(\dot{B}_{p,1}^{1+\frac3p})}
+\|\nabla\Pi\|_{L^1(\dot{B}_{p,1}^{1+\frac3p})}\nonumber\\
\lesssim&C(\|a_0\|_{B_{p,1}^{\frac{3}{q}}}+\|u_0\|_{\dot{B}_{p,1}^{-1+\frac{3}{p}}}+\|B_0\|_{\dot{B}_{p,1}^{-1+\frac{3}{p}}})\exp\left\{C\exp\big(Ct^\frac12\big)\right\},
\end{align}
for some time independent of constant $C$.
\end{theorem}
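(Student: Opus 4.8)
The plan is to follow the scheme already used for Theorem~\ref{zhuyaodingli1}, carrying the magnetic field $B$ on the same footing as the velocity $u$: the $B$-equation is parabolic of the same order as the $u$-equation, and the coupling terms $(1+a)B\cdot\nabla B$ and $B\cdot\nabla u$ have exactly the scaling of the convection term $u\cdot\nabla u$, so they do not alter the structure of the estimates. First I would construct a sequence of smooth global approximate solutions $(a^n,u^n,B^n,\nabla\Pi^n)$ by a Friedrichs-type truncation (or the iterative scheme of \cite{abidi2013}), solving at each step the linear transport equation for $a^n$, the linear Stokes-type problem for $(u^n,\nabla\Pi^n)$ with coefficient $1+a^{n-1}$, and the linear parabolic equation $\partial_t B^n-\diverg(\widetilde\sigma(a^{n-1})\nabla B^n)=\cdots$ for $B^n$.

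Second, and this is the heart of the proof, I would establish \emph{uniform} a priori bounds. For $a$ one uses the transport estimate in $\dot B_{q,1}^{3/q}$ (and its inhomogeneous counterpart), giving $\|a(t)\|_{\dot B_{q,1}^{3/q}}\le\|a_0\|_{\dot B_{q,1}^{3/q}}\exp\bigl(C\int_0^t\|\nabla u\|_{\dot B_{p,1}^{3/p}}\,d\tau\bigr)$. For $(u,B)$ one freezes the leading coefficients and writes
\[
\partial_t u-\Delta u+\nabla\Pi = a\Delta u-a\nabla\Pi-u\cdot\nabla u+(1+a)B\cdot\nabla B,\qquad
\partial_t B-\underline\sigma\Delta B=\diverg\bigl((\widetilde\sigma(a)-\underline\sigma)\nabla B\bigr)-u\cdot\nabla B+B\cdot\nabla u,
\]
then applies the maximal-regularity estimate for the heat/Stokes semigroup in $\widetilde L_t^\infty(\dot B_{p,1}^{-1+3/p})\cap L^1_t(\dot B_{p,1}^{1+3/p})$. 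The pressure is handled by taking the divergence of the momentum equation, which yields $\diverg((1+a)\nabla\Pi)=\diverg F$ with $F=-u\cdot\nabla u+(1+a)B\cdot\nabla B+a\Delta u$, and then invoking the new elliptic estimate $\|\nabla\Pi\|_{\dot B_{p,2}^{3/p-3/2}}\le C(1+\|a\|_{\dot B_{q,1}^{3/q}})\|F\|_{\dot B_{p,1}^{3/p-3/2}}$ together with the companion bound in $\dot B_{p,1}^{-1+3/p}$. All the products appearing here ($a\Delta u$, $(1+a)B\cdot\nabla B$, $B\cdot\nabla u$, $u\cdot\nabla u$) are controlled by Bony's paraproduct decomposition and the product laws in Besov spaces, the constraint $\tfrac12-\tfrac1p\le\tfrac1q\le\tfrac1p+\tfrac13$ being precisely what places them in the spaces required above; compactness then lets one pass to the limit and obtain a local solution, while uniqueness for $p\in[3,4]$, $q\in[1,2]$, $a_0\in B_{q,1}^{3/q}$ follows by estimating the differences $(\delta a,\delta u,\delta B,\nabla\delta\Pi)$ one derivative below the existence space (transport estimate with loss for $\delta a$, parabolic estimates for $\delta u,\delta B$, the new pressure estimate for $\nabla\delta\Pi$) and Gronwall's lemma.

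Third, for the global statement, where $\widetilde\sigma(a)$ is a positive constant and the $B$-equation becomes a plain heat equation with source $-u\cdot\nabla B+B\cdot\nabla u$, I would close a nonlinear inequality for
\[
X(t):=\|u\|_{\widetilde L_t^\infty(\dot B_{p,1}^{-1+3/p})}+\|B\|_{\widetilde L_t^\infty(\dot B_{p,1}^{-1+3/p})}+\|(u,B)\|_{L^1_t(\dot B_{p,1}^{1+3/p})}+\|\nabla\Pi\|_{L^1_t(\dot B_{p,1}^{-1+3/p})}
\]
by a continuity argument: under the smallness of $\|u_0\|_{\dot B_{p,1}^{-1+3/p}}+\|B_0\|_{\dot B_{p,1}^{-1+3/p}}$ one shows $X(t)$ remains small as long as $\int_0^t\|\nabla u\|_{L^\infty}$ is controlled, and the latter is controlled through the basic MHD energy identity $\tfrac12\tfrac{d}{dt}\bigl(\|\sqrt\rho\,u\|_{L^2}^2+\|B\|_{L^2}^2\bigr)+\|\nabla u\|_{L^2}^2+\underline\sigma\|\nabla B\|_{L^2}^2=0$, which gives $\int_0^t\|\nabla u\|_{L^2}^2\le C$ and, after interpolation, a bound growing like $Ct^{1/2}$; substituting this into the transport estimate for $a$ yields the double-exponential factor $\exp\{C\exp(Ct^{1/2})\}$ in the final bound. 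I expect the main obstacle to be the pressure term together with its new magnetic contribution: one must verify that $(1+a)B\cdot\nabla B$ (and $a\Delta u$) lies in $\dot B_{p,1}^{3/p-3/2}$ with the right dependence on $\|a\|_{\dot B_{q,1}^{3/q}}$, so that the new elliptic estimate of the paper applies, and this is exactly where the restriction $p\in(1,\frac{5+\sqrt{17}}{2})$ and the condition on $q$ are genuinely used; the rest is a lengthy but essentially routine adaptation of the proof of Theorem~\ref{zhuyaodingli1} with $B$ carried along.
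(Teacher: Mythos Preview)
Your local-existence sketch is broadly in the spirit of the paper, though the way you ``freeze the leading coefficients'' is not quite what is done there: writing $\partial_t u-\Delta u+\nabla\Pi=a\Delta u-a\nabla\Pi+\cdots$ moves the full term $a\Delta u$ to the right-hand side, and since no smallness is assumed on $\|a\|_{\dot B_{q,1}^{3/q}}$ you cannot absorb it. The paper instead keeps $\diverg((1+\dot S_m a)\nabla u)$ on the left and only moves $(a-\dot S_m a)\Delta u$ to the right, so that absorption requires only the smallness of the high-frequency tail $\|a-\dot S_m a\|_{\dot B_{q,1}^{3/q}}$, which is achieved by choosing $m$ large and $T$ small (Propositions~\ref{suduchangxianxing} and~\ref{monimingti}). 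Your uniqueness outline is close to the paper's, which works in $B_{2,1}^{-1/2}$ for the differences and uses Proposition~\ref{weiyixingmingti} together with an $H^{-1/2-\alpha}$ pressure estimate borrowed from \cite{abidi2013}.

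The genuine gap is in the global argument. You propose to control $\int_0^t\|\nabla u\|_{L^\infty}$ via the basic energy identity $\tfrac12\tfrac{d}{dt}(\|\sqrt\rho\,u\|_{L^2}^2+\|B\|_{L^2}^2)+\|\nabla u\|_{L^2}^2+\underline\sigma\|\nabla B\|_{L^2}^2=0$, but for $p\ge 3$ the data $u_0,B_0\in\dot B_{p,1}^{-1+3/p}$ are \emph{not} in $L^2$, so this identity is not available on $(u,B)$ itself; and even if it were, $\int_0^t\|\nabla u\|_{L^2}^2\le C$ alone does not give $L^1_tL^\infty$ control of $\nabla u$. The paper circumvents this by a perturbative splitting: it first uses a smoothing effect (Section~5.2) to show that the local solution gains regularity at any positive time $t_1$, then writes $u=v+w$, $B=h+b$ where $(v,h)$ solves the \emph{homogeneous} MHD system with small data $(u(t_1),B(t_1))$ (hence global and controlled by Proposition~\ref{Q3}), while $(w,b)$ starts from zero and therefore lies in $L^2$. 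One then runs a hierarchy of genuine energy estimates on $(w,b)$ --- $L^2$ (Proposition~\ref{Q8}), then $H^1$ (Proposition~\ref{Q18}), then $H^2$ (Proposition~\ref{Q30}) --- to obtain $\|\nabla u\|_{L^1([t_1,t];L^\infty)}\lesssim 1+t^{1/2}$, which feeds into the transport estimate for $a$ and produces the double exponential. Your direct continuity argument on $X(t)$ misses both the $v/w$ decomposition and the $L^2\!\to\!H^1\!\to\!H^2$ bootstrap, without which the $t^{1/2}$ growth of the Lipschitz norm cannot be justified.
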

\begin{remark}
When the magnetic $B=0,$  Theorem \ref{zhuyaodingli}  coincides with Theorem \ref{zhuyaodingli1}. Thus, in the following, we only take the a  priori estimate for
the system \eqref{mhdmoxing}, the local existence, uniqueness, and the global solution under the small initial data of Theorem \ref{zhuyaodingli1} will be proved together with Theorem \ref{zhuyaodingli}.
\end{remark}
\begin{remark}
By the embedding relation, in what follows we only concern the case $p\in[3,\frac{5+\sqrt{17}}{2})$, the case where $1<p<3$ being easier.
\end{remark}

The paper is organized as follows.
In the second section, we shall first collect some basic facts on Littlewood-Paley
theory  and various product laws and commutator's estimates in Besov spaces;
then present
the estimates to the free transport equation and heat equation.
In Section 3, we use a new commutator  of integral type to give a new elliptic estimates with the nonconstant coefficients.
In Section 4, we apply the
  elliptic estimates with the nonconstant coefficients  to get the linear estimates of the inhomogeneous Navier-Stokes-type equations and MHD equations respectively. With these estimates in hand, we shall proved the local well-posedness part of Theorem \ref{zhuyaodingli} in Section 5. In Section 6, we present the proof of the global existence part of Theorem \ref{zhuyaodingli}.

Let us complete this section by describing the notations which will be used in the sequel.
\noindent{\rm\bf Notations: }
For two operators $A$ and $B$, we denote $[A,B]=AB-BA$, the commutator between $A$ and $B$.
The letter $C$ stands for a generic constant whose meaning is clear from the context.
We  write $a\lesssim b$ instead of $a\leq Cb$.
Given a Banach space $X$, we shall denote by$\langle a,b\rangle$ the $L^2(\R^2)$ inner product of $a$ and $b$, and $\|(a,b)\|_{X}=\|a\|_{X}+\|b\|_{X}$.

For $X$ a Banach space and $I$ an interval of $\mathbb{R}$, we denote by $C(I;X)$ the set of continuous functions on $I$ with values in $X$,
and by $C_{b}(I;X)$ the subset of bounded functions of $C(I;X)$.
For $q\in [1,+\infty]$, $L^q(I;X)$ stands for the set of measurable functions on $I$ with values in $X$, such that $t\mapsto\|f(t)\|_{X}$ belongs to $L^q(I)$.
For short, we  write $L_T^q(X)$ instead of $L^q((0,T);X)$. We always let $(d_j)_{j\in\mathbb{Z}}$ be a
generic element of $l^1(\mathbb{Z})$  so that $\sum_{j\in\mathbb{Z}}d_j=1$, and $(c_{j,r})_{j\in\Z}$ to be a generic element of $l^r(\Z)$ so that $c_{j,r}\ge0$ and $\sum_{j\in\Z}c_{j,r}^r=1$.

\section{Preliminaries}
Let  $(\chi,\phi)$  be two  smooth radial functions,
$0\leq  (\chi,\phi) \leq 1,$
such that $\chi$  is supported in the ball
$\mathcal{B}=\{\xi \in \R^{3}, |\xi|\leq \frac{4}{3}\}$ and $\varphi$  is
supported in the ring
 $\mathcal{C}=\{\xi \in \R^{3}, \frac34\leq |\xi|\leq \frac{8}{3}\}$. Moreover, there hold
$$
 \forall\,\xi\in\R^3 \setminus \{0\},\quad \quad\sum_{j \in \mathbb{Z}} \varphi(2^{-j}\xi)=1,
$$
$$
 \forall\,\xi\in\R^3,\quad \quad\chi(\xi)+\sum_{j \ge0} \varphi(2^{-j}\xi)=1.
$$
Let $h=\mathcal {F}^{-1}\varphi$ and $\widetilde{h}=\mathcal {F}^{-1}\chi$, the inhomogeneous dyadic blocks ${\Delta}_j$ are defined as follows:
\begin{align*}
if \quad&j\le -2, \quad {\Delta}_j f=0,\\
if \quad &j=-1, \quad{\Delta}_j f={\Delta}_{-1} f=\int_{\R^3}\widetilde{h}(y)f(x-y)dy,\\
if \quad &j\ge0, \quad{\Delta}_j f=2^{3j}\int_{\R^3}h(2^{j}y)f(x-y)dy.
\end{align*}
 The inhomogeneous low-frequency cut-off operator $S_j$ is defined by
 $$S_j f=\sum_{j'\le j-1}{\Delta}_{j'}f.$$
For $j \in Z,$ the homogeneous dyadic blocks $\dot{\Delta}_j $ and the homogeneous low-frequency
cut-off operator
$\dot{S}_j$ are defined as follows:
$$\dot{\Delta}_jf=\varphi(2^{-j}D)f=2^{3j}\int_{\R^3}h(2^{j}y)f(x-y)dy,$$
$$\dot{S}_jf=\chi(2^{-j}D)f=2^{3j}\int_{\R^3}\widetilde{h}(2^{j}y)f(x-y)dy.$$
Denote by $\mathscr{S}_h^{'}(\R^3)$ the space of tempered distributions $f$ such that
$$
\lim_{j\rightarrow -\infty}\dot{S}_j f=0\ \ in\ \ \mathscr{S}^{'}(\R^3).
$$
Then we have the formal decomposition
$$
f=\sum_{ j\in \mathbb{Z} }\dot{ \Delta }_{j}f,\ \ \  \forall f\in\mathscr{S}_h^{'}(\mathbb{R}^3).
$$

 Now we recall the definition of homogeneous Besov spaces.
 \begin{definition}
 Let $(p,r) \in [1,+\infty]^2, s \in \R$ and $u\in \mathcal{S}'_h(\R^3)$, which means that $u\in \mathcal{S}'(\R^3)$ and $\lim_{j\to -\infty}\|\dot{S}_ju\|_{L^\infty}=0$ (see Definition 1.26 of \cite{bcd}); we set
$$\|u\|_{\dot{B}_{p,r}^s}\triangleq(2^{js}\|\dot{\Delta}_ju\|_{L^p})_{l^r}.$$

\end{definition}
$\bullet$ For $s<\frac3p$ (or $s=\frac3p$ if $r=1$), we define $\dot{B}_{p,r}^s(\R^3)\triangleq\{u\in \mathcal{S}'_h(\R^3)\mid\|u\|_{\dot{B}_{p,r}^s}<\infty\}$.

$\bullet$ If $k\in \N$ and $\frac3p+k\le s<\frac3p+k+1$ (or $s=\frac3p+k+1$ if $r=1$), then $\dot{B}_{p,r}^s(\R^3)$ is defined as the subset of distributions $u\in \mathcal{S}'_h(\R^3)$ such that $\partial^\beta u\in \dot{B}_{p,r}^{s-k}(\R^3)$.

We also define the inhomogeneous Besov space ${B}_{p,r}^s(\R^3)$ as the space of those
distributions $u \in \mathcal{S}'_h(\R^3)$ such that
$$\|u\|_{{B}_{p,r}^s}\triangleq(2^{js}\|\dot{\Delta}_ju\|_{L^p})_{l^r}<\infty.$$

We are going to define the space of Chemin-Lerner (see \cite{bcd}) in which we will work, which is a
refinement of the space ${L^\lambda_{T}(\dot{B}_{p,r}^s(\R^3))}$.
\begin{definition}(see \cite{bcd})
Let $s\le\frac 3p$ (respectively $s \in \R$), $(r,\lambda,p) \in [1,+\infty]^3$ and $T \in (0,+\infty]$. We define
${\widetilde{L}^\lambda_{T}(\dot{B}_{p,r}^s(\R^3))}$ as the completion of $C([0,T ];\mathcal{S}(\R^3))$ by the norm
$$
\|f\|_{\widetilde{L}^\lambda_{T}(\dot{B}_{p,r}^s)}=\left\{\sum_{q\in \Z}2^{rqs}
\left(\int_0^T\|\dot{\Delta}_qf(t)\|_{L^p}^\lambda dt\right)^{\frac r\lambda}\right\}^{\frac 1r}<\infty,
$$
with the usual change if $r =\infty$. For short, we just denote this space by ${\widetilde{L}^\lambda_{T}(\dot{B}_{p,r}^s)}$.
\end{definition}
\begin{remark}
It is easy to observe that for $0<s_1<s_2,$ $\theta\in[0,1]$, $p,r,\lambda,\lambda_1,\lambda_2\in[1,+\infty]$, we have the following interpolation inequality in the Chemin-Lerner space (see \cite{bcd}):
\begin{align*}
\|u\|_{\widetilde{L}^\lambda_{T}(\dot{B}_{p,r}^s)}\le\|u\|^\theta_{\widetilde{L}^{\lambda_1}_{T}(\dot{B}_{p,r}^{s_1})}
\|u\|^{(1-\theta)}_{\widetilde{L}^{\lambda_2}_{T}(\dot{B}_{p,r}^{s_2})}
\end{align*}
with $\frac{1}{\lambda}=\frac{\theta}{\lambda_1}+\frac{1-\theta}{\lambda_2}$ and $s=\theta s_1+(1-\theta)s_2$.
\end{remark}
Let us emphasize that, according to the Minkowski inequality, we have
\begin{align*}
\|u\|_{\widetilde{L}^\lambda_{T}(\dot{B}_{p,r}^s)}\le\|u\|_{L^\lambda_{T}(\dot{B}_{p,r}^s)}\hspace{0.5cm} \mathrm{if }\hspace{0.2cm}  \lambda\le r,\hspace{0.5cm}
\|u\|_{\widetilde{L}^\lambda_{T}(\dot{B}_{p,r}^s)}\ge\|u\|_{L^\lambda_{T}(\dot{B}_{p,r}^s)},\hspace{0.5cm} \mathrm{if }\hspace{0.2cm}  \lambda\ge r.
\end{align*}
The following Bernstein's lemma will be repeatedly used throughout this paper.
\begin{Lemma}\label{bernstein}(see \cite{bcd})
Let $\mathcal{B}$ be a ball and $\mathcal{C}$ a ring of $\R^3$. A constant $C$ exists so that for any positive real number $\lambda$, any
non-negative integer k, any smooth homogeneous function $\sigma$ of degree m, and any couple of real numbers $(a, b)$ with
$1\le a \le b$, there hold
\begin{align*}
&&\mathrm{Supp} \,\hat{u}\subset\lambda \mathcal{B}\Rightarrow\sup_{|\alpha|=k}
\|\partial^{\alpha}u\|_{L^b}\le C^{k+1}\lambda^{k+3(1/a-1/b)}\|u\|_{L^a},\\
&&\mathrm{Supp} \,\hat{u}\subset\lambda \mathcal{C}\Rightarrow C^{-k-1}\lambda^k\|u\|_{L^a}\le\sup_{|\alpha|=k}
\|\partial^{\alpha}u\|_{L^a}
\le C^{k+1}\lambda^{k}\|u\|_{L^a},\\
&&\mathrm{Supp} \,\hat{u}\subset\lambda \mathcal{C}\Rightarrow \|\sigma(D)u\|_{L^b}\le C_{\sigma,m}\lambda^{m+3(1/a-1/b)}\|u\|_{L^a}.
\end{align*}
\end{Lemma}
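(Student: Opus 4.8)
The plan is to reduce everything to the scale $\lambda=1$ by a dyadic rescaling, and then realize each operator as convolution against a fixed Schwartz kernel, so that the three inequalities become consequences of Young's convolution inequality together with the elementary scaling identities for $L^p$ norms. For the first inequality I would set $u_\lambda(x)=u(\lambda^{-1}x)$: then $\widehat{u_\lambda}$ is supported in $\mathcal{B}$, $\|u_\lambda\|_{L^p}=\lambda^{3/p}\|u\|_{L^p}$, and $\partial^\alpha u_\lambda=\lambda^{-|\alpha|}(\partial^\alpha u)(\lambda^{-1}\cdot)$, so it suffices to treat $\lambda=1$. Choosing $\widetilde{\chi}\in C_c^\infty(\R^3)$ with $\widetilde{\chi}\equiv1$ on $\mathcal{B}$, the identity $\widetilde{\chi}\,\widehat{u}=\widehat{u}$ gives $\partial^\alpha u=\mathcal{F}^{-1}\big((i\xi)^\alpha\widetilde{\chi}\big)*u$, and Young's inequality with $1+\tfrac1b=\tfrac1a+\tfrac1c$ yields
\[
\|\partial^\alpha u\|_{L^b}\le\big\|\mathcal{F}^{-1}\big((i\xi)^\alpha\widetilde{\chi}\big)\big\|_{L^c}\,\|u\|_{L^a};
\]
undoing the rescaling then reproduces the power $\lambda^{k+3(1/a-1/b)}$. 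The one nontrivial point is that $\big\|\mathcal{F}^{-1}((i\xi)^\alpha\widetilde{\chi})\big\|_{L^c}\le C^{k+1}$ uniformly for $|\alpha|=k$: one bounds this $L^c$ norm by finitely many Schwartz seminorms of $(i\xi)^\alpha\widetilde{\chi}$, and, since $\widetilde{\chi}$ is fixed with support in a fixed ball, these seminorms grow at most geometrically in $k$.

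For the ring estimates, the upper bound $\sup_{|\alpha|=k}\|\partial^\alpha u\|_{L^a}\le C^{k+1}\lambda^k\|u\|_{L^a}$ is simply the case $a=b$ of the previous inequality applied with $\mathcal{B}$ the smallest ball containing $\mathcal{C}$. For the lower bound I would use the multinomial identity $|\xi|^{2k}=(\xi_1^2+\xi_2^2+\xi_3^2)^k=\sum_{|\alpha|=k}\binom{k}{\alpha}\xi^{2\alpha}$: on $\mathcal{C}$ one has $|\xi|\ge\tfrac34$, so dividing by $|\xi|^{2k}$ and inserting $\phi\in C_c^\infty(\R^3\setminus\{0\})$ with $\phi\equiv1$ on $\mathcal{C}$ produces, at scale $\lambda=1$, the representation $u=\sum_{|\alpha|=k}g_\alpha*\partial^\alpha u$ with $g_\alpha=\mathcal{F}^{-1}\big(\binom{k}{\alpha}(-i)^k\,\xi^\alpha|\xi|^{-2k}\phi(\xi)\big)$. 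Young's inequality gives $\|u\|_{L^a}\le\big(\sum_{|\alpha|=k}\|g_\alpha\|_{L^1}\big)\sup_{|\alpha|=k}\|\partial^\alpha u\|_{L^a}$, and rescaling restores $\lambda^{-k}$; here the number of multi-indices of length $k$ is $\binom{k+2}{2}$ (polynomial in $k$), each $\binom{k}{\alpha}\le3^k$, and each $\|g_\alpha\|_{L^1}$ is controlled, through finitely many derivatives of $\xi^\alpha|\xi|^{-2k}\phi$, by a quantity growing at most geometrically in $k$, so the product is still $\le C^{k+1}$.

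For the third inequality, since $\sigma$ is smooth on $\R^3\setminus\{0\}$ and homogeneous of degree $m$, I would pick $\phi\in C_c^\infty(\R^3\setminus\{0\})$ with $\phi\equiv1$ on $\mathcal{C}$; then $\sigma(\xi)\phi(\xi/\lambda)=\lambda^m(\sigma\phi)(\xi/\lambda)$ with $\sigma\phi\in C_c^\infty(\R^3)$, so on functions with $\widehat{u}$ supported in $\lambda\mathcal{C}$ one has $\sigma(D)u=\lambda^m K_\lambda*u$ where $K=\mathcal{F}^{-1}(\sigma\phi)\in\mathcal{S}(\R^3)$ and $K_\lambda(x)=\lambda^3K(\lambda x)$. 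Young's inequality with $1+\tfrac1b=\tfrac1a+\tfrac1c$ then gives $\|\sigma(D)u\|_{L^b}\le\lambda^m\|K_\lambda\|_{L^c}\|u\|_{L^a}=\lambda^{m+3(1/a-1/b)}\|K\|_{L^c}\|u\|_{L^a}$, so one may take $C_{\sigma,m}=\|\mathcal{F}^{-1}(\sigma\phi)\|_{L^c}$, with no $k$-dependent bookkeeping at all.

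The hard part, as indicated, is confined to the first two items: showing that differentiating $k$ times costs only a geometric factor, i.e. that the convolution kernels appearing above have $L^1$ (or $L^c$) norms bounded by $C^{k+1}$ uniformly in the multi-index of length $k$. Once that is granted, everything reduces to a routine combination of Young's inequality with the scaling relations.
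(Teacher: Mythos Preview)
Your proposal is correct and follows the standard argument. Note, however, that the paper does not supply its own proof of this lemma: it simply cites \cite{bcd} (Bahouri--Chemin--Danchin), where precisely the strategy you describe---reduction to $\lambda=1$ by rescaling, realization of each operator as convolution with a fixed Schwartz kernel, and Young's inequality---is carried out in full. So there is nothing to compare against; your write-up is essentially a reconstruction of the textbook proof, including the correct handling of the $C^{k+1}$ dependence via Schwartz seminorm bounds and the multinomial inversion for the lower bound on the annulus.
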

In the sequel, we shall frequently use Bony's decomposition from \cite{bony} in the homogeneous context:
\begin{align}\label{bony}
uv=\dot{T}_uv+\dot{T}_vu+\dot{R}(u,v)=\dot{T}_uv+\mathcal{\dot{R}}(u,v),
\end{align}
where
$$\dot{T}_uv\triangleq\sum_{j\in Z}\dot{S}_{j-1}u\dot{\Delta}_jv, \hspace{0.5cm}\dot{R}(u,v)\triangleq\sum_{j\in Z}
\dot{\Delta}_ju\widetilde{\dot{\Delta}}_jv,$$
and
$$ \widetilde{\dot{\Delta}}_jv\triangleq\sum_{|j-j'|\le1}\dot{\Delta}_{j'}v,\hspace{0.5cm} \mathcal{\dot{R}}(u,v)\triangleq\sum_{j\in Z}\dot{S}_{j+2}v\dot{\Delta}_ju.$$

As an application of the above basic facts on the Littlewood-Paley theory, we present the following
different product laws in Besov spaces.
\begin{Lemma}\label{daishu}
Let $1\leq p, q\leq \infty$, $s_1\leq \frac{ 3}{q}$, $s_2\leq 3\min\{\frac1p,\frac1q\}$ and $s_1+s_2>3\max\{0,\frac1p +\frac1q -1\}$. For $\forall (a,b)\in\dot{B}_{q,1}^{s_1}({\mathbb R} ^3)\times\dot{B}_{p,1}^{s_2}({\mathbb R} ^3)$, we have
\begin{align}\label{product inequality}
\|ab\|_{\dot{B}_{p,1}^{s_1+s_2 -\frac{3}{q}}}\lesssim \|a\|_{\dot{B}_{q,1}^{s_1}}\|b\|_{\dot{B}_{p,1}^{s_2}}.
\end{align}
\end{Lemma}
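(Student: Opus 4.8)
The plan is to use Bony's decomposition \eqref{bony} to write $ab = \dot T_a b + \dot T_b a + \dot R(a,b)$ and to estimate each of the three pieces separately in $\dot B_{p,1}^{s_1+s_2-3/q}$. For the paraproduct term $\dot T_a b$, I would use that $\dot S_{j-1}a$ is frequency-localized in a ball of radius $\sim 2^j$, so Bernstein's lemma (Lemma \ref{bernstein}) gives $\|\dot S_{j-1}a\|_{L^\infty} \lesssim \sum_{j'\le j-2} 2^{3j'/q}\|\dot\Delta_{j'}a\|_{L^q}\lesssim 2^{j(3/q-s_1)}\|a\|_{\dot B_{q,1}^{s_1}}$, which is where the hypothesis $s_1 \le 3/q$ is used (to make the geometric series converge; when $s_1 = 3/q$ one needs $r=1$ in the first index, which we have). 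Then $\|\dot S_{j-1}a\,\dot\Delta_j b\|_{L^p}\lesssim 2^{j(3/q-s_1)}\|a\|_{\dot B_{q,1}^{s_1}}\|\dot\Delta_j b\|_{L^p}$, and multiplying by $2^{j(s_1+s_2-3/q)}$ and summing over $j$ (note $\dot S_{j-1}a\,\dot\Delta_j b$ is supported in a dyadic annulus of size $\sim 2^j$, so the blocks are almost orthogonal) yields the bound $\lesssim \|a\|_{\dot B_{q,1}^{s_1}}\|b\|_{\dot B_{p,1}^{s_2}}$ by convolution in $\ell^1$.

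For the second paraproduct $\dot T_b a$, the roles are reversed: $\dot S_{j-1}b$ sits in $L^\infty$ only if $s_2 \le 3/p$, but the sharper requirement $s_2 \le 3\min\{1/p,1/q\}$ is what lets me put $\dot S_{j-1}b\,\dot\Delta_j a$ into $L^p$ via Hölder, writing $\|\dot S_{j-1}b\|_{L^r}\lesssim 2^{j(3/p - 3/r - s_2)}\|b\|_{\dot B_{p,1}^{s_2}}$ with $1/r = 1/p - 1/q \ge 0$ (this is where $q\le \infty$ and the min enters) and $\|\dot\Delta_j a\|_{L^q}$, so that $\|\dot S_{j-1}b\,\dot\Delta_j a\|_{L^p}\lesssim 2^{-j s_2 + 3j/p - 3j/q}\|b\|_{\dot B_{p,1}^{s_2}}2^{-j\cdot 3/q}\cdots$ — more carefully, $\|\dot\Delta_j a\|_{L^q}\lesssim 2^{-j s_1}c_j\|a\|_{\dot B_{q,1}^{s_1}}$, and the powers of $2^j$ combine to exactly $2^{-j(s_1+s_2-3/q)}$ after Bernstein, again giving a convergent $\ell^1$ sum. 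For the remainder $\dot R(a,b) = \sum_j \dot\Delta_j a\,\widetilde{\dot\Delta}_j b$, the summand has frequency support in a ball of radius $\sim 2^j$ (not an annulus), so I first apply $\dot\Delta_k$ and sum over $j \ge k - N_0$; Hölder gives $\|\dot\Delta_j a\,\widetilde{\dot\Delta}_j b\|_{L^p}\lesssim \|\dot\Delta_j a\|_{L^q}\|\widetilde{\dot\Delta}_j b\|_{L^{r}}$ with $1/p = 1/q + 1/r$... wait, that needs $1/r = 1/p-1/q$ which may be negative, so instead I use $\|\dot\Delta_j a\|_{L^\infty}\lesssim 2^{3j/q}\|\dot\Delta_j a\|_{L^q}$ together with $\|\widetilde{\dot\Delta}_j b\|_{L^p}$, using $s_1 \le 3/q$ only qualitatively and absorbing the loss; the condition $s_1 + s_2 > 3\max\{0, 1/p+1/q-1\}$ is precisely what guarantees that after applying $\dot\Delta_k$ and using Bernstein to gain $2^{3k(1/q+1/p-1)_+}$ when lowering integrability, the resulting sum $\sum_{j\ge k}2^{(k-j)(s_1+s_2-3/q - 3(1/q+1/p-1)_+ + 3/q)}$ converges.

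The main obstacle is bookkeeping the exponents correctly in the remainder term, especially splitting into the two cases $1/p + 1/q \le 1$ and $1/p+1/q > 1$: in the first case one can directly use $L^q \times L^{q'}$-type Hölder (with $1/q + 1/q' \ge 1/p$, so no Bernstein loss is needed beyond what $s_1 \le 3/q$ provides), while in the second case one must pay a factor $2^{3k(1/p+1/q-1)}$ when applying $\dot\Delta_k$ to a product that only lives in $L^{t}$ with $1/t = 1/p+1/q-1 < 1/p$, and the strict inequality $s_1+s_2 > 3(1/p+1/q-1)$ is exactly the borderline that makes the geometric series in $j-k$ summable. Everything else is a routine application of Bernstein's lemma, Hölder's inequality, and Young's inequality for convolution on $\ell^1(\mathbb Z)$; I would organize the writeup as three displayed estimates (one per Bony piece) followed by the remark that the three bounds sum to the claimed inequality.
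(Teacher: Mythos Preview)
Your proposal is correct and follows essentially the same route as the paper: Bony's decomposition into $\dot T_a b + \dot T_b a + \dot R(a,b)$, Bernstein plus H\"older on each paraproduct (using $s_1\le 3/q$ for $\dot T_a b$ and $s_2\le 3\min\{1/p,1/q\}$ for $\dot T_b a$), and the case split $1/p+1/q\ge 1$ versus $1/p+1/q<1$ on the remainder, where the strict inequality $s_1+s_2>3\max\{0,1/p+1/q-1\}$ makes the geometric sum in $j-k$ converge. The paper writes out only the case $q>p$ (citing \cite{paicu2012} for $q\le p$), which matches your implicit assumption $1/r=1/p-1/q\ge 0$ in the $\dot T_b a$ estimate; for $q\le p$ one instead uses $\|\dot\Delta_j a\|_{L^p}\lesssim 2^{3j(1/q-1/p)}\|\dot\Delta_j a\|_{L^q}$ and $\|\dot S_{j-1}b\|_{L^\infty}$, which is the same bookkeeping with the roles of Bernstein and H\"older swapped.
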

\begin{proof} This lemma is proved in \cite{paicu2012} in the case when $q\leq p$. We shall only prove \eqref{product inequality} for the case $q>p$.
Applying Bony's decomposition, we have
$$
ab=\dot{T}_{a}b+\dot{T}_{b}a+\dot{R}(a,b).
$$
Then applying Lemma \ref{bernstein}, we get for $s_1\leq\frac{3}{q}$
\begin{align*}
\|\dot{\Delta}_j(\dot{T}_{a}b)\|_{L^p}\lesssim\sum_{|j'-j|\leq 4}\|\dot{S}_{j'-1}a\|_{L^\infty}\|\dot{\Delta}_{j'}b\|_{L^p}
\lesssim d_j2^{-j(s_1+s_2-\frac{3}{q})}\|a\|_{\dot{B}_{q,1}^{s_1}}\|b\|_{\dot{B}_{p,1}^{s_2}},
\end{align*}
and for $s_2\leq\frac{3}{q}$
\begin{align*}
\|\dot{\Delta}_j(\dot{T}_{b}a)\|_{L^p}\lesssim\sum_{|j'-j|\leq 4}\|\dot{\Delta}_{j'}a\|_{L^q}\|\dot{S}_{j'-1}b\|_{L^{\frac{pq}{q-p}}}
\lesssim d_j2^{-j(s_1+s_2-\frac{3}{q})}\|a\|_{\dot{B}_{q,1}^{s_1}}\|b\|_{\dot{B}_{p,1}^{s_2}}.
\end{align*}
If $\frac1p+\frac1q\geq1=\frac1p+\frac{1}{p'}$, we infer
\begin{align*}
\|\dot{\Delta}_j\big(\dot{R}(a,b)\big)\|_{L^p}
\lesssim&2^{3j(1-\frac1p)}\sum_{j'\geq j-3}\|\dot{\Delta}_{j'}a\|_{L^{p'}}\|\widetilde{\dot{\Delta}}_{j'}b\|_{L^{p}}\\
\lesssim&2^{3j(1-\frac1p)}\|a\|_{\dot{B}_{q,1}^{s_1}}\|b\|_{\dot{B}_{p,1}^{s_2}}\sum_{j'\geq j-3}
d_{j'}2^{-j'\big(s_1+s_2-3(\frac1p+\frac1q-1)\big)}\\
\lesssim&d_j2^{-j(s_1+s_2-\frac{3}{q})}\|a\|_{\dot{B}_{q,1}^{s_1}}\|b\|_{\dot{B}_{p,1}^{s_2}},
\end{align*}
for $s_1+s_2>3(\frac1p+\frac1q-1)$. Finally, in the case when $\frac1p+\frac1q\stackrel{\mathrm{def}}{=}\frac1r<1$, noticing that $s_1+s_2>0$, one has
\begin{align*}
\|\dot{\Delta}_j\big(\dot{R}(a,b)\big)\|_{L^p}
\lesssim&2^{3j(\frac1r-\frac1p)}\sum_{j'\geq j-3}\|\dot{\Delta}_{j'}a\|_{L^{q}}\|\widetilde{\dot{\Delta}}_{j'}b\|_{L^{p}}\\
\lesssim&d_j2^{-j(s_1+s_2-\frac{3}{q})}\|a\|_{\dot{B}_{q,1}^{s_1}}\|b\|_{\dot{B}_{p,1}^{s_2}}.
\end{align*}
This completes the proof of the lemma.
\end{proof}

\begin{remark}\label{product law1}
By the above lemma, we can easily verify the following product laws which will be used frequently in this paper.

\noindent(1) Let $1< q\le p$ and $\frac1p+\frac1q\ge\frac13,$ then there hold
\begin{align}\label{}
\|ab\|_{\dot{B}_{p,1}^{\frac{3}{p}}}\lesssim\|a\|_{\dot{B}_{q,1}^{\frac{3}{q}}}\|b\|_{\dot{B}_{p,1}^{\frac{3}{p}}},\quad
\|ab\|_{\dot{B}_{p,1}^{-1+\frac3p}}\lesssim\|a\|_{\dot{B}_{q,1}^{\frac{3}{q}}}\|b\|_{\dot{B}_{p,1}^{-1+\frac3p}}.
\end{align}
(2)  Let $1< q\le p$ and $\frac1p+\frac1q\ge\frac12,$ then there hold
\begin{align}\label{}
\|ab\|_{\dot{B}_{p,1}^{-\frac32+\frac3p}}\lesssim\|a\|_{\dot{B}_{q,1}^{\frac{3}{q}}}\|b\|_{\dot{B}_{p,1}^{-\frac32+\frac3p}},
\quad\|ab\|_{\dot{B}_{p,1}^{-\frac32+\frac3p}}\lesssim\|a\|_{\dot{B}_{q,1}^{-1+\frac{3}{q}}}\|b\|_{\dot{B}_{p,1}^{-\frac12+\frac{3}{p}}}.
\end{align}

\end{remark}

\begin{Lemma}\label{product law2}
 Let $1< q\le p$ and $\frac1p+\frac1q\ge\frac13,$ then there hold
\begin{align}\label{}
\|\mathbb{P}(a\nabla\Pi)\|_{\dot{B}_{p,1}^{\frac{3}{p}}}\lesssim\|\nabla a\|_{\dot{B}_{q,1}^{\frac{3}{q}}}\|\nabla\Pi\|_{\dot{B}_{p,1}^{-1+\frac3p}},
\end{align}
$$
\|\mathbb{P}(a\nabla\Pi)\|_{L_t^1(\dot{B}_{p,1}^{-1+\frac3p})}
\lesssim\|a\|_{L_t^\infty(\dot{B}_{q,1}^{\frac{3}{q}+\frac12})}\|\nabla\Pi\|_{L_t^1(\dot{B}_{p,2}^{-\frac32+\frac3p})}.
$$
\end{Lemma}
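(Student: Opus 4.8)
The plan is to prove both estimates by combining Bony's decomposition \eqref{bony} with the boundedness of the Leray projector $\mathbb{P}$ on homogeneous Besov spaces $\dot{B}^s_{p,1}$ (which follows since $\mathbb{P}$ is a $0$-order Fourier multiplier and $1<p<\infty$), so that it suffices to estimate $a\nabla\Pi$ itself in the relevant space. For the first inequality, write $a\nabla\Pi=\dot{T}_a\nabla\Pi+\dot{T}_{\nabla\Pi}a+\dot{R}(a,\nabla\Pi)$. The key point is that $\nabla\Pi\in\dot{B}^{-1+\frac3p}_{p,1}$ is a \emph{low}-regularity object, while $\nabla a\in\dot{B}^{\frac3q}_{q,1}$; I would write $a$ in all paraproduct and remainder terms in terms of $\nabla a$ using that $\dot\Delta_j a$ and $\dot S_j a$ are controlled by $2^{-j}\|\dot\Delta_j\nabla a\|_{L^q}$ (up to the usual spectral-localization argument, or equivalently using $\dot{B}^{\frac3q+1}_{q,1}\simeq\{a:\nabla a\in\dot{B}^{\frac3q}_{q,1}\}$). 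Then the three pieces are estimated exactly as in the proof of Lemma \ref{daishu}: for $\dot{T}_a\nabla\Pi$ use $\dot S_{j'-1}a\in L^\infty$ with $\|\dot S_{j'-1}a\|_{L^\infty}\lesssim\|a\|_{\dot{B}^{\frac3q}_{q,1}}\lesssim\|\nabla a\|_{\dot{B}^{\frac3q-1}_{q,1}}$; for $\dot{T}_{\nabla\Pi}a$ and $\dot{R}$ use Bernstein together with the condition $\frac1p+\frac1q\ge\frac13$, which is precisely what guarantees the remainder sums converge (the regularity exponent coming out of $\dot R$ is $\frac3q+(-1+\frac3p)-3(\frac1p+\frac1q-1)^+$, and one checks $\frac3q-1+\big(\frac3p\big)>0$ under the hypothesis). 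Summing the $l^1$ bounds gives $\|a\nabla\Pi\|_{\dot{B}^{\frac3p}_{p,1}}\lesssim\|\nabla a\|_{\dot{B}^{\frac3q}_{q,1}}\|\nabla\Pi\|_{\dot{B}^{-1+\frac3p}_{p,1}}$, and applying $\mathbb{P}$ finishes the first claim.

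For the second inequality I would argue at fixed time $t$ and then integrate. The target regularity is $-1+\frac3p$ for the product, the factor $a$ sits in $\dot{B}^{\frac3q+\frac12}_{q,1}$, and $\nabla\Pi$ sits in $\dot{B}^{-\frac32+\frac3p}_{p,2}$. Since the summation index is $1$ on $a$ and on the output but $2$ on $\nabla\Pi$, I would use $\ell^2\cdot\ell^1\hookrightarrow\ell^1$ in the Littlewood–Paley sums (Hölder in the frequency variable) rather than the naive $\ell^1$ estimate of Lemma \ref{daishu}; that is what allows the weaker $r=2$ summability on the pressure factor. Concretely, apply Bony's decomposition once more: $\dot{T}_a\nabla\Pi$ is handled by $\|\dot S_{j'-1}a\|_{L^\infty}\lesssim\|a\|_{\dot{B}^{\frac3q}_{q,1}}\lesssim\|a\|_{\dot{B}^{\frac3q+\frac12}_{q,1}}$ (embedding, with room to spare) times the $\ell^2$ norm of $2^{j'(-\frac32+\frac3p)}\|\dot\Delta_{j'}\nabla\Pi\|_{L^p}$, which reproduces a factor of $\|\nabla\Pi\|_{\dot{B}^{-\frac32+\frac3p}_{p,2}}$ and a net regularity gain of $\frac12>0$ absorbed into the $d_j$; $\dot{T}_{\nabla\Pi}a$ and $\dot{R}(a,\nabla\Pi)$ are treated by Bernstein exactly as before, the condition $\frac1p+\frac1q\ge\frac12$ being exactly what makes the remainder term summable (one needs $\frac3q+\frac12+(-\frac32+\frac3p)-3(\frac1p+\frac1q-1)^+>0$, i.e. $\frac3q+\frac3p-1>0$ when $\frac1p+\frac1q<1$ and automatically otherwise). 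Taking the $L^1_t$ norm and pulling the time integral through (using that the $a$-factor is in $L^\infty_t$ and the $\nabla\Pi$-factor in $L^1_t$) yields the stated bound, and $\mathbb{P}$ is again harmless.

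The main obstacle I anticipate is bookkeeping the borderline exponents in the remainder term $\dot{R}(a,\nabla\Pi)$: one must split into the cases $\frac1p+\frac1q\ge1$ and $\frac1p+\frac1q<1$ as in Lemma \ref{daishu}, verify strict positivity of the resulting regularity index in each case under the hypothesis $\frac1p+\frac1q\ge\frac13$ (resp. $\ge\frac12$), and make sure the $\ell^2$-versus-$\ell^1$ summation is used consistently so the $\dot{B}_{p,2}$ norm (not $\dot{B}_{p,1}$) appears on the right of the second estimate. A minor additional point is justifying the identification of $a\in\dot{B}^{\frac3q+1}_{q,1}$ with $\nabla a\in\dot{B}^{\frac3q}_{q,1}$ used in the first inequality; this is standard (Definition of higher-order Besov spaces in the excerpt), but worth stating explicitly so the gradient genuinely lands on $a$. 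Everything else is a direct transcription of the Lemma \ref{daishu} computation with the exponents shifted.
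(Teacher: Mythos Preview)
Your plan has a genuine gap in the treatment of the paraproduct $\dot T_a\nabla\Pi$. You propose to use only the boundedness of $\mathbb{P}$ and estimate $a\nabla\Pi$ directly in the target space, but this is not possible: for the first inequality you would need
\[
\|\dot T_a\nabla\Pi\|_{\dot B^{3/p}_{p,1}}\lesssim\|a\|_{\dot B^{3/q+1}_{q,1}}\|\nabla\Pi\|_{\dot B^{-1+3/p}_{p,1}},
\]
which is false. Indeed, the low-frequency cutoff $\dot S_{j'-1}a$ never sees the extra derivative carried by $\|a\|_{\dot B^{3/q+1}_{q,1}}$: one has only $\|\dot S_{j'-1}a\|_{L^\infty}\lesssim\|a\|_{\dot B^{3/q}_{q,1}}$ with no decay in $j'$, so the paraproduct lands in $\dot B^{-1+3/p}_{p,1}$, one full derivative short. (Your own line ``$\|\dot S_{j'-1}a\|_{L^\infty}\lesssim\|a\|_{\dot B^{3/q}_{q,1}}\lesssim\|\nabla a\|_{\dot B^{3/q-1}_{q,1}}$'' already shows you are bounding by the wrong norm---the right-hand side of the lemma involves $\|\nabla a\|_{\dot B^{3/q}_{q,1}}$, not $\|\nabla a\|_{\dot B^{3/q-1}_{q,1}}$.) The same obstruction occurs for the second inequality, where you would need a spurious $2^{-j/2}$ decay of $\|\dot S_{j-1}a\|_{L^\infty}$.

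The paper's proof avoids this entirely by the algebraic identity
\[
\mathbb{P}(a\nabla\Pi)=\mathbb{P}\bigl(\nabla(a\Pi)-(\nabla a)\Pi\bigr)=-\mathbb{P}\bigl((\nabla a)\Pi\bigr),
\]
using that $\mathbb{P}$ annihilates gradients. This transfers the derivative from the low-regularity factor $\Pi$ to the high-regularity factor $a$, after which Lemma~\ref{daishu} applies directly (with $s_1=\frac3q$ or $\frac3q-\frac12$ on $\nabla a$, and $s_2=\frac3p$ or $-\frac12+\frac3p$ on $\Pi$). So $\mathbb{P}$ is \emph{not} harmless here; its specific structure is the whole point. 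Your paraproduct analysis for $\dot T_{\nabla\Pi}a$ and $\dot R(a,\nabla\Pi)$ would go through, but the missing ingredient for $\dot T_a\nabla\Pi$ is exactly this identity.
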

\begin{proof} We only treat the second inequality, since the proof of  the first inequality is similar.
For $\mathbb{P}(a\nabla\Pi)=\mathbb{P}(\nabla(a\Pi)-\nabla a \Pi)=\mathbb{P}(\nabla a\Pi)$, thus by Lemma \ref{daishu}, we have
\begin{align}\label{}
\|\mathbb{P}(a\nabla\Pi)\|_{L_t^1(\dot{B}_{p,1}^{-1+\frac3p})}
&\lesssim\|\mathbb{P}(\nabla a\Pi)\|_{L_t^1(\dot{B}_{p,1}^{-1+\frac3p})}\nonumber\\
&\lesssim\|\nabla a\|_{\widetilde{L}_t^\infty(\dot{B}_{q,1}^{\frac{3}{q}-\frac12})} \|\nabla\Pi\|_{L_t^1(\dot{B}_{p,2}^{-\frac32+\frac3p})},
\end{align}
from which the desired inequality follows.
\end{proof}

Let us also recall the following commutator estimate from \cite{bcd}.
\begin{Lemma} (see \cite{bcd})\label{jiaohuanzi}
Let $1\leq p,q\leq\infty$, $-3\min\{\frac1p,1-\frac{1}{q}\}<s\leq1+3\min\{\frac1p,\frac1q\}$, $\nabla a\in\dot{B}_{p,1}^{\frac{3}{p}}(\R^3)$
and $b\in\dot{B}_{q,1}^{s-1}(\R^3)$. Then there holds
$$
\|[\dot{\Delta}_j,a]b\|_{L^q}\lesssim d_j 2^{-js}\|\nabla a\|_{\dot{B}_{p,1}^{\frac{3}{p}}}\|b\|_{\dot{B}_{q,1}^{s-1}}.
$$
\end{Lemma}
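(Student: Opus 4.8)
The plan is to reduce the estimate to the classical commutator machinery attached to Bony's decomposition \eqref{bony}. First I would write $[\dot\Delta_j,a]b=\dot\Delta_j(ab)-a\,\dot\Delta_j b$ and expand both products by \eqref{bony}, which rearranges the commutator into
\begin{align*}
[\dot\Delta_j,a]b=[\dot\Delta_j,\dot T_a]b+\big(\dot\Delta_j\dot T_b a-\dot T_{\dot\Delta_j b}a\big)+\big(\dot\Delta_j\dot R(a,b)-\dot R(a,\dot\Delta_j b)\big).
\end{align*}
Only the first term genuinely requires the commutator structure; the remaining four are plain products whose frequency supports confine the dyadic sums either to a bounded band around $j$ (the $\dot T_b a$ and $\dot\Delta_j\dot R$ pieces) or to a single localisation near $j$ (the $\dot T_{\dot\Delta_j b}a$ and $\dot R(a,\dot\Delta_j b)$ pieces), so they can be handled by Lemma \ref{bernstein} and Hölder's inequality alone.

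For the commutator term I would use $[\dot\Delta_j,\dot T_a]b=\sum_{|j'-j|\le 4}[\dot\Delta_j,\dot S_{j'-1}a]\dot\Delta_{j'}b$ and the kernel representation $\dot\Delta_j f=2^{3j}h(2^j\cdot)\ast f$, $h=\mathcal F^{-1}\varphi$. Inserting the first-order expansion $\dot S_{j'-1}a(x-y)-\dot S_{j'-1}a(x)=-\int_0^1 y\cdot\nabla\dot S_{j'-1}a(x-\tau y)\,d\tau$ and using $\big\||y|\,2^{3j}h(2^jy)\big\|_{L^1_y}\lesssim 2^{-j}$ (the kernel decays fast since $\varphi$ is smooth and compactly supported) gives $\|[\dot\Delta_j,\dot S_{j'-1}a]\dot\Delta_{j'}b\|_{L^q}\lesssim 2^{-j}\|\nabla\dot S_{j'-1}a\|_{L^\infty}\|\dot\Delta_{j'}b\|_{L^q}$. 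Bounding $\|\nabla\dot S_{j'-1}a\|_{L^\infty}$ by $\|\nabla a\|_{\dot B_{p,1}^{3/p}}$ through the embedding $\dot B_{p,1}^{3/p}\hookrightarrow L^\infty$ and summing over the band $|j'-j|\le 4$ yields the claimed bound for this piece, uniformly in $s$.

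For the four remaining terms I would proceed block by block, controlling the blocks of $a$ via $\|\dot\Delta_{j'}a\|_{L^m}\lesssim 2^{-j'}\|\dot\Delta_{j'}\nabla a\|_{L^m}$ and Lemma \ref{bernstein} to move Lebesgue exponents around, the blocks of $b$ via the definition of $\|b\|_{\dot B_{q,1}^{s-1}}$, and distributing exponents by Hölder so the output sits in $L^q$. The conditions on $s$ enter exactly here, as the requirement that the resulting geometric series in the dyadic index be summable: the paraproduct in which $a$ sits at high frequency forces $s\le 1+3\min\{1/p,1/q\}$ (a low-frequency truncation of $b$ must be placed in $L^\infty$), while the remainder terms, after passing to the exponent $q$ by Bernstein's inequality, force $s>-3\min\{1/p,1-1/q\}$, the optimal Hölder split being governed by the relative size of $p$ and the conjugate exponent of $q$. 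I expect this exponent bookkeeping to be the only real---if routine---difficulty, there being no analytic subtlety beyond Bernstein's lemma. Adding the five contributions gives $\|[\dot\Delta_j,a]b\|_{L^q}\lesssim d_j 2^{-js}\|\nabla a\|_{\dot B_{p,1}^{3/p}}\|b\|_{\dot B_{q,1}^{s-1}}$.
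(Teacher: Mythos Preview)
The paper does not supply its own proof of this lemma; it is simply quoted from \cite{bcd} (it is essentially Lemma~2.100 there). Your proposed argument---splitting via Bony's decomposition into the genuine commutator $[\dot\Delta_j,\dot T_a]b$ plus four product terms, handling the first by the first-order Taylor expansion of the convolution kernel and the rest by Bernstein and H\"older with the exponent constraints arising from summability of the dyadic tails---is exactly the standard proof in \cite{bcd}, and it is correct.
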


\begin{Lemma}\label{jiaohuanzi2}
 Let $1<q\le p<6, a\in{\dot{B}_{q,1}^{\frac{3}{q}+\frac12}}(\R^3)$ and $\nabla\Pi\in{\dot{B}_{p,2}^{-\frac32+\frac3p}}(\R^3)$. Then there holds
\begin{align}\label{miaomiao}
\|[\dot{\Delta}_j,a]\nabla\Pi\|_{L^p}
\lesssim2^{-(\frac{3}{p}-1)j}d_j\|a\|_{\dot{B}_{q,1}^{\frac{3}{q}+\frac12}}\|\nabla\Pi\|_{\dot{B}_{p,2}^{-\frac32+\frac3p}}.
\end{align}
\end{Lemma}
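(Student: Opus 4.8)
The plan is to decompose the commutator $[\dot\Delta_j,a]\nabla\Pi$ using Bony's decomposition \eqref{bony}, writing $a\,\nabla\Pi = \dot T_a\nabla\Pi + \dot T_{\nabla\Pi}a + \dot R(a,\nabla\Pi)$, and to treat the contribution of each piece to $\dot\Delta_j(a\nabla\Pi) - a\,\dot\Delta_j\nabla\Pi$ separately. The paraproduct term $\dot T_a\nabla\Pi$ is the only one that genuinely produces a commutator structure: there one exploits the spectral localization so that $[\dot\Delta_j,\dot S_{j'-1}a]$ with $|j'-j|\le 4$ can be rewritten, via the kernel representation of $\dot\Delta_j$, as an integral $\int h_j(y)\big(\dot S_{j'-1}a(x-y)-\dot S_{j'-1}a(x)\big)\dot\Delta_{j'}\nabla\Pi(x-y)\,dy$, and a first-order Taylor expansion gives a gain of $2^{-j}$ together with $\nabla\dot S_{j'-1}a$; estimating $\|\nabla\dot S_{j'-1}a\|_{L^\infty}$ by Bernstein and summing over the low frequencies $j''\le j'-2$ gives a factor controlled by $\|a\|_{\dot B_{q,1}^{3/q+1/2}}$ after paying $2^{j''(\,1/2\,)}$ in the sum (this is where the shift from the usual index $3/q$ to $3/q+1/2$ enters, and where $q<6$ — equivalently the summability of $2^{j''(3/q+1/2-3/q)}=2^{j''/2}$ against the Besov weight — is used), while $\|\dot\Delta_{j'}\nabla\Pi\|_{L^p}$ is summed against the $\ell^2$ weight of $\dot B_{p,2}^{-3/2+3/p}$.

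For the remaining two terms, $\dot T_{\nabla\Pi}a$ and $\dot R(a,\nabla\Pi)$, no cancellation is needed: here one simply bounds $\|\dot\Delta_j(\dot T_{\nabla\Pi}a)\|_{L^p}$ and $\|\dot\Delta_j(\dot R(a,\nabla\Pi))\|_{L^p}$, together with $\|a\,\dot\Delta_j\nabla\Pi\|_{L^p}$ restricted to the corresponding frequency ranges, directly by Hölder and Bernstein, exactly as in the proof of Lemma \ref{daishu} and Remark \ref{product law1}(2). Concretely, the remainder term requires, after applying Bernstein to pass from $L^{r}$ to $L^p$ with $\tfrac1r=\tfrac1p+\tfrac1q$ (or $\tfrac1r=1$), the condition that the resulting index be summable, i.e. $\tfrac3q+\tfrac12 + (-\tfrac32+\tfrac3p) > 3\max\{0,\tfrac1p+\tfrac1q-1\}$, which holds under $q\le p<6$; the $T_{\nabla\Pi}a$ term is handled by putting $a$ in $L^q$ on the localized block and $\nabla\Pi$ in $\dot S_{j'-1}$ in the appropriate $L^{\frac{pq}{q-p}}$ (or $L^\infty$ when $q\le p$ forces no issue). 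In every case one extracts the generic sequence $(d_j)\in\ell^1$ and the common weight $2^{-(3/p-1)j}=2^{-(3/p-1)j}$, which matches $s=3/p-1 = -1+3/p$, noting $-3/2+3/p$ shifted up by $1/2$ from the Chemin–Lerner bookkeeping.

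The main obstacle is the paraproduct piece $\dot T_a\nabla\Pi$: one must verify that the Taylor-expansion gain of one derivative on the low-frequency factor is exactly compensated by requiring $a\in\dot B_{q,1}^{3/q+1/2}$ rather than $\dot B_{q,1}^{3/q+1}$ — that is, the commutator buys a full derivative but we only spend half of it, the other half being absorbed by the extra $\ell^2$-to-$\ell^1$ room coming from $\nabla\Pi\in\dot B_{p,2}$ instead of $\dot B_{p,1}$. Making this half-derivative accounting rigorous — in particular summing $\sum_{j''\le j'-2}2^{j''}\|\dot\Delta_{j''}a\|_{L^\infty}\cdot 2^{-j}$ against $\|\dot\Delta_{j'}\nabla\Pi\|_{L^p}$ and recognizing the product of an $\ell^1$ and an $\ell^2$ sequence lands in $\ell^1$ by Cauchy–Schwarz with a geometric factor — and checking that the constraint $p<6$ (via $q\le p$) is precisely what keeps the low-frequency sum convergent, is the delicate point; the rest is a routine repetition of the arguments already carried out for Lemma \ref{daishu}.
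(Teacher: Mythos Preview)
Your overall strategy matches the paper's proof: decompose via Bony as
$[\dot\Delta_j,a]\nabla\Pi = [\dot\Delta_j,\dot T_a]\nabla\Pi + \dot\Delta_j(\dot T'_{\nabla\Pi}a) - \dot T'_{\dot\Delta_j\nabla\Pi}a$,
use the kernel/Taylor representation on the first piece, and straight H\"older--Bernstein on the last two.

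However, you misplace the role of the hypothesis $p<6$. In the $[\dot\Delta_j,\dot T_a]$ piece the low-frequency sum you write, $\sum_{j''\le j'-2}2^{j''/2}(\cdots)$, is geometric with positive exponent and converges for \emph{every} $p,q$; nothing is used there beyond $a\in\dot B^{3/q+1/2}_{q,1}$. In the paper the constraint $p<6$ enters only in the estimate of $\dot\Delta_j(\dot T'_{\nabla\Pi}a)$ for $p\ge 2$ (cf.\ \eqref{pre3}): after bounding $\|\dot S_{j'+2}\nabla\Pi\|_{L^p}$ via the $\dot B^{-3/2+3/p}_{p,2}$ norm, the remaining sum over $j'\ge j-3$ carries the weight $2^{(1-6/p)j'}$, which is summable precisely when $p<6$. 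Your observation that the remainder forces $\tfrac1p+\tfrac1q>\tfrac13$ (hence $p<6$ via $q\le p$) is the same mechanism in that language, so that part of your sketch is right; the $\dot T_a$ part is not where it lives.

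Your closing paragraph also misstates the bookkeeping. The claim that ``an $\ell^1$ and an $\ell^2$ sequence land in $\ell^1$ by Cauchy--Schwarz'' is false as written (Cauchy--Schwarz gives $\ell^2\cdot\ell^2\hookrightarrow\ell^1$ after summation, and pointwise $\ell^1\cdot\ell^2\subset\ell^2$, not $\ell^1$). In fact the $[\dot\Delta_j,\dot T_a]$ contribution only produces a $c_{j,2}\in\ell^2$ factor, inherited from $\nabla\Pi\in\dot B^{-3/2+3/p}_{p,2}$, exactly as in \eqref{pre1}; no Cauchy--Schwarz step is performed there, and the half-derivative saving you allude to is simply the computation $2^{-j}\cdot\|\nabla\dot S_{j'-1}a\|_{L^\infty}\lesssim 2^{-j}\cdot 2^{j'/2}\|a\|_{\dot B^{3/q+1/2}_{q,1}}$ with $|j'-j|\le 4$.
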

\begin{proof}
Taking advantage of the Bony's decomposition \eqref{bony} , we rewrite the commutator as:
\begin{align}\label{fenjiefangji}
[\dot{\Delta}_j,a]\nabla\Pi=\dot{\Delta}_j(a\nabla\Pi)-a\dot{\Delta}_j\nabla\Pi
=[\dot{\Delta}_j,T_a]\nabla\Pi+\dot{\Delta}_j(\dot{T}'_{\nabla\Pi}a)-\dot{T}'_{\dot{\Delta}_j\nabla\Pi}a.
\end{align}
By the definition of Bony's decomposition again, we have
\begin{align*}
[\dot{\Delta}_j,\dot{T}_{a}]\nabla\Pi
=-2^{3j}\sum_{|j'-j|\leq4}\int_{\R^3}h(2^jy)\dot{\Delta}_{j'}\nabla\Pi(x-y)dy\int_0^1y\cdot\nabla\dot{S}_{j'-1}a(x-\tau y)d\tau,
\end{align*}
from which we can get by Lemma \ref{bernstein} and the H$\rm{\ddot{o}}$lder inequality  that
\begin{align}\label{pre1}
\|[\dot{\Delta}_j,\dot{T}_{a}]\nabla\Pi\|_{L^p}
\lesssim&
\sum_{|j'-j|\leq4}\|\nabla\dot{S}_{j'-1}a\|_{L^\infty}\|\dot{\Delta}_{j'}\nabla\Pi\|_{L^p}\nonumber\\
\lesssim&
\sum_{|j'-j|\leq4}(\sum_{k\le j'-2}\|\nabla\dot{\Delta}_{k}a\|_{L^q}2^{\frac{3k}{q}})\|\dot{\Delta}_{j'}\nabla\Pi\|_{L^p}\nonumber\\
\lesssim& c_j2^{-(\frac3p-1)j}\|a\|_{\dot{B}_{q,1}^{\frac{3}{q}+\frac12}}\|\nabla\Pi\|_{\dot{B}_{p,2}^{-\frac32+\frac3p}}.
\end{align}
When $1<q\le p\le2,$ we get from
$$
\dot{\Delta}_j\big(\dot{T}'_{\nabla\Pi}a\big)=\sum_{j'\geq j-3}\dot{\Delta}_j\big(\dot{\Delta}_{j'}a\dot{S}_{j'+2}\nabla\Pi\big)$$
that
\begin{align}\label{pre2}
\|\dot{\Delta}_j\big(\dot{T}'_{\nabla\Pi}a\big)\|_{L^p}\lesssim&2^{3j(1-\frac1p)}\sum_{j'\geq j-3}\|\dot{\Delta}_{j'}a\|_{L^{p}}\|\dot{S}_{j'+2}\nabla\Pi\|_{L^{\frac{p}{p-1}}}\nonumber\\
\lesssim& 2^{3j(1-\frac1p)}\sum_{j'\geq j-3}c_{j',2}2^{3(\frac1q-\frac1p)j'}\|\dot{\Delta}_{j'}a\|_{L^{q}}
(\sum_{k\le j'+1}\|\dot{\Delta}_{k}\nabla\Pi\|_{L^p}2^{(\frac{6}{p}-3)k})
\nonumber\\
\lesssim& 2^{3j(1-\frac1p)}\sum_{j'\geq j-3}c_{j',2}2^{-(\frac12+\frac3p)j'}\|a\|_{\dot{B}_{q,1}^{\frac{3}{q}+\frac12}}
(\sum_{k\le j'+1}c_{k,2}\|\nabla\Pi\|_{\dot{B}_{p,2}^{-\frac32+\frac3p}}2^{(\frac{3}{p}-\frac32)k})
\nonumber\\
\lesssim& d_j2^{-(\frac3p-1)j}\|a\|_{\dot{B}_{q,1}^{\frac{3}{q}+\frac12}}\|\nabla\Pi\|_{\dot{B}_{p,2}^{-\frac32+\frac3p}}.
\end{align}
Similarly, when $1<q\le p\in [2,6),$ we have
\begin{align}\label{pre3}
\|\dot{\Delta}_j\big(\dot{T}'_{\nabla\Pi}a\big)\|_{L^p}\lesssim&2^{\frac{3j}{p}}\sum_{j'\geq j-3}\|\dot{\Delta}_{j'}a\|_{L^{p}}\|\dot{S}_{j'+2}\nabla\Pi\|_{L^{p}}\nonumber\\
\lesssim& 2^{\frac{3j}{p}}\sum_{j'\geq j-3}c_{j',2}2^{3(\frac1q-\frac1p)j'}\|\dot{\Delta}_{j'}a\|_{L^{q}}
(\sum_{k\le j'+1}\|\dot{\Delta}_{k}\nabla\Pi\|_{L^p})
\nonumber\\
\lesssim& 2^{\frac{3j}{p}}\sum_{j'\geq j-3}c_{j',2}2^{-(\frac12+\frac3p)j'}\|a\|_{\dot{B}_{q,1}^{\frac{3}{q}+\frac12}}
(\sum_{k\le j'+1}c_{k,2}\|\nabla\Pi\|_{\dot{B}_{p,2}^{-\frac32+\frac3p}}2^{(\frac{3}{2}-\frac3p)k})
\nonumber\\
\lesssim&2^{\frac{3j}{p}}\sum_{j'\geq j-3}d_{j'}2^{(1-\frac6p)j'}\|a\|_{\dot{B}_{q,1}^{\frac{3}{q}+\frac12}}
\|\nabla\Pi\|_{\dot{B}_{p,2}^{-\frac32+\frac3p}}
\nonumber\\
\lesssim& d_j2^{-(\frac3p-1)j}\|a\|_{\dot{B}_{q,1}^{\frac{3}{q}+\frac12}}\|\nabla\Pi\|_{\dot{B}_{p,2}^{-\frac32+\frac3p}}.
\end{align}
For the last term on the right hand side of (\ref{fenjiefangji}), we write
\begin{align*}
\dot{T}'_{\dot{\Delta}_j\nabla\Pi}a=\sum_{j'\geq j-3}\dot{\Delta}_{j'}a\dot{S}_{j'+2}\dot{\Delta}_{j}\nabla\Pi.
\end{align*}
Thus
\begin{align}\label{pre4}
\|\dot{T}'_{\dot{\Delta}_j\nabla\Pi}a\|_{L^p}\lesssim&\sum_{j'\geq j-3}\|\dot{\Delta}_{j'}a\|_{L^{\infty}}\|\dot{S}_{j'+2}\dot{\Delta}_{j}\nabla\Pi\|_{L^p}\nonumber\\
\lesssim&\sum_{j'\geq j-3}2^{-\frac3qj'}\|\dot{\Delta}_{j'}a\|_{L^{q}}\|\dot{\Delta}_{j}\nabla\Pi\|_{L^p}\nonumber\\
\lesssim&\sum_{j'\geq j-3}c_{j',2}2^{-\frac12j'}\|a\|_{\dot{B}_{q,1}^{\frac{3}{q}+\frac12}}2^{-(\frac{3}{p}-\frac32)j}c_{j,2}\|\nabla\Pi\|_{\dot{B}_{p,2}^{-\frac32+\frac3p}}\nonumber\\
\lesssim& d_j2^{-(\frac3p-1)j}\|a\|_{\dot{B}_{q,1}^{\frac{3}{q}+\frac12}}\|\nabla\Pi\|_{\dot{B}_{p,2}^{-\frac32+\frac3p}}.
\end{align}
Combining with the above estimates \eqref{fenjiefangji}--\eqref{pre4}, we can complete the proof  of \eqref{miaomiao}.
\end{proof}

\begin{proposition}  (see \cite{abidi2012})  \label{shuyun}
Let $1\le q\le  p\le 6$ with $\frac1q-\frac1p\le\frac13,$ and $m\in\Z, a_0\in \dot{B}_{q,1}^{\frac{3}{q}}$, $\nabla u\in L_T^1({{\dot{B}}_{p,1}^{\frac{3}{p}}})$ with $div u=0$, and $a\in C([0, T ]; \dot{B}_{q,1}^{\frac{3}{q}})$ such that $(a,u)$ solves
\begin{equation*}
   \left\{
 \begin{array}{ll}
 \partial_t a +  u \cdot\nabla a =0, \\
a(x,0)= a_0.
 \end{array} \right.
  \end{equation*}
Then there hold for $\forall\, t\le T$
\begin{align}\label{3.2}
\|a\|_{\widetilde{L}^\infty_t(\dot{B}_{q,1}^{\frac{3}{q}})}\le\|a_0\|_{\dot{B}_{q,1}^{\frac{3}{q}}}e^{CU(t)},
\end{align}
\begin{align}\label{3.3}
\|a-\dot{S}_ma\|_{\widetilde{L}^\infty_t(\dot{B}_{q,1}^{\frac{3}{q}})}\le\sum_{q\ge m} 2^{3q/2}\|{\Delta}_qa_0\|_{L^2}+\|a_0\|_{\dot{B}_{q,1}^{\frac{3}{q}}}(e^{CU(t)}-1),
\end{align}
with $U(t)=\|\nabla u\|_{L^1_t(\dot{\dot{B}}_{p,1}^{\frac{3}{p}})}$.

Similar inequality holds for the inhomogeneous Besov norm ${B}_{q,1}^{\frac{3}{q}}(\R^3)$.
\end{proposition}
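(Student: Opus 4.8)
The plan is to run the classical energy method for transport equations in Besov spaces, using the commutator bound of Lemma~\ref{jiaohuanzi} at the key step and then closing by Gr\"onwall.

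For \eqref{3.2} we first localize: applying $\dot\Delta_j$ to $\partial_t a+u\cdot\nabla a=0$ and using that $\dot\Delta_j$ commutes with $\nabla$ gives $\partial_t\dot\Delta_j a+u\cdot\nabla\dot\Delta_j a=-[\dot\Delta_j,u]\cdot\nabla a$. Since $\diverg u=0$, multiplying by $|\dot\Delta_j a|^{q-2}\dot\Delta_j a$ and integrating kills the convection term, and H\"older's inequality yields $\tfrac{d}{dt}\|\dot\Delta_j a\|_{L^q}\le\|[\dot\Delta_j,u]\cdot\nabla a\|_{L^q}$ (the endpoint $q=1$ and the limited smoothness of $a$ being handled by the usual regularization of the sign function and mollification of the equation, justified because $u\cdot\nabla a\in L_T^1(\dot B_{q,1}^{3/q-1})$). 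We then integrate in time, multiply by $2^{3j/q}$, apply Lemma~\ref{jiaohuanzi} with $s=\tfrac3q$ — the hypothesis $\tfrac1q-\tfrac1p\le\tfrac13$ being precisely what makes $\tfrac3q\le1+3\min\{\tfrac1p,\tfrac1q\}$ admissible — and sum over $j\in\Z$ to obtain
\begin{equation*}
\|a(t)\|_{\dot B_{q,1}^{3/q}}\le\|a_0\|_{\dot B_{q,1}^{3/q}}+C\int_0^t\|\nabla u(\tau)\|_{\dot B_{p,1}^{3/p}}\,\|a(\tau)\|_{\dot B_{q,1}^{3/q}}\,d\tau .
\end{equation*}
Gr\"onwall's inequality then gives the pointwise-in-time bound $\|a(t)\|_{\dot B_{q,1}^{3/q}}\le\|a_0\|_{\dot B_{q,1}^{3/q}}e^{CU(t)}$. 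To recover the stronger Chemin--Lerner norm in \eqref{3.2}, we take the supremum over $[0,t]$ in the blockwise inequality \emph{before} summing in $j$: the left-hand side becomes $\|a\|_{\widetilde L_t^\infty(\dot B_{q,1}^{3/q})}$, and on the right we insert the pointwise bound just obtained and integrate the time factor via $CU'(\tau)e^{CU(\tau)}=\tfrac{d}{d\tau}e^{CU(\tau)}$, which closes the estimate since $U(0)=0$.

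For \eqref{3.3}, the point is that $w:=a-\dot S_m a$ solves the same transport equation with initial datum $(\mathrm{Id}-\dot S_m)a_0$ and forcing $[u\cdot\nabla,\dot S_m]a=-\sum_{j\le m-1}[\dot\Delta_j,u]\cdot\nabla a$. Running the argument above for $w$ with the forcing retained inserts an extra term $\int_0^t e^{-CU(\tau)}\|[u\cdot\nabla,\dot S_m]a(\tau)\|_{\dot B_{q,1}^{3/q}}\,d\tau$ inside the exponential; the commutator is controlled by $\|[u\cdot\nabla,\dot S_m]a\|_{\dot B_{q,1}^{3/q}}\lesssim\|\nabla u\|_{\dot B_{p,1}^{3/p}}\|a\|_{\dot B_{q,1}^{3/q}}$, obtained by the same Bony-decomposition analysis underlying Lemmas~\ref{jiaohuanzi} and~\ref{jiaohuanzi2}. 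Combining this with the bound on $\|a(\tau)\|_{\dot B_{q,1}^{3/q}}$ from \eqref{3.2} and carrying out the $e^{CU}$ bookkeeping (in particular $\|(\mathrm{Id}-\dot S_m)a_0\|_{\dot B_{q,1}^{3/q}}e^{CU(t)}\le\|(\mathrm{Id}-\dot S_m)a_0\|_{\dot B_{q,1}^{3/q}}+\|a_0\|_{\dot B_{q,1}^{3/q}}(e^{CU(t)}-1)$), one reaches \eqref{3.3} after rewriting the surviving initial high-frequency contribution in terms of $\sum_{j\ge m}2^{3j/2}\|\dot\Delta_j a_0\|_{L^2}$ via Bernstein's inequality (Lemma~\ref{bernstein}) — the exponent $3/2$ and the $L^2$ reflecting the $L^2$-based energy structure, the conservation $\|a(t)\|_{L^2}=\|a_0\|_{L^2}$ granted by $\diverg u=0$ being what prevents this tail from growing. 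The inhomogeneous version follows identically, since for $j\ge0$ the inhomogeneous and homogeneous blocks agree and the low block is estimated trivially.

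We do not expect a genuine obstacle: this is a by-now-standard transport estimate and every analytic ingredient (the commutator bound, Bernstein, the Chemin--Lerner structure) is already in the preliminaries. The delicate points are purely technical: (i) the rigorous derivation of the blockwise energy identity for a solution merely in $C([0,T];\dot B_{q,1}^{3/q})$; (ii) the commutator estimate for $[u\cdot\nabla,\dot S_m]a$, whose frequency localization is less transparent than that of a single $[\dot\Delta_j,u]\cdot\nabla a$; and (iii) the bookkeeping that turns the raw Gr\"onwall output into the clean $e^{CU(t)}$, respectively $(e^{CU(t)}-1)$, form of \eqref{3.2}--\eqref{3.3} in the Chemin--Lerner norm.
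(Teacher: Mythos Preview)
The paper does not prove this proposition at all; it is quoted from \cite{abidi2012} without argument, so there is no ``paper's own proof'' to compare against here.

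Your treatment of \eqref{3.2} is correct and is exactly the standard argument.

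For \eqref{3.3} your route through the auxiliary equation for $w=a-\dot S_m a$ with forcing $[u\cdot\nabla,\dot S_m]a$ can be made to work, but it is more involved than necessary. The cleaner path --- and the one actually used in \cite{abidi2012} --- is simply to take the blockwise inequality you already derived for \eqref{3.2},
\[
2^{\frac{3j}{q}}\sup_{[0,t]}\|\dot\Delta_j a\|_{L^q}\le 2^{\frac{3j}{q}}\|\dot\Delta_j a_0\|_{L^q}+C\int_0^t d_j(\tau)\,U'(\tau)\,\|a(\tau)\|_{\dot B_{q,1}^{3/q}}\,d\tau,
\]
and sum it only over $j\ge m$ (up to the usual finite overlap near $j=m$, this is equivalent to the $\widetilde L^\infty_t(\dot B_{q,1}^{3/q})$ norm of $a-\dot S_m a$). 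The first term produces the high-frequency tail of $a_0$; in the second one sums $d_j$ over all $j$, inserts the bound from \eqref{3.2}, and integrates $CU'(\tau)e^{CU(\tau)}$ to obtain $\|a_0\|_{\dot B_{q,1}^{3/q}}(e^{CU(t)}-1)$. No separate estimate of $[u\cdot\nabla,\dot S_m]a$ is required.

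One genuine issue: the right-hand side of \eqref{3.3} as printed has a typo --- the summation index clashes with the Lebesgue exponent $q$, and the $L^2$/exponent $3/2$ are not what is meant. The paper's own applications (equations \eqref{haode} and \eqref{ABA34}) use $\sum_{j\ge m}2^{3j/q}\|\dot\Delta_j a_0\|_{L^q}$, which is simply $\|(\mathrm{Id}-\dot S_m)a_0\|_{\dot B_{q,1}^{3/q}}$. Your final step, invoking Bernstein to pass to an $L^2$-based tail, is therefore unnecessary, and for $q<2$ (the main regime in this paper) it actually goes the wrong direction: Bernstein gives $2^{3j/2}\|\dot\Delta_j a_0\|_{L^2}\lesssim 2^{3j/q}\|\dot\Delta_j a_0\|_{L^q}$, not the reverse. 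The appeal to $L^2$ conservation of $a$ is likewise a red herring; it plays no role in the estimate.
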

In order to prove the uniqueness of the main theorems, we need the following proposition in \cite{abidi2013}, we omit the details here for its proof.
\begin{proposition} (see \cite{abidi2013}) \label{weiyixingmingti}
Let $\alpha\in (0, \frac14),$ $ p \in [3, 4],$ $u_0 \in B_{2,1}^{-\frac12}(\R^3)$ and $v$ be a divergence free vector field satisfying
$\nabla v\in {L_T^1({B_{p,1}^{\frac3p}})}.$
 And let $f \in {L_t^1({B_{2,1}^{-\frac12}})}$ and $ a\in L^\infty_T(H^2)$
 with $1 + a \ge  \underline{b} > 0$. We assume that $u \in C([0,T ];{B_{2,1}^{-\frac12}}) \cap{L_T^1({B_{2,1}^{\frac32}})}$ and $\nabla \Pi\in L^1_T(H^{-\frac12-\alpha})$
solve
\begin{eqnarray*}
\left\{\begin{aligned}
&\partial_t u+v\cdot\nabla u-(1+a)(\Delta u-\nabla\Pi)=f,\\
&\mathrm{div}u=0,\\
&u|_{t=0}=u_0.
\end{aligned}\right.
\end{eqnarray*}
Then for all $t\le T$, there holds:
\begin{align*}
\|u\|_{\widetilde{L}_t^{\infty}({B}_{2,1}^{-\frac12})}+\|u\|_{{L}_t^1({B}_{2,1}^{\frac32})}\lesssim&\Bigg\{
\|u_0\|_{{B_{2,1}^{-\frac12}}}+\int_0^t\|u\|_{{B_{2,1}^{-\frac12}}}\|\nabla v\|_{{B_{p,1}^{\frac3p}}}d\tau+\|a\|_{L_t^\infty(H^{\frac32+\alpha})}\|\nabla \Pi\|_{L^1_t(H^{-\frac12-\alpha})}\nonumber\\
&+\|a\|_{L_t^\infty(H^{2})}\|\nabla u\|_{L^1_t(L^2)}+\|u\|_{L^1_t(L^2)}+\|f\|_{{L_t^1({B_{2,1}^{-\frac12}})}}
\Bigg\}.
\end{align*}

\end{proposition}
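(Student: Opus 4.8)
The plan is to prove this a priori estimate by a Littlewood--Paley energy argument on the frequency-localized momentum equation, in the spirit of Danchin's estimates for linear Stokes systems with nonconstant coefficients. Since all the spaces in the statement are inhomogeneous, I would localize with the inhomogeneous blocks $\Delta_j$. Rewriting the equation as $\partial_t u+v\cdot\nabla u-(1+a)\Delta u=f-(1+a)\nabla\Pi$, applying $\Delta_j$, and commuting it past $v\cdot\nabla$ and past the coefficient $1+a$, one gets
\begin{align*}
\partial_t\Delta_j u+v\cdot\nabla\Delta_j u-(1+a)\Delta\Delta_j u
={}&\Delta_j f-[\Delta_j,v\cdot\nabla]u+[\Delta_j,a]\Delta u\\
&{}-[\Delta_j,a]\nabla\Pi-(1+a)\Delta_j\nabla\Pi .
\end{align*}
Taking the $L^2(\R^3)$ inner product with $\Delta_j u$: the transport term drops because $\mathrm{div}\,v=0$; an integration by parts in the viscous term produces $\int_{\R^3}(1+a)|\nabla\Delta_j u|^2\,dx\ge\underline b\,\|\nabla\Delta_j u\|_{L^2}^2$, which for $j\ge0$ is $\gtrsim\underline b\,2^{2j}\|\Delta_j u\|_{L^2}^2$ (for $j=-1$ no smoothing is available), together with an error term $\int_{\R^3}(\nabla a\cdot\nabla\Delta_j u)\cdot\Delta_j u\,dx$; and for the pressure I would use $\mathrm{div}\,\Delta_j u=0$ to rewrite $\langle(1+a)\Delta_j\nabla\Pi,\Delta_j u\rangle=\langle a\,\Delta_j\nabla\Pi,\Delta_j u\rangle=-\langle\Delta_j\Pi,\,\nabla a\cdot\Delta_j u\rangle$.

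The next step is to estimate each remaining term in $L^2$, divide by $\|\Delta_j u\|_{L^2}$, integrate in time, and take the $\ell^1$-sum over $j$ with the weights $2^{-j/2}$ so that the left-hand side reproduces $\|u\|_{\widetilde L^\infty_t(B_{2,1}^{-1/2})}+\|u\|_{L^1_t(B_{2,1}^{3/2})}$. The transport commutator $[\Delta_j,v\cdot\nabla]u$ is controlled by a commutator estimate of the type of Lemma \ref{jiaohuanzi} (with $q=2$, $s=\tfrac12$), giving a contribution $\lesssim\int_0^t\|u\|_{B_{2,1}^{-1/2}}\|\nabla v\|_{B_{p,1}^{3/p}}\,d\tau$; the range $p\in[3,4]$ guarantees $\nabla v\in B_{p,1}^{3/p}\hookrightarrow L^\infty$ and keeps the mixed-integrability products admissible. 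The coefficient commutator $[\Delta_j,a]\Delta u$ together with the integration-by-parts error $\int(\nabla a\cdot\nabla\Delta_j u)\cdot\Delta_j u$ is handled by Bony's decomposition and Bernstein's inequality, using only $a\in H^2$ (hence $\nabla a\in H^1\hookrightarrow L^6$): the high-frequency part is absorbed into the coercive term $2^{2j}\|\Delta_j u\|_{L^2}^2$ once $j$ exceeds a threshold fixed by $\|a\|_{L^\infty_T(H^2)}$ and $\underline b$, while the finitely many low frequencies contribute exactly $\|a\|_{L^\infty_t(H^{2})}\|\nabla u\|_{L^1_t(L^2)}$ and $\|u\|_{L^1_t(L^2)}$. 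The two pressure contributions $[\Delta_j,a]\nabla\Pi$ and $\langle\Delta_j\Pi,\nabla a\cdot\Delta_j u\rangle$ are treated by splitting $a\nabla\Pi$ (resp. $a\Delta_j u$) with Bony's decomposition and pairing Hölder exponents; the small parameter $\alpha\in(0,\tfrac14)$ is inserted precisely to leave room in the Sobolev embeddings and to keep the $j$-series summable, which is why the pressure enters through $\|a\|_{L^\infty_t(H^{3/2+\alpha})}\|\nabla\Pi\|_{L^1_t(H^{-1/2-\alpha})}$ rather than at the critical index. Finally $\Delta_j f$ gives $\|f\|_{L^1_t(B_{2,1}^{-1/2})}$ and the initial data give $\|u_0\|_{B_{2,1}^{-1/2}}$; collecting everything (with the coercive term on the left absorbing $\|u\|_{L^1_t(B_{2,1}^{3/2})}$) yields the stated inequality, the implicit constant depending on $\underline b$ and on $\|a\|_{L^\infty_T(H^2)}$.

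The main obstacle is the quasilinear viscous term $(1+a)\Delta u$: since no smallness is assumed on $a$ (not even on $\nabla a$), neither the commutator $[\Delta_j,a]\Delta u$ nor the integration-by-parts error $\int(\nabla a\cdot\nabla\Delta_j u)\cdot\Delta_j u$ can be absorbed into the good term at all frequencies — absorption works only above an $\|a\|_{H^2}$-dependent threshold, and on the remaining low block one is forced to pay with the lower-order quantities $\|\nabla u\|_{L^1_t(L^2)}$ and $\|u\|_{L^1_t(L^2)}$. This is exactly why these two seemingly superfluous terms must be kept on the right-hand side (in the uniqueness application they are reabsorbed by interpolation against the critical norms and Gronwall's lemma). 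A secondary delicate point is the bookkeeping of the pressure so that the slightly sub-critical pair $H^{3/2+\alpha}$, $H^{-1/2-\alpha}$ suffices, which is what forces $\alpha\in(0,\tfrac14)$ and is most comfortably carried out within the range $p\in[3,4]$.
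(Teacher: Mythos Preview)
The paper does not give its own proof of this proposition: it is quoted verbatim from \cite{abidi2013} with the remark ``we omit the details here for its proof.'' So there is nothing to compare against directly; what matters is whether your sketch reproduces the argument of \cite{abidi2013}. It does, and your identification of the main mechanism is correct: after localizing and integrating by parts, the coercive term $\underline b\,2^{2j}\|\Delta_j u\|_{L^2}^2$ dominates both the integration-by-parts error $\int(\nabla a\cdot\nabla\Delta_j u)\cdot\Delta_j u\,dx$ and the viscous commutator $[\Delta_j,a]\Delta u$ for $j$ above an $\|a\|_{H^2}$--dependent threshold (each of these is $O(\|a\|_{H^2}\,2^{3j/2}\|\Delta_j u\|_{L^2}^2)$, hence a factor $2^{-j/2}$ smaller), while the finitely many low blocks are estimated crudely and produce the terms $\|a\|_{L^\infty_t(H^2)}\|\nabla u\|_{L^1_t(L^2)}$ and $\|u\|_{L^1_t(L^2)}$ on the right-hand side.

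One small point worth tightening: for the pressure you do not actually need to split into $[\Delta_j,a]\nabla\Pi$ plus $\langle\Delta_j\Pi,\nabla a\cdot\Delta_j u\rangle$. Since $\mathrm{div}\,\Delta_j u=0$, the whole contribution is $\langle\Delta_j(a\nabla\Pi),\Delta_j u\rangle=\langle\Delta_j\mathbb P(a\nabla\Pi),\Delta_j u\rangle$, and one bounds $\|\mathbb P(a\nabla\Pi)\|_{B_{2,1}^{-1/2}}$ directly by a product law of the type $H^{3/2+\alpha}\times H^{-1/2-\alpha}\to B_{2,1}^{-1/2}$; the shift by $\alpha>0$ is exactly what turns the $\ell^2$ summability of $H^s$ into the $\ell^1$ summability needed for $B_{2,1}^{-1/2}$, and the upper bound $\alpha<\tfrac14$ keeps the paraproduct $T_{\nabla\Pi}a$ in range. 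Apart from this cosmetic simplification, your outline matches the original argument.
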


\section{\bf Elliptic estimates with variable coefficients}
This section is devoted to the proof of new estimates for the elliptic equation with variable coefficients.
\begin{proposition}\label{tuoyuanguji}
Assume $1< q\le p$ with $p\in(\frac{1+\sqrt{17}}{4},\frac{5+\sqrt{17}}{2})$ and $\frac1p+\frac1q\geq\frac12$, $a\in\dot{B}_{q,1}^{\frac{3}{q}}(\R^3)$ with $$1+a\geq\kappa>0.$$
Let $F=(F_1,F_2,F_3)\in\dot{B}_{p,2}^{-\frac32+\frac3p}(\R^3)$ and $\nabla\Pi\in\dot{B}_{p,2}^{-\frac32+\frac3p}(\R^3)$ solve
\begin{align}\label{elliptic equation}
\diverg\big((1+a)\nabla\Pi\big)=\diverg F.
\end{align}
Then, we have
\begin{align}\label{elliptic estimate1}
\|\nabla\Pi\|_{\dot{B}_{p,2}^{-\frac32+\frac3p}}
\lesssim\big(1+\|a\|_{\dot{B}_{q,1}^{\frac{3}{q}}}\big)\|\mathbb{Q}F\|_{\dot{B}_{p,2}^{-\frac32+\frac3p}}.
\end{align}

\end{proposition}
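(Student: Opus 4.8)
The plan is to rewrite the elliptic equation \eqref{elliptic equation} as a perturbation of the constant-coefficient Laplace equation and to estimate the perturbative term using the integral-type commutator from Lemma \ref{jiaohuanzi2}. First I would apply $\dot{\Delta}_j$ to \eqref{elliptic equation} and note that, since $\nabla\Pi = \mathbb{Q}\nabla\Pi$ and only the gradient part of $F$ matters, we may replace $F$ by $\mathbb{Q}F$ from the outset. Writing the equation as $\Delta\Pi = \diverg F - \diverg(a\nabla\Pi)$ and localizing, one gets
\begin{align*}
\Delta\dot{\Delta}_j\Pi = \diverg\dot{\Delta}_j\mathbb{Q}F - \diverg\big(a\dot{\Delta}_j\nabla\Pi\big) - \diverg\big([\dot{\Delta}_j,a]\nabla\Pi\big).
\end{align*}
Inverting the Laplacian (using $\nabla(-\Delta)^{-1}\diverg$, a zero-order Fourier multiplier, which is bounded on $L^p$ for $1<p<\infty$ by Bernstein's Lemma \ref{bernstein} together with the usual Mikhlin argument on each dyadic block), I would obtain a pointwise-in-$j$ bound of the form
\begin{align*}
\|\dot{\Delta}_j\nabla\Pi\|_{L^p} \lesssim \|\dot{\Delta}_j\mathbb{Q}F\|_{L^p} + \|a\dot{\Delta}_j\nabla\Pi\|_{L^p} + \|[\dot{\Delta}_j,a]\nabla\Pi\|_{L^p}.
\end{align*}

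Next I would split $a = \dot{S}_m a + (a - \dot{S}_m a)$ for a large integer $m$ to be chosen. The high-frequency remainder $a - \dot{S}_m a$ is small in $\dot{B}_{q,1}^{3/q}$ and can be absorbed: using the product law from Remark \ref{product law1}(2), namely $\|(a-\dot{S}_ma)\nabla\Pi\|_{\dot{B}_{p,2}^{-3/2+3/p}} \lesssim \|a-\dot{S}_ma\|_{\dot{B}_{q,1}^{3/q}}\|\nabla\Pi\|_{\dot{B}_{p,2}^{-3/2+3/p}}$ (this is the $\ell^2$-in-$j$ version of the stated $\ell^1$ law, which holds by the same Bony decomposition), we choose $m$ so that $C\|a-\dot{S}_ma\|_{\dot{B}_{q,1}^{3/q}} \le \tfrac12$ and move that piece to the left. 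For the low-frequency part $\dot{S}_m a$, the term $\dot{S}_ma\,\dot{\Delta}_j\nabla\Pi$ is a genuine nonlocal obstruction: $\dot{S}_ma$ does not decay, so it cannot simply be absorbed. Here I would use that $1+\dot{S}_ma$ is bounded below (for $m$ large, since $1+a\ge\kappa$ and $\dot{S}_ma\to a$), rewrite the equation instead as $\diverg\big((1+\dot{S}_ma)\nabla\Pi\big) = \diverg\mathbb{Q}F - \diverg\big((a-\dot{S}_ma)\nabla\Pi\big)$, and treat the left side as an elliptic operator with a \emph{smooth, slowly varying} coefficient $1+\dot{S}_ma$, whose frequency support is confined to a ball of radius $\sim 2^m$. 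The commutator $[\dot{\Delta}_j,1+\dot{S}_ma]\nabla\Pi = [\dot{\Delta}_j,\dot{S}_ma]\nabla\Pi$ only couples frequencies $j$ with $j' \lesssim m$, and Lemma \ref{jiaohuanzi2} gives exactly $\|[\dot{\Delta}_j,\dot{S}_ma]\nabla\Pi\|_{L^p} \lesssim 2^{-(3/p-1)j}d_j\,\|a\|_{\dot{B}_{q,1}^{3/q+1/2}}\|\nabla\Pi\|_{\dot{B}_{p,2}^{-3/2+3/p}}$ — but note the regularity $3/q+1/2$ exceeds $3/q$, so I must instead use the low-frequency truncation: $\|\dot{S}_ma\|_{\dot{B}_{q,1}^{3/q+1/2}} \lesssim 2^{m/2}\|a\|_{\dot{B}_{q,1}^{3/q}}$, at the cost of a factor $2^{m/2}$.

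The heart of the argument — and the step I expect to be the main obstacle — is then a bootstrap/interpolation balancing of the two competing effects: making $m$ large forces $\|a-\dot{S}_ma\|_{\dot{B}_{q,1}^{3/q}}$ small (good, needed for absorption) but blows up the commutator constant by $2^{m/2}$ (bad). The trick is that the commutator term, once summed in $\ell^2(j)$, produces $2^{m/2}\|a\|_{\dot{B}_{q,1}^{3/q}}\|\nabla\Pi\|_{\dot{B}_{p,2}^{-3/2+3/p}}$, which is \emph{not} absorbable directly; instead one interpolates $\|\nabla\Pi\|$ between a lower and higher Besov index, or — more in the spirit of this paper — one iterates the estimate over the finitely many dyadic scales below $m$, summing a geometric-type series whose ratio is controlled precisely by the constraint $p \in (\tfrac{1+\sqrt{17}}{4},\tfrac{5+\sqrt{17}}{2})$ and $\frac1p+\frac1q\ge\frac12$ (these are exactly the ranges making the relevant exponents in Lemma \ref{jiaohuanzi2} and the product laws line up so the series converges). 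After fixing $m$ as a function of $\|a\|_{\dot{B}_{q,1}^{3/q}}$ in the optimal way, one collects the surviving constant, which comes out as $C(1+\|a\|_{\dot{B}_{q,1}^{3/q}})$ up to the exponential dependence hidden in the choice of $m$; careful accounting (using that $m \sim \log(1+\|a\|_{\dot{B}_{q,1}^{3/q}})$ suffices) yields the stated polynomial bound \eqref{elliptic estimate1}. Taking $\ell^2$ norms in $j$ throughout and using $\|\mathbb{Q}F\|_{\dot{B}_{p,2}^{-3/2+3/p}} \le \|F\|_{\dot{B}_{p,2}^{-3/2+3/p}}$ completes the proof.
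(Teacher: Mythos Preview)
Your proposal has a genuine gap at precisely the point you yourself flag as ``the main obstacle.'' After localizing and splitting $a=\dot S_ma+(a-\dot S_ma)$, you end up bounding $\|\nabla\Pi\|_{\dot B_{p,2}^{-3/2+3/p}}$ by a constant times itself: the high-frequency piece contributes $C\|a-\dot S_ma\|_{\dot B_{q,1}^{3/q}}\|\nabla\Pi\|_{\dot B_{p,2}^{-3/2+3/p}}$, which can be made $\le\frac12\|\nabla\Pi\|$ by taking $m$ large, but the low-frequency commutator (via Lemma~\ref{jiaohuanzi2}) contributes $C\,2^{m/2}\|a\|_{\dot B_{q,1}^{3/q}}\|\nabla\Pi\|_{\dot B_{p,2}^{-3/2+3/p}}$, which then blows up. No choice of $m$ makes both coefficients small simultaneously, and your suggestions of ``iterating over finitely many dyadic scales'' or ``interpolating'' do not break this circularity: there is no auxiliary norm of $\nabla\Pi$ in your scheme that is already controlled. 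The optimal choice $m\sim\log(1+\|a\|)$ still leaves a coefficient of order $(1+\|a\|)^{3/2}$ in front of $\|\nabla\Pi\|$ on the right, which cannot be absorbed.

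The paper's proof avoids this circularity by a different mechanism. First, the trivial energy estimate $\kappa\|\nabla\Pi\|_{L^2}\le\|\mathbb QF\|_{L^2}$ is obtained directly from $1+a\ge\kappa$ and \eqref{elliptic equation}, with no commutator and no smallness. Then, for $p\in(\tfrac{1+\sqrt{17}}{4},2]$, one localizes, multiplies by $|\dot\Delta_j\Pi|^{p-2}\dot\Delta_j\Pi$, and uses a \emph{new} integral-type commutator estimate (Lemma~\ref{Lemma3.1}) bounding $\int\diverg([\dot\Delta_j,a]\nabla\Pi)\,|\dot\Delta_j\Pi|^{p-2}\dot\Delta_j\Pi\,dx$ in terms of $\|\nabla\Pi\|_{L^2}$ --- not the Besov norm being sought. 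The embedding $\dot B_{p,2}^{-3/2+3/p}\hookrightarrow L^2$ (valid for $p\le2$) then closes the estimate without any absorption argument. For $p\in[2,\tfrac{5+\sqrt{17}}{2})$ one uses duality: since $p'\in(\tfrac{1+\sqrt{17}}{4},2]$ and $\tfrac{1}{p'}-\tfrac1q\le\tfrac12$, the just-proved case applies to the adjoint problem. The range for $p$ and the condition $\tfrac1p+\tfrac1q\ge\tfrac12$ arise from Lemma~\ref{Lemma3.1} and this duality, not from a geometric-series convergence as you conjectured. Note also that Lemma~\ref{jiaohuanzi2}, which you invoke, is used in the paper only later (Section~4), where the $2^{m/2}$ factor is harmless because it multiplies lower-order terms rather than the quantity being estimated.
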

\begin{proof} Thanks to $1+a\geq\kappa>0$ and $\diverg F=\diverg\mathbb{Q}F$, similar to the proof of Lemma 2 in \cite{danchin2010}, we readily deduce from \eqref{elliptic equation} that
\begin{align}\label{elliptic estimateL2}
\kappa\|\nabla\Pi\|_{L^2}\leq\|\mathbb{Q}F\|_{L^2}.
\end{align}
Applying $\dot{\Delta}_j$ to \eqref{elliptic equation} gives
\begin{align}\label{}
\diverg\big((1+a)\dot{\Delta}_j\nabla\Pi\big)=&\diverg\dot{\Delta}_jF-\diverg\big([\dot{\Delta}_j,a]\nabla\Pi\big).
\end{align}
We next multiply the above equation by $-|\dot{\Delta}_j\Pi|^{p-2}\dot{\Delta}_j\Pi$ and integrate over $\R^3$.
Then applying Lemma 8 in Appendix B of \cite{danchin2010} implies for some constants $c$ and $C$
\begin{align}\label{Ee1}
c\kappa2^{2j}\|\dot{\Delta}_j\Pi\|_{L^p}^p\leq C2^j\|\dot{\Delta}_j\mathbb{Q}F\|_{L^p}\|\dot{\Delta}_j\Pi\|_{L^p}^{p-1}
+\int_{\R^3}\diverg\big([\dot{\Delta}_j,a]\nabla\Pi\big)\cdot|\dot{\Delta}_j\Pi|^{p-2}\dot{\Delta}_j\Pi dx.
\end{align}
In order to estimate the last term on the right hand side of (\ref{Ee1}),
we need the following commutator estimates of integral type.
The estimates of the following lemma have no restrict on the size of the relationship of $p,q$ which are somewhat more general than the necessary one in the present paper.
\begin{Lemma}\label{Lemma3.1}
Let $(p,q)\in(\frac{1+\sqrt{17}}{4},2]\times[1,\infty)$ with $\frac1p-\frac1q\leq\frac12$.
Then we have
\begin{align}\label{Ijpq}
I_j\stackrel{\rm{def}}{=}\int_{\R^3}\diverg\big([\dot{\Delta}_j,a]\nabla\Pi\big)\cdot|\dot{\Delta}_j\Pi|^{p-2}\dot{\Delta}_j\Pi dx
\lesssim d_j2^{j(\frac52-\frac{3}{p})}\|a\|_{\dot{B}_{q,1}^{\frac{3}{q}}}\|\nabla\Pi\|_{L^2}\|\dot{\Delta}_j\Pi\|_{L^p}^{p-1}.
\end{align}

\end{Lemma}
\begin{proof}  Noticing that we can not directly use integration by parts. For this, we first get by using Bony's decomposition
\begin{align}\label{}
I_j=&\int_{\R^3}\diverg\big([\dot{\Delta}_j,\dot{T}_{a}]\nabla\Pi\big)\cdot|\dot{\Delta}_j\Pi|^{p-2}\dot{\Delta}_j\Pi dx
+\int_{\R^3}\diverg\dot{\Delta}_j\big(\dot{T}'_{\nabla\Pi}a\big)\cdot|\dot{\Delta}_j\Pi|^{p-2}\dot{\Delta}_j\Pi dx\nonumber\\
&-\int_{\R^3}\diverg\big(\dot{T}'_{\dot{\Delta}_j\nabla\Pi}a\big)\cdot|\dot{\Delta}_j\Pi|^{p-2}\dot{\Delta}_j\Pi dx\nonumber\\
\stackrel{\rm{def}}{=}&I_j^1+I_j^2+I_j^3.
\end{align}
By the definition of Bony's decomposition, we have
\begin{align}\label{cmt}
[\dot{\Delta}_j,\dot{T}_{a}]\nabla\Pi
=-2^{3j}\sum_{|j'-j|\leq4}\int_{\R^3}h(2^jy)\dot{\Delta}_{j'}\nabla\Pi(x-y)dy\int_0^1y\cdot\nabla\dot{S}_{j'-1}a(x-\tau y)d\tau,
\end{align}
from which, we get by using the H$\rm{\ddot{o}}$lder inequality and Lemma \ref{bernstein} that
\begin{align}\label{}
\|[\dot{\Delta}_j,\dot{T}_{a}]\nabla\Pi\|_{L^p}
\lesssim2^{-j}\sum_{|j'-j|\leq4}\|\nabla\dot{S}_{j'-1}a\|_{L^{\frac{2p}{2-p}}}\|\dot{\Delta}_{j'}\nabla\Pi\|_{L^2}
\lesssim c_j2^{-j(\frac3p-\frac32)}\|a\|_{\dot{B}_{q,1}^{\frac{3}{q}}}\|\nabla\Pi\|_{L^2},
\end{align}
where we have used $$\|\nabla\dot{S}_{j'-1}a\|_{L^{\frac{2p}{2-p}}}\lesssim d_{j'}2^{j'(\frac52-\frac3p)}\|a\|_{\dot{B}_{q,1}^{\frac{3}{q}}}
\ \ \text{for}\ \ p>\frac65\ \ \text{and}\ \ \frac1p-\frac1q\leq\frac12.$$
Note that $[\dot{\Delta}_j,\dot{T}_{a}]\nabla\Pi$ is spectrally supported in an annulus of size $2^j$. Whence we infer
\begin{align}\label{Ij1}
I_j^1\lesssim d_j2^{j(\frac52-\frac{3}{p})}\|a\|_{\dot{B}_{q,1}^{\frac{3}{q}}}\|\nabla\Pi\|_{L^2}\|\dot{\Delta}_j\Pi\|_{L^p}^{p-1}.
\end{align}
Owing to the localization properties of the Littlewood-Paley decomposition, we have
\begin{align*}
\dot{\Delta}_j\big(\dot{T}'_{\nabla\Pi}a\big)=\sum_{j'\geq j-3}\dot{\Delta}_j\big(\dot{\Delta}_{j'}a\dot{S}_{j'+2}\nabla\Pi\big).
\end{align*}
If $q\geq2$, we denote $\frac1\gamma\stackrel{\rm{def}}{=}\frac12+\frac1q\geq\frac1p$ and apply Lemma \ref{bernstein} to obtain
\begin{align}\label{}
\|\dot{\Delta}_j\big(\dot{T}'_{\nabla\Pi}a\big)\|_{L^p}\lesssim2^{3j(\frac1\gamma-\frac1p)}\sum_{j'\geq j-3}\|\dot{\Delta}_{j'}a\|_{L^{q}}\|\dot{S}_{j'+2}\nabla\Pi\|_{L^2}
\lesssim d_j2^{j(\frac32-\frac{3}{p})}\|a\|_{\dot{B}_{q,1}^{\frac{3}{q}}}\|\nabla\Pi\|_{L^2}.
\end{align}
While if $q<2$, the embedding $\dot{B}_{q,1}^{\frac{3}{q}}(\R^3)\hookrightarrow\dot{B}_{2,1}^{\frac32}(\R^3)$ ensures that the above inequality still holds.
Thus we obtain
\begin{align}\label{Ij2}
I_j^2\lesssim d_j2^{j(\frac52-\frac{3}{p})}\|a\|_{\dot{B}_{q,1}^{\frac{3}{q}}}\|\nabla\Pi\|_{L^2}\|\dot{\Delta}_j\Pi\|_{L^p}^{p-1}.
\end{align}
For $I_j^3$,
due to the fact that
\begin{align*}
\sum_{j'\geq j}\dot{S}_{j'+2}\dot{\Delta}_{j}\nabla\Pi\dot{\Delta}_{j'}a
=&\sum_{j'\geq j}(\sum_{k\le j'+1}\dot{\Delta}_{k}\dot{\Delta}_{j}\nabla\Pi)\dot{\Delta}_{j'}a
\nonumber\\
=&\sum_{j'\geq j}((I-\sum_{k\ge j'+2}\dot{\Delta}_{k})\dot{\Delta}_{j}\nabla\Pi)\dot{\Delta}_{j'}a=\sum_{j'\geq j}\dot{\Delta}_{j}\nabla\Pi\dot{\Delta}_{j'}a,
\end{align*}
hence, we can write
\begin{align}\label{}
\dot{T}'_{\dot{\Delta}_j\nabla\Pi}a=\sum_{j'-j=-1,-2}\dot{\Delta}_{j'}a\dot{S}_{j'+2}\dot{\Delta}_{j}\nabla\Pi
+\sum_{j'\geq j}\dot{\Delta}_{j'}a\dot{\Delta}_{j}\nabla\Pi.
\end{align}

From which, we get by applying Lemma 8 in Appendix B of \cite{danchin2010} that
\begin{align*}
I_j^3=&-\sum_{j'-j=-1,-2}
\int_{\R^3}\diverg\big(\dot{\Delta}_{j'}a\dot{S}_{j'+2}\dot{\Delta}_{j}\nabla\Pi\big)\cdot|\dot{\Delta}_j\Pi|^{p-2}\dot{\Delta}_j\Pi dx\nonumber\\
&+(p-1)\sum_{j'\geq j}\int_{\R^3}\dot{\Delta}_{j'}a|\dot{\Delta}_{j}\nabla\Pi|^2\cdot|\dot{\Delta}_j\Pi|^{p-2}dx
\nonumber\\\stackrel{\rm{def}}{=}&I_j^{3,1}+I_j^{3,2}.
\end{align*}
Then it is easy to observe for $\frac1p-\frac1q\leq\frac12$ that
\begin{align}
I_j^{3,1}
\lesssim&2^{j}\sum_{j'-j=-1,-2}\|\dot{\Delta}_{j'}a\|_{L^{\frac{2p}{2-p}}}\|\dot{\Delta}_{j}\nabla\Pi\|_{L^2}\|\dot{\Delta}_j\Pi\|_{L^p}^{p-1}\nonumber\\
\lesssim&d_j2^{j(\frac52-\frac{3}{p})}\|a\|_{\dot{B}_{q,1}^{\frac{3}{q}}}\|\nabla\Pi\|_{L^2}\|\dot{\Delta}_j\Pi\|_{L^p}^{p-1}.
\end{align}
While the assumption $p\in(\frac{1+\sqrt{17}}{4},2]$ ensures that $\frac{1}{p-1}<\frac{2p}{2-p}$.
In the case when $\max\{p,\frac{1}{p-1}\}<q\leq\frac{2p}{2-p}$, we have $(p-2)q'+1>0$ so that we can use a similar approximate argument
as in the proof of Lemma A.5 in the appendix of \cite{danchincpde2001} to obtain
\begin{align*}
&\big\||\dot{\Delta}_{j}\nabla\Pi|^2\cdot|\dot{\Delta}_j\Pi|^{p-2}\big\|_{L^{q'}}^{q'}\nonumber\\
=&\int_{\R^3}|\dot{\Delta}_j\Pi|^{(p-2)q'}\dot{\Delta}_{j}\nabla\Pi\cdot\dot{\Delta}_{j}\nabla\Pi|\dot{\Delta}_{j}\nabla\Pi|^{2q'-2}dx\nonumber\\
=&-\frac{1}{(p-2)q'+1}\int_{\R^3}|\dot{\Delta}_j\Pi|^{(p-2)q'}\dot{\Delta}_{j}\Pi
\cdot\diverg\big(\dot{\Delta}_{j}\nabla\Pi|\dot{\Delta}_{j}\nabla\Pi|^{2q'-2}\big)dx.
\end{align*}
Denoting $\frac1r\stackrel{\rm{def}}{=}\frac1p-\frac1q\leq\frac12$ and using the H$\rm{\ddot{o}}$lder inequality and Lemma \ref{bernstein} gives
\begin{align*}
&\big\||\dot{\Delta}_{j}\nabla\Pi|^2\cdot|\dot{\Delta}_j\Pi|^{p-2}\big\|_{L^{q'}}^{q'}\nonumber\\
\lesssim&\big\||\dot{\Delta}_j\Pi|^{(p-2)q'+1}\big\|_{L^{\frac{p}{(p-2)q'+1}}}
\big\||\dot{\Delta}_{j}\nabla\Pi|^{q'-1}\big\|_{L^{\frac{p}{q'-1}}}\big\||\dot{\Delta}_{j}\nabla\Pi|^{q'-1}\big\|_{L^{\frac{r}{q'-1}}}
\big\|\nabla^2\dot{\Delta}_{j}\Pi\big\|_{L^{r}}\nonumber\\
\lesssim&2^{jq'(\frac52-\frac3p+\frac3q)}\big\|\dot{\Delta}_j\Pi\big\|_{L^{p}}^{(p-1)q'}\big\|\nabla\dot{\Delta}_{j}\Pi\big\|_{L^{2}}^{q'},
\end{align*}
which implies
\begin{align}\label{Ij32}
I_j^{3,2}\lesssim&\sum_{j'\geq j}\|\dot{\Delta}_{j'}a\|_{L^{q}}
\big\||\dot{\Delta}_{j}\nabla\Pi|^2\cdot|\dot{\Delta}_j\Pi|^{p-2}\big\|_{L^{q'}}\nonumber\\
\lesssim&d_j2^{j(\frac52-\frac{3}{p})}\|a\|_{\dot{B}_{q,1}^{\frac{3}{q}}}\|\nabla\Pi\|_{L^2}\big\|\dot{\Delta}_j\Pi\big\|_{L^{p}}^{p-1}.
\end{align}
Similarly, \eqref{Ij32} is valid for $q\leq\max\{p,\frac{1}{p-1}\}$ according to embedding.
Summing up the inequalities \eqref{Ij1}--\eqref{Ij32} results in \eqref{Ijpq}.
\end{proof}

\noindent Now, let us go back to the estimate of (\ref{Ee1}).

\rm{(\romannumeral1)} When $p\in(\frac{1+\sqrt{17}}{4},2]$, substituting \eqref{Ijpq} into \eqref{Ee1} leads to
\begin{align*}
\|\dot{\Delta}_j\nabla\Pi\|_{L^p}\lesssim\|\dot{\Delta}_j\mathbb{Q}F\|_{L^p}+d_j2^{j(\frac32-\frac{3}{p})}\|a\|_{\dot{B}_{q,1}^{\frac{3}{q}}}\|\nabla\Pi\|_{L^2},
\end{align*}
which along with \eqref{elliptic estimateL2} and the embedding $\dot{B}_{p,2}^{-\frac32+\frac3p}(\R^3)\hookrightarrow L^2(\R^3)$, $l^1\hookrightarrow l^2$ gives
\begin{align}\label{Ee2}
\|\nabla\Pi\|_{\dot{B}_{p,2}^{-\frac32+\frac3p}}
\lesssim\big(1+\|a\|_{\dot{B}_{q,1}^{\frac{3}{q}}}\big)\|\mathbb{Q}F\|_{\dot{B}_{p,2}^{-\frac32+\frac3p}}.
\end{align}

Next, we consider the case when $p\in[2,\frac{5+\sqrt{17}}{2})$ with $\frac1p+\frac1q\geq\frac12$.
In this case, motivated by \cite{abidi2012,abidi2013}, we shall use a duality argument:
\begin{align}\label{Ee3}
\|\nabla\Pi\|_{\dot{B}_{p,2}^{-\frac32+\frac3p}}=\sup_{\|g\|_{\dot{B}_{p',2}^{\frac{3}{2}-\frac3p}}=1}\langle\nabla\Pi,g\rangle
=\sup_{\|g\|_{\dot{B}_{p',2}^{\frac{3}{p'}-\frac32}}=1}-\langle\Pi,\diverg g\rangle,
\end{align}
where $\langle\cdot,\cdot\rangle$ denotes the duality bracket between $\mathscr{S}'(\R^3)$ and $\mathscr{S}(\R^3)$.
Noticing that $p'\in(\frac{1+\sqrt{17}}{4},2]$ and $\frac{1}{p'}-\frac1q\leq\frac12$,
then applying \eqref{Ee2} ensures that for any $g\in\dot{B}_{p',2}^{\frac{3}{p'}-\frac32}(\R^3)$,
there exists a unique solution $\nabla P_g\in\dot{B}_{p',2}^{\frac{3}{p'}-\frac32}(\R^3)$ to the elliptic equation
\begin{align*}
\diverg\big((1+a)\nabla P_g\big)=\diverg g,
\end{align*}
such that
\begin{align}\label{Ee4}
\|\nabla P_g\|_{\dot{B}_{p',2}^{\frac{3}{p'}-\frac32}}
\lesssim\big(1+\|a\|_{\dot{B}_{q,1}^{\frac{3}{q}}}\big)\|g\|_{\dot{B}_{p',2}^{\frac{3}{p'}-\frac32}}.
\end{align}
We proceed
\begin{align}\label{}
-\langle\Pi,\diverg g\rangle=&-\langle\Pi,\diverg\big((1+a)\nabla P_g\big)\rangle=-\langle\diverg\big((1+a)\nabla\Pi\big),P_g\rangle\nonumber\\
=&-\langle\diverg F,P_g\rangle=\langle\mathbb{Q}F,\nabla P_g\rangle
\leq\|\mathbb{Q}F\|_{\dot{B}_{p,2}^{-\frac32+\frac3p}}\|\nabla P_g\|_{\dot{B}_{p',2}^{\frac{3}{p'}-\frac32}},
\end{align}
which along with \eqref{Ee3} and \eqref{Ee4} implies \eqref{elliptic estimate1}.

This completes the proof of the proposition.
\end{proof}

\section{Linear estimates}
With the pressure estimates in hand, now, we are going to give the linear estimates for the inhomogeneous incompressible Navier-Stokes equations, more precisely, we can get the following propositon:
\begin{proposition}\label{suduchangxianxing}
Assume   $1< q\le p$ with $p\in[3,\frac{5+\sqrt{17}}{2})$ and $\frac1p+\frac1q\geq\frac12$,   $u_0\in\dot{B}_{p,1}^{-1+\frac3p}(\R^3)$ and $a\in L_T^\infty(\dot{B}_{q,1}^{\frac{3}{q}}(\R^3))$,
with $1+a\geq\kappa>0$.
Let $f\in L_T^1(\dot{B}_{p,1}^{-1+\frac3p}(\R^3))\cap L_T^1(\dot{B}_{p,1}^{-\frac32+\frac3p}(\R^3))$,
 $(u,\nabla\Pi)\in C([0,T];\dot{B}_{p,1}^{-1+\frac3p}(\R^3))\cap L_T^1(\dot{B}_{p,1}^{1+\frac3p}(\R^3))
\times L_T^1(\dot{B}_{p,1}^{-1+\frac3p}(\R^3))$ solve
\begin{eqnarray}\label{Model5}
\left\{\begin{aligned}
&\partial_t u-\diverg((1+a)\nabla u)+(1+a)\nabla\Pi=f,\\
&\mathrm{div}u=0,\\
&u|_{t=0}=u_0.
\end{aligned}\right.
\end{eqnarray}
Then there holds for $t\in[0,T]$
\begin{align}\label{suduguji}
&\|u\|_{\widetilde{L}_t^\infty(\dot{B}_{p,1}^{-1+\frac3p})}+\|u\|_{L_t^1(\dot{B}_{p,1}^{1+\frac3p})}
+\|\nabla\Pi\|_{L_t^1(\dot{B}_{p,1}^{-1+\frac3p})}\nonumber\\
\lesssim&\|u_0\|_{\dot{B}_{p,1}^{-1+\frac3p}}+\|f\|_{L_t^1(\dot{B}_{p,1}^{-1+\frac3p})}+2^{m}\|a\|_{\widetilde{L}_t^\infty(\dot{B}_{q,1}^{\frac{3}{q}})}\|u\|_{L_t^1(\dot{B}_{p,1}^{\frac{3}{p}})}
\nonumber\\
&+2^{\frac m2}\big(1+\|a\|_{\widetilde{L}_t^\infty(\dot{B}_{q,1}^{\frac{3}{q}})}\big)^2(\|f\|_{L_t^1(\dot{B}_{p,1}^{-\frac32+\frac3p})}
+\|a\|_{L_t^\infty(\dot{B}_{q,1}^{\frac{3}{q}})}\|u\|_{L_t^1(\dot{B}_{p,1}^{\frac12+\frac{3}{p}})}),
\end{align}
provided that
\begin{align}\label{A122}
\big(1+\|a\|_{L_T^\infty(\dot{B}_{q,1}^{\frac{3}{q}})}\big)^2\|a-\dot{S}_m a\|_{L_T^\infty(\dot{B}_{q,1}^{\frac{3}{q}})}
\leq c_0
\end{align}
for some sufficiently small positive constant $c_0$ and some integer $m\in \mathbb{Z}$.
\end{proposition}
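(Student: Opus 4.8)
The plan is to treat the system \eqref{Model5} as a heat equation with a perturbative right-hand side, splitting the variable coefficient $a$ into its low-frequency part $\dot S_m a$ and its (small, by \eqref{A122}) high-frequency tail $a-\dot S_m a$. Rewriting the momentum equation as $\partial_t u-\Delta u = \diverg((a-\dot S_m a)\nabla u)+\diverg(\dot S_m a\,\nabla u)-(1+a)\nabla\Pi+f$, I would apply the standard maximal-regularity estimate for the heat semigroup in Chemin--Lerner spaces (the heat-equation estimates promised at the end of Section 2) at the regularity level $\dot B_{p,1}^{-1+\frac3p}$. The term $\diverg\big(\dot S_m a\,\nabla u\big)$ is spectrally harmless: Bernstein's Lemma \ref{bernstein} gives a factor $2^m$, producing the $2^m\|a\|_{\widetilde L_t^\infty(\dot B_{q,1}^{3/q})}\|u\|_{L_t^1(\dot B_{p,1}^{3/p})}$ contribution. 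The term $\diverg\big((a-\dot S_m a)\nabla u\big)$ is estimated by the product law in Remark \ref{product law1}(1) (the $\dot B_{p,1}^{3/p}$ version applied to $\nabla u\in\dot B_{p,1}^{3/p}$), and since its coefficient obeys \eqref{A122}, it can be absorbed into the left-hand side $\|u\|_{L_t^1(\dot B_{p,1}^{1+3/p})}$.

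The genuinely new ingredient is the pressure term. Taking the divergence of the momentum equation and using $\diverg u=0$ gives the elliptic equation $\diverg\big((1+a)\nabla\Pi\big)=\diverg\big(f+\diverg((1+a)\nabla u\,)\big)$ — more precisely $\nabla\Pi=\mathcal H_a(\mathbb Q F)$ with $F$ collecting $f$ and the viscous term. Here Proposition \ref{tuoyuanguji} applies at regularity $\dot B_{p,2}^{-\frac32+\frac3p}$ (legitimate since $p\in[3,\frac{5+\sqrt{17}}{2})$ and $\frac1p+\frac1q\ge\frac12$), yielding $\|\nabla\Pi\|_{\dot B_{p,2}^{-3/2+3/p}}\lesssim(1+\|a\|_{\dot B_{q,1}^{3/q}})\|\mathbb Q F\|_{\dot B_{p,2}^{-3/2+3/p}}$; controlling $\mathbb Q F$ requires the product laws of Remark \ref{product law1}(2) and Lemma \ref{product law2} to handle $(1+a)\nabla u$ and $(1+a)\nabla\Pi$ at the $-\frac32+\frac3p$ level. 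This produces the terms $\|f\|_{L_t^1(\dot B_{p,1}^{-3/2+3/p})}$ and $\|a\|_{L_t^\infty(\dot B_{q,1}^{3/q})}\|u\|_{L_t^1(\dot B_{p,1}^{1/2+3/p})}$ with the $(1+\|a\|)^2$ prefactor (one power from the elliptic estimate, one from iterating the bound on $\nabla\Pi$ inside $F$), and then I convert this $L^2$-type pressure bound back into the $\dot B_{p,1}^{-1+3/p}$ norm needed in \eqref{suduguji} by interpolating between $\dot B_{p,2}^{-3/2+3/p}$ and the higher-regularity control coming from maximal regularity, which is where the $2^{m/2}$ factor enters (splitting the $\dot B_{p,1}^{-1+3/p}$ sum at frequency $m$: low frequencies via the $L^2$-level estimate with a $2^{m/2}$ loss, high frequencies absorbed).

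The main obstacle is the last point: closing the estimate for $\nabla\Pi$ in $L_t^1(\dot B_{p,1}^{-1+3/p})$ rather than merely in the weaker $\dot B_{p,2}^{-3/2+3/p}$ norm delivered by the elliptic estimate. One must carefully bookkeep the frequency splitting so that the high-frequency part of $\nabla\Pi$ is controlled by $\|u\|_{L_t^1(\dot B_{p,1}^{1+3/p})}$ (through $\diverg((1+a)\nabla u)$) and absorbed using the smallness \eqref{A122} after factoring out the low-frequency-bounded $\dot S_m a$ piece, while the low-frequency part tolerates the $2^{m/2}$ blow-up. The terms involving $\nabla\Pi$ on the right of the elliptic identity must be reabsorbed on the left, which is why the smallness condition \eqref{A122} — stated in terms of $(1+\|a\|_{L_T^\infty(\dot B_{q,1}^{3/q})})^2\|a-\dot S_m a\|$ — carries exactly the square power; tracking that this absorption is consistent, and that all product laws invoked are within the admissible ranges of exponents ($\frac1p+\frac1q\ge\frac12$ for the $-\frac32+\frac3p$ laws, $\frac1p+\frac1q\ge\frac13$ for the $\frac3p$ laws, both guaranteed by the hypotheses), is the delicate part of the argument.
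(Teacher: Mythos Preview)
Your plan has a genuine gap in the velocity estimate. You rewrite the equation as $\partial_t u-\Delta u=\diverg((a-\dot S_m a)\nabla u)+\diverg(\dot S_m a\,\nabla u)-(1+a)\nabla\Pi+f$ and claim that $\diverg(\dot S_m a\,\nabla u)$ is ``spectrally harmless'' with a $2^m$ gain from Bernstein. But only $\dot S_m a$ is frequency-localized, not the product. Writing $\diverg(\dot S_m a\,\nabla u)=\nabla\dot S_m a\cdot\nabla u+\dot S_m a\,\Delta u$, the first piece does admit the bound $2^m\|a\|_{\dot B_{q,1}^{3/q}}\|u\|_{\dot B_{p,1}^{3/p}}$, but the second satisfies only $\|\dot S_m a\,\Delta u\|_{\dot B_{p,1}^{-1+3/p}}\lesssim\|a\|_{\dot B_{q,1}^{3/q}}\|u\|_{\dot B_{p,1}^{1+3/p}}$, with no smallness available to absorb it. Projecting by $\mathbb P$ does not help: for constant $a$ one has $\mathbb P(a\Delta u)=a\Delta u$, so the full second-order term survives. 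The constant-coefficient heat approach therefore cannot close.

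The paper avoids this by keeping the variable-coefficient diffusion on the left. After applying $\dot\Delta_j\mathbb P$ one writes
\[
\partial_t\dot\Delta_j u-\diverg\big((1+\dot S_m a)\dot\Delta_j\nabla u\big)
=\dot\Delta_j\mathbb P(\cdots)+\dot\Delta_j\mathbb Q\big(-\nabla\dot S_m a\cdot\nabla u-\dot S_m a\,\Delta u\big)+\diverg\big([\dot\Delta_j,\dot S_m a]\nabla u\big),
\]
and the smoothing comes from Danchin's localized lower bound (Lemma~8 of \cite{danchin2010}) using $1+\dot S_m a\ge\tfrac\kappa2$. The only second-order remainder on the right is $\mathbb Q(\dot S_m a\,\Delta u)$, and \emph{this} term---unlike its $\mathbb P$-counterpart---does obey the $2^m\|a\|\,\|u\|_{\dot B_{p,1}^{3/p}}$ bound, because $\mathbb Q=-\nabla(-\Delta)^{-1}\diverg$ together with $\diverg\Delta u=0$ transfers the derivative onto $\dot S_m a$ (see the Bony decomposition in \eqref{A126+4}--\eqref{A126+6}). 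The commutator $[\dot\Delta_j,\dot S_m a]\nabla u$ is handled by Lemma~\ref{jiaohuanzi} and also produces the $2^m$ form.

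Your pressure strategy is closer in spirit but misses the key lemma. The paper does not recover $\|\nabla\Pi\|_{L_t^1(\dot B_{p,1}^{-1+3/p})}$ by interpolation or frequency splitting. Rather, it localizes the elliptic equation $\diverg((1+\dot S_m a)\nabla\Pi)=\diverg(\cdots)$ and invokes the new commutator estimate of Lemma~\ref{jiaohuanzi2},
\[
\sum_{j\in\Z}2^{(\frac3p-1)j}\big\|[\dot\Delta_j,\dot S_m a]\nabla\Pi\big\|_{L^p}
\lesssim \|\dot S_m a\|_{\dot B_{q,1}^{\frac3q+\frac12}}\|\nabla\Pi\|_{\dot B_{p,2}^{-\frac32+\frac3p}}
\lesssim 2^{m/2}\|a\|_{\dot B_{q,1}^{3/q}}\|\nabla\Pi\|_{\dot B_{p,2}^{-\frac32+\frac3p}},
\]
which is precisely the origin of the $2^{m/2}$ factor. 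The weak norm $\|\nabla\Pi\|_{\dot B_{p,2}^{-3/2+3/p}}$ on the right is then supplied by Proposition~\ref{tuoyuanguji} applied to the elliptic equation with the \emph{full} coefficient $1+a$ (this is Step~3), and the $(a-\dot S_m a)\nabla\Pi$ error is absorbed via \eqref{A122}. Your low-frequency Cauchy--Schwarz idea would reproduce the $2^{m/2}$ loss, but your description of the high-frequency absorption is too vague to stand on its own; the actual mechanism is exactly this commutator-plus-localization argument.
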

\begin{proof}
We first use the decomposition $\mathrm{Id}=\dot{S}_m+(\mathrm{Id}-\dot{S}_m)$ to turn the $u$ equation of \eqref{Model5} into
\begin{align}\label{A123}
\partial_t u-\mathrm{div}\big((1+\dot{S}_m a)\nabla u\big)+(1+\dot{S}_m a)\nabla\Pi=f+\dot{E}_m,
\end{align}
with $\dot{E}_m\stackrel{\mathrm{def}}{=}\mathrm{div}\big((a-\dot{S}_m a)\nabla u\big)-(a-\dot{S}_m a)\nabla\Pi$. Then we infer from $1+a\geq\kappa>0$ and \eqref{A122} that
\begin{align}\label{A124}
1+\dot{S}_m a\geq\frac12\kappa.
\end{align}

\noindent{\bf Step 1.} The estimate of $\|u\|_{\widetilde{L}_t^\infty(\dot{B}_{p,1}^{-1+\frac3p})}+\|u\|_{L_t^1(\dot{B}_{p,1}^{1+\frac3p})}.$

Applying $\dot{\Delta}_j\mathbb{P}$ to \eqref{A123} , we arrive at
\begin{align}\label{}
\partial_t\dot{\Delta}_j u-\mathrm{div}\big((1+\dot{S}_m a)\dot{\Delta}_j\nabla u\big)
=&\dot{\Delta}_j\mathbb{P}(f+\dot{E}_m-\dot{S}_m a\nabla\Pi)\nonumber\\
&+\dot{\Delta}_j\mathbb{Q}\big(-\nabla \dot{S}_m a\cdot\nabla u-\dot{S}_m a\Delta u\big)
+\mathrm{div}([\dot{\Delta}_j,\dot{S}_m a]\nabla u),
\end{align}
where we have used the fact that:
$$\mathbb{P}\nabla\Pi=0,\quad \mathrm{div}((1+\dot{S}_m a)\nabla u)=\partial_i((1+\dot{S}_m a)\partial_i u_j).$$
Applying Lemma 8 in the appendix of \cite{danchin2010} and using the H$\mathrm{\ddot{o}}$lder inequality, we get for some positive constant $c$ that
\begin{align}\label{}
&\frac{d}{dt}\|\dot{\Delta}_j u\|_{L^p}+c2^{2j}\|\dot{\Delta}_j u\|_{L^p}\nonumber\\
\lesssim&\|\dot{\Delta}_j f\|_{L^p}+\|\dot{\Delta}_j \dot{E}_m\|_{L^p}+\|\dot{\Delta}_j\mathbb{P}(\dot{S}_m a\nabla\Pi)\|_{L^p}
\nonumber\\
&+\|\dot{\Delta}_j(\nabla \dot{S}_m a\cdot\nabla u)\|_{L^p}
+\|\dot{\Delta}_j\mathbb{Q}(\dot{S}_m a\Delta u)\|_{L^p}
+2^j\|[\dot{\Delta}_j,\dot{S}_m a]\nabla u\|_{L^p}.
\end{align}
After time integration, multiplying $2^{(\frac{3}{p}-1)j}$ and summing up over $j$, we infer
\begin{align}\label{A126}
&\|u\|_{\widetilde{L}_t^\infty(\dot{B}_{p,1}^{-1+\frac3p})}+\|u\|_{L_t^1(\dot{B}_{p,1}^{1+\frac3p})}\nonumber\\
\lesssim&\|u_0\|_{\dot{B}_{p,1}^{-1+\frac3p}}+\|f\|_{L_t^1(\dot{B}_{p,1}^{-1+\frac3p})}
+\|\dot{E}_m\|_{L_t^1(\dot{B}_{p,1}^{-1+\frac3p})}+\|\mathbb{P}(\dot{S}_m a\nabla\Pi)\|_{L_t^1(\dot{B}_{p,1}^{-1+\frac3p})}\nonumber\\
&
+\|\nabla \dot{S}_m a\cdot\nabla u\|_{L_t^1(\dot{B}_{p,1}^{-1+\frac3p})}+\|\mathbb{Q}(\dot{S}_m a\Delta u)\|_{L_t^1(\dot{B}_{p,1}^{-1+\frac3p})}
+\sum_{j\in\mathbb{Z}}2^{\frac{3j}{p}}\|[\dot{\Delta}_j,\dot{S}_m a]\nabla u\|_{L_t^1(L^p)}.
\end{align}
Applying Lemmas \ref{daishu} and \ref{product law1}, one has
\begin{align}\label{A126+1}
\|\nabla \dot{S}_m a\cdot\nabla u\|_{L_t^1(\dot{B}_{p,1}^{-1+\frac3p})}
\lesssim&2^{m}\|a\|_{\widetilde{L}_t^\infty(\dot{B}_{q,1}^{\frac{3}{q}})}\|u\|_{L_t^1(\dot{B}_{p,1}^{\frac{3}{p}})},
\end{align}
\begin{align}\label{A126+2}
\|\mathbb{P}(\dot{S}_m a\nabla\Pi)\|_{L_t^1(\dot{B}_{p,1}^{-1+\frac3p})}
\lesssim&2^{\frac m2}\|a\|_{\widetilde{L}_t^\infty(\dot{B}_{q,1}^{\frac{3}{q}})}\|\nabla\Pi\|_{L_t^1(\dot{B}_{p,2}^{-\frac32+\frac3p})},
\end{align}
\begin{align}\label{A126+3}
\|\dot{E}_m\|_{L_t^1(\dot{B}_{p,1}^{-1+\frac3p})}
\lesssim&\|a-\dot{S}_m a\|_{L_t^\infty(\dot{B}_{q,1}^{\frac{3}{q}})}\| u\|_{L_t^1(\dot{B}_{p,1}^{1+\frac{3}{p}})}+\|a-\dot{S}_m a\|_{\widetilde{L}_t^\infty(\dot{B}_{q,1}^{\frac{3}{q}})}\|\nabla\Pi\|_{L_t^1(\dot{B}_{p,1}^{-1+\frac3p})}.
\end{align}
Yet noticing that $\mathbb{Q}=-\nabla(-\Delta)^{-1}\diverg$ and $\diverg u=0$, we get by applying Bony's decomposition that
$$
\mathbb{Q}(\dot{S}_m a\Delta u)=-\nabla(-\Delta)^{-1}\big(\dot{T}_{\nabla \dot{S}_m a}\Delta u\big)
+\mathbb{Q}\big(\dot{T}_{\Delta u}\dot{S}_m a\big)+\mathbb{Q}\big(\dot{R}(\dot{S}_m a,\Delta u)\big).
$$
Then it is easy to get
\begin{align}\label{A126+4}
\|\dot{\Delta}_j\big(\dot{T}_{\nabla \dot{S}_m a}\Delta u\big)\|_{L^p}
\lesssim&\sum_{|j'-j|\leq4}\|\dot{S}_{j'-1}\nabla \dot{S}_m a\|_{L^\infty}\|\dot{\Delta}_{j'}\Delta u\|_{L^p}
\lesssim d_j2^{j(2-\frac3p)+m}\|a\|_{\dot{B}_{q,1}^{\frac{3}{q}}}\|u\|_{\dot{B}_{p,1}^{\frac{3}{p}}},
\end{align}
\begin{align}\label{A126+5}
\|\dot{\Delta}_j\big(\dot{T}_{\Delta u}\dot{S}_m a\big)\|_{L^p}
\lesssim&\sum_{|j'-j|\leq4}\|\dot{\Delta}_{j'}\dot{S}_m a\|_{L^p}\|\dot{S}_{j'-1}\Delta u\|_{L^\infty}\nonumber\\
\lesssim&\sum_{|j'-j|\leq4}2^{3j'(\frac1q-\frac1p)}\|\dot{\Delta}_{j'}\dot{S}_m a\|_{L^q}\|\dot{S}_{j'-1}\Delta u\|_{L^\infty}\nonumber\\
\lesssim &d_j2^{j(1-\frac3p)+m}\|a\|_{\dot{B}_{q,1}^{\frac{3}{q}}}\|u\|_{\dot{B}_{p,1}^{\frac{3}{p}}}.
\end{align}
Let $\frac 1r=\frac1p+\frac1q$, for the fact that $\frac1p+\frac1q\ge \frac13$ then
\begin{align}\label{A126+6}
\|\dot{\Delta}_j\dot{R}(\dot{S}_m a,\Delta u)\|_{L^p}
\lesssim&2^{3j(\frac{1}{r}-\frac1p)}\sum_{j'\geq j-3}\|\dot{\Delta}_{j'}\dot{S}_m a\|_{L^q}\|\widetilde{\dot{\Delta}}_{j'}\Delta u\|_{L^p}\nonumber\\
\lesssim&2^{\frac{3j}{q}}\sum_{j'\geq j-3}\|\dot{\Delta}_{j'}\dot{S}_m a\|_{L^q}\|\widetilde{\dot{\Delta}}_{j'}\Delta u\|_{L^p}\nonumber\\
\lesssim& d_j2^{j(1-\frac3p)+m}\|a\|_{\dot{B}_{q,1}^{\frac{3}{q}}}\|u\|_{\dot{B}_{p,1}^{\frac{3}{p}}}.
\end{align}

Whence we conclude that
\begin{align}\label{A126+7}
\|\mathbb{Q}(\dot{S}_m a\Delta u)\|_{L_t^1(\dot{B}_{p,1}^{-1+\frac3p})}
\lesssim&2^m\|a\|_{\widetilde{L}_t^\infty(\dot{B}_{q,1}^{\frac{3}{q}})}\|u\|_{L_t^1(\dot{B}_{p,1}^{\frac{3}{p}})}
.
\end{align}
While applying Lemma \ref{jiaohuanzi} leads to
\begin{align}\label{A126+8}
\sum_{j\in\mathbb{Z}}2^{\frac{3}{p}j}\|[\dot{\Delta}_j,\dot{S}_m a]\nabla u\|_{L_t^1(L^p)}
\lesssim2^{m}\|a\|_{\widetilde{L}_t^\infty(\dot{B}_{q,1}^{\frac{3}{q}})}\|u\|_{L_t^1(\dot{B}_{p,1}^{\frac{3}{p}})}.
\end{align}
Plugging the above estimates \eqref{A126+1}--\eqref{A126+3}, \eqref{A126+7}, \eqref{A126+8} into \eqref{A126} and using \eqref{A122} yield
\begin{align}\label{A129}
&\|u\|_{\widetilde{L}_t^\infty(\dot{B}_{p,1}^{-1+\frac3p})}+\|u\|_{L_t^1(\dot{B}_{p,1}^{1+\frac3p})}\nonumber\\
\lesssim&\|u_0\|_{\dot{B}_{p,1}^{-1+\frac3p}}+\|f\|_{L_t^1(\dot{B}_{p,1}^{-1+\frac3p})}
+\|a-\dot{S}_m a\|_{\widetilde{L}_t^\infty(\dot{B}_{q,1}^{\frac{3}{q}})}\|\nabla\Pi\|_{L_t^1(\dot{B}_{p,1}^{-1+\frac3p})}\nonumber\\
&+2^{\frac m2}\|a\|_{\widetilde{L}_t^\infty(\dot{B}_{q,1}^{\frac{3}{q}})}\|\nabla\Pi\|_{L_t^1(\dot{B}_{p,2}^{-\frac32+\frac3p})}
+2^{m}\|a\|_{\widetilde{L}_t^\infty(\dot{B}_{q,1}^{\frac{3}{q}})}\|u\|_{L_t^1(\dot{B}_{p,1}^{\frac{3}{p}})}.
\end{align}

\noindent{\bf Step 2.} The estimate of $\|\nabla\Pi\|_{L_t^1(\dot{B}_{p,1}^{-1+\frac3p})}.$

We first get by taking $\mathrm{div}$ to \eqref{A123} that
\begin{align*}
\mathrm{div}\big((1+\dot{S}_m a)\nabla\Pi\big)=\mathrm{div}\big(f+\dot{E}_m+\nabla \dot{S}_m a\cdot\nabla u+\dot{S}_m a\Delta u\big),
\end{align*}
which implies
\begin{align}\label{}
\mathrm{div}\big((1+\dot{S}_m a)\dot{\Delta}_j\nabla\Pi\big)
=&\mathrm{div}\dot{\Delta}_j\big(f+\dot{E}_m+\nabla \dot{S}_m a\cdot\nabla u+\dot{S}_m a\Delta u
\big)-\mathrm{div}([\dot{\Delta}_j,\dot{S}_m a]\nabla\Pi).
\end{align}
A similar argument as  in \eqref{A126} results in
\begin{align}\label{}
\|\nabla\Pi\|_{L_t^1(\dot{B}_{p,1}^{-1+\frac3p})}
\lesssim&\|f\|_{L_t^1(\dot{B}_{p,1}^{-1+\frac3p})}+\|\dot{E}_m\|_{L_t^1(\dot{B}_{p,1}^{-1+\frac3p})}
+\|\nabla \dot{S}_m a\cdot\nabla u\|_{L_t^1(\dot{B}_{p,1}^{-1+\frac3p})}\nonumber\\
&+\|\mathbb{Q}(\dot{S}_m a\Delta u)\|_{L_t^1(\dot{B}_{p,1}^{-1+\frac3p})}
+\sum_{j\in\mathbb{Z}}2^{(\frac{3}{p}-1)j}\|[\dot{\Delta}_j,\dot{S}_m a]\nabla\Pi\|_{L_t^1(L^p)}.
\end{align}
Again thanks to Lemma \ref{jiaohuanzi2}, one has
\begin{align*}
\sum_{j\in\mathbb{Z}}2^{(\frac{3}{p}-1)j}\|[\dot{\Delta}_j,\dot{S}_m a]\nabla\Pi\|_{L_t^1(L^p)}
\lesssim2^{\frac m2}\|a\|_{\widetilde{L}_t^\infty(\dot{B}_{q,1}^{\frac{3}{q}})}\|\nabla\Pi\|_{L_t^1(\dot{B}_{p,2}^{-\frac32+\frac3p})}.
\end{align*}
Whence we obtain
\begin{align}\label{}
\|\nabla\Pi\|_{L_t^1(\dot{B}_{p,1}^{-1+\frac3p})}\lesssim&\|f\|_{L_t^1(\dot{B}_{p,1}^{-1+\frac3p})}
+\|a-\dot{S}_m a\|_{L_t^\infty(\dot{B}_{q,1}^{\frac{3}{q}})}\|\nabla u\|_{L_t^1(\dot{B}_{p,1}^{\frac{3}{p}})}\nonumber\\
&+2^{m}\|a\|_{\widetilde{L}_t^\infty(\dot{B}_{q,1}^{\frac{3}{q}})}\|u\|_{L_t^1(\dot{B}_{p,1}^{\frac{3}{p}})}
+2^{\frac m2}\|a\|_{\widetilde{L}_t^\infty(\dot{B}_{q,1}^{\frac{3}{q}})}\|\nabla\Pi\|_{L_t^1(\dot{B}_{p,2}^{-\frac32+\frac3p})},
\end{align}
which along with \eqref{A122}, \eqref{A129} gives rise to
\begin{align}\label{nengliangguji}
&\|u\|_{\widetilde{L}_t^\infty(\dot{B}_{p,1}^{-1+\frac3p})}+\|u\|_{L_t^1(\dot{B}_{p,1}^{1+\frac3p})}
+\|\nabla\Pi\|_{L_t^1(\dot{B}_{p,1}^{-1+\frac3p})}\nonumber\\
\lesssim&\|u_0\|_{\dot{B}_{p,1}^{-1+\frac3p}}+\|f\|_{L_t^1(\dot{B}_{p,1}^{-1+\frac3p})}
+2^{m}\|a\|_{\widetilde{L}_t^\infty(\dot{B}_{q,1}^{\frac{3}{q}})}\|u\|_{L_t^1(\dot{B}_{p,1}^{\frac{3}{p}})}
+2^{\frac m2}\|a\|_{\widetilde{L}_t^\infty(\dot{B}_{q,1}^{\frac{3}{q}})}\|\nabla\Pi\|_{L_t^1(\dot{B}_{p,2}^{-\frac32+\frac3p})}.
\end{align}

\noindent{\bf Step 3.} The estimate of $\|\nabla\Pi\|_{L_t^1(\dot{B}_{p,2}^{-\frac32+\frac3p})}.$

Applying $\mathrm{div}$ to the first equation of \eqref{Model5} implies that
\begin{align*}
\mathrm{div}\big((1+a)\nabla\Pi\big)=\mathrm{div}(f+\nabla a\cdot\nabla u+a\Delta u).
\end{align*}
Whence we get by applying Proposition \ref{tuoyuanguji} to the above equation that
\begin{align}\label{}
\|\nabla\Pi\|_{L_t^1(\dot{B}_{p,2}^{-\frac32+\frac3p})}\lesssim&\big(1+\|a\|_{\widetilde{L}_t^\infty(\dot{B}_{q,1}^{\frac{3}{q}})}\big)
\big(\|f\|_{L_t^1(\dot{B}_{p,2}^{-\frac32+\frac3p})}+\|\nabla a\cdot\nabla u\|_{L_t^1(\dot{B}_{p,2}^{-\frac32+\frac3p})}+\|\mathbb{Q}(a\Delta u)\|_{L_t^1(\dot{B}_{p,2}^{-\frac32+\frac3p})}\big).
\end{align}
Yet applying Remark \ref{product law1} yields
\begin{align*}
&\|\nabla a\cdot\nabla u\|_{L_t^1(\dot{B}_{p,2}^{-\frac32+\frac3p})}+\|\mathbb{Q}(a\Delta u)\|_{L_t^1(\dot{B}_{p,2}^{-\frac32+\frac3p})}\nonumber\\
\lesssim&\|\nabla a\|_{L_t^\infty(\dot{B}_{q,1}^{-1+\frac{3}{q}})}\|\nabla u\|_{L_t^1(\dot{B}_{p,1}^{-\frac12+\frac{3}{p}})}
+\| a\|_{L_t^\infty(\dot{B}_{q,1}^{\frac{3}{q}})}\|\Delta u\|_{L_t^1(\dot{B}_{p,1}^{-\frac32+\frac3p})}\nonumber\\
\lesssim&\|a\|_{L_t^\infty(\dot{B}_{q,1}^{\frac{3}{q}})}\|u\|_{L_t^1(\dot{B}_{p,1}^{\frac12+\frac{3}{p}})}.
\end{align*}


This gives rise to
\begin{align}\label{}
\|\nabla\Pi\|_{L_t^1(\dot{B}_{p,2}^{-\frac32+\frac3p})}
\lesssim&\big(1+\|a\|_{\widetilde{L}_t^\infty(\dot{B}_{q,1}^{\frac{3}{q}})}\big)\big(\|f\|_{L_t^1(\dot{B}_{p,1}^{-\frac32+\frac3p})}+
\|a\|_{L_t^\infty(\dot{B}_{q,1}^{\frac{3}{q}})}\|u\|_{L_t^1(\dot{B}_{p,1}^{\frac12+\frac{3}{p}})}\big).
\end{align}
Substituting the above inequality into \eqref{nengliangguji} we have
\begin{align}\label{}
&\|u\|_{\widetilde{L}_t^\infty(\dot{B}_{p,1}^{-1+\frac3p})}+\|u\|_{L_t^1(\dot{B}_{p,1}^{1+\frac3p})}
+\|\nabla\Pi\|_{L_t^1(\dot{B}_{p,1}^{-1+\frac3p})}\nonumber\\
\lesssim&\|u_0\|_{\dot{B}_{p,1}^{-1+\frac3p}}+\|f\|_{L_t^1(\dot{B}_{p,1}^{-1+\frac3p})}+2^{m}\|a\|_{\widetilde{L}_t^\infty(\dot{B}_{q,1}^{\frac{3}{q}})}\|u\|_{L_t^1(\dot{B}_{p,1}^{\frac{3}{p}})}
\nonumber\\
&+2^{\frac m2}\big(1+\|a\|_{\widetilde{L}_t^\infty(\dot{B}_{q,1}^{\frac{3}{q}})}\big)^2(\|f\|_{L_t^1(\dot{B}_{p,1}^{-\frac32+\frac3p})}
+\|a\|_{L_t^\infty(\dot{B}_{q,1}^{\frac{3}{q}})}\|u\|_{L_t^1(\dot{B}_{p,1}^{\frac12+\frac{3}{p}})}).
\end{align}

This completes the proof of the proposition.
\end{proof}

Next, we give the linear estimates for the following magnetic field equation:
\begin{eqnarray}\label{cichangxianxing}
\left\{\begin{aligned}
&\partial_t B-\mathrm{div}\big(\widetilde{\sigma}(a)\nabla B\big)+v\cdot\nabla B=g,\\
&\mathrm{div}B=0,\\
&B|_{t=0}=B_0.
\end{aligned}\right.
\end{eqnarray}
\begin{proposition}\label{monimingti}
Assume  $1<p<6$, $1\le q\le \infty$,$\frac1p-\frac1q\le\frac13$, $B_0\in\dot{B}_{p,1}^{-1+\frac3p}(\R^3)$, $\widetilde{\sigma}(a)$ be a smooth, positive function on $[0, \infty),$    and $a\in L_T^\infty(\dot{B}_{q,1}^{\frac{3}{q}}(\R^3))$,
with $1+a\geq\kappa>0$.
Let $g\in L_T^1(\dot{B}_{p,1}^{-1+\frac3p}(\R^3))$,
and $B\in C([0,T];\dot{B}_{p,1}^{-1+\frac3p}(\R^3))\cap L_T^1(\dot{B}_{p,1}^{1+\frac3p}(\R^3))$ solve \eqref{cichangxianxing}.

Then there holds for $t\in[0,T]$
\begin{align}\label{moniguji}
&\|B\|_{\widetilde{L}_t^\infty(\dot{B}_{p,1}^{-1+\frac3p})}+\|B\|_{L_t^1(\dot{B}_{p,1}^{1+\frac3p})}\nonumber\\
\lesssim&\|B_0\|_{\dot{B}_{p,1}^{-1+\frac3p}}+\|g\|_{L_t^1(\dot{B}_{p,1}^{-1+\frac3p})}
+2^{m}\|a\|_{\widetilde{L}_t^\infty(\dot{B}_{q,1}^{\frac{3}{q}})}\|B\|_{L_t^1(\dot{B}_{p,1}^{\frac{3}{p}})}+\int_0^t\| v\|_{\dot{B}_{p,1}^{\frac{3}{p}}}\|B\|_{\dot{B}_{p,1}^{\frac{3}{p}}}d\tau
,
\end{align}
provided that
\begin{align}\label{xiaopingdehao}
\big\|a-\dot{S}_m a\|_{L_T^\infty(\dot{B}_{q,1}^{\frac{3}{q}})}+\|\widetilde{\sigma}(a)-\widetilde{\sigma}(\dot{S}_m a)\|_{L_T^\infty(\dot{B}_{q,1}^{\frac{3}{q}})}
\leq c_0
\end{align}
for some sufficiently small positive constant $c_0$ and some integer $m\in \mathbb{Z}$.
\end{proposition}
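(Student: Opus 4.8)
The plan is to follow the decomposition strategy already used in the proof of Proposition \ref{suduchangxianxing}, adapted to the magnetic equation \eqref{cichangxianxing}. First I would rewrite \eqref{cichangxianxing} by splitting the coefficient $\widetilde{\sigma}(a)$ as $\widetilde{\sigma}(\dot{S}_m a) + (\widetilde{\sigma}(a)-\widetilde{\sigma}(\dot{S}_m a))$, obtaining
\begin{align*}
\partial_t B - \mathrm{div}\big(\widetilde{\sigma}(\dot{S}_m a)\nabla B\big) + v\cdot\nabla B = g + \widetilde{E}_m,
\end{align*}
with $\widetilde{E}_m \stackrel{\mathrm{def}}{=} \mathrm{div}\big((\widetilde{\sigma}(a)-\widetilde{\sigma}(\dot{S}_m a))\nabla B\big)$. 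The smallness hypothesis \eqref{xiaopingdehao} guarantees that $\widetilde{\sigma}(\dot{S}_m a)$ stays bounded below (away from zero) after possibly shrinking $c_0$, since $\widetilde{\sigma}$ is a smooth positive function and $0 < \underline{\sigma} \le \widetilde{\sigma}(a) \le \bar\sigma$ by assumption.

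Next I would apply $\dot{\Delta}_j$ to this equation and move the low-frequency coefficient outside the Laplacian-type operator via a commutator, writing
\begin{align*}
\partial_t \dot{\Delta}_j B - \mathrm{div}\big(\widetilde{\sigma}(\dot{S}_m a)\dot{\Delta}_j\nabla B\big) = \dot{\Delta}_j(g + \widetilde{E}_m - v\cdot\nabla B) + \mathrm{div}\big([\dot{\Delta}_j, \widetilde{\sigma}(\dot{S}_m a)]\nabla B\big).
\end{align*}
Then I would apply the maximal-regularity estimate of Lemma~8 in the appendix of \cite{danchin2010} (the same tool used throughout Section~4), multiply by $|\dot{\Delta}_j B|^{p-2}\dot{\Delta}_j B$, integrate over $\R^3$, and use the H\"older inequality to get a differential inequality for $\|\dot{\Delta}_j B\|_{L^p}$ with a gain of $2^{2j}$. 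After time integration, multiplying by $2^{(\frac3p-1)j}$ and summing over $j\in\Z$, I obtain the $\widetilde{L}_t^\infty(\dot{B}_{p,1}^{-1+\frac3p}) \cap L_t^1(\dot{B}_{p,1}^{1+\frac3p})$ bound in terms of: the data $\|B_0\|_{\dot{B}_{p,1}^{-1+\frac3p}}$, the source $\|g\|_{L_t^1(\dot{B}_{p,1}^{-1+\frac3p})}$, the error term $\|\widetilde{E}_m\|_{L_t^1(\dot{B}_{p,1}^{-1+\frac3p})}$, the transport term $\|v\cdot\nabla B\|_{L_t^1(\dot{B}_{p,1}^{-1+\frac3p})}$, and the commutator sum $\sum_j 2^{\frac{3j}{p}}\|[\dot{\Delta}_j,\widetilde{\sigma}(\dot{S}_m a)]\nabla B\|_{L_t^1(L^p)}$. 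For the error term, the product law in Remark \ref{product law1}(1) combined with the composition estimate for $\widetilde{\sigma}(a)-\widetilde{\sigma}(\dot{S}_m a)$ in $\dot{B}_{q,1}^{\frac3q}$ gives $\|\widetilde{E}_m\|_{L_t^1(\dot{B}_{p,1}^{-1+\frac3p})} \lesssim \|\widetilde{\sigma}(a)-\widetilde{\sigma}(\dot{S}_m a)\|_{L_t^\infty(\dot{B}_{q,1}^{\frac3q})}\|B\|_{L_t^1(\dot{B}_{p,1}^{1+\frac3p})}$, which is absorbed into the left side by \eqref{xiaopingdehao}; the transport term is handled by the product law $\|v\cdot\nabla B\|_{\dot{B}_{p,1}^{-1+\frac3p}} \lesssim \|v\|_{\dot{B}_{p,1}^{\frac3p}}\|B\|_{\dot{B}_{p,1}^{\frac3p}}$ (note $\mathrm{div}\,v = 0$, so $v\cdot\nabla B = \mathrm{div}(v\otimes B)$); and the commutator sum is estimated by Lemma \ref{jiaohuanzi}, yielding the factor $2^m\|a\|_{\widetilde{L}_t^\infty(\dot{B}_{q,1}^{\frac3q})}\|B\|_{L_t^1(\dot{B}_{p,1}^{\frac3p})}$ after also using the composition estimate $\|\nabla\widetilde{\sigma}(\dot{S}_m a)\|_{\dot{B}_{p,1}^{\frac3p}} \lesssim 2^m\|a\|_{\dot{B}_{q,1}^{\frac3q}}$.

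The main obstacle I anticipate is controlling the commutator term $[\dot{\Delta}_j, \widetilde{\sigma}(\dot{S}_m a)]\nabla B$ and the composition estimates for $\widetilde{\sigma}$: since $\widetilde{\sigma}$ is only known to be smooth (not affine), one must invoke a nonlinear composition lemma in Besov spaces to bound $\widetilde{\sigma}(\dot{S}_m a)$ and $\widetilde{\sigma}(a)-\widetilde{\sigma}(\dot{S}_m a)$ in $\dot{B}_{q,1}^{\frac3q}$ in terms of $a$, keeping track of the $2^m$ factors coming from the low-frequency truncation $\dot{S}_m$. The condition $\frac1p - \frac1q \le \frac13$ is exactly what makes the product law $\dot{B}_{q,1}^{\frac3q}\times\dot{B}_{p,1}^{\frac3p}\to\dot{B}_{p,1}^{\frac3p}$ available (Remark \ref{product law1}(1)), so all the bilinear estimates close in the right spaces. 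Since there is no pressure in \eqref{cichangxianxing}, the proof is considerably simpler than that of Proposition \ref{suduchangxianxing}: there is no Step~2/Step~3 pressure loop and hence no need for the elliptic estimate of Proposition \ref{tuoyuanguji} or the sharper commutator Lemma \ref{jiaohuanzi2}. Collecting all bounds and absorbing the small terms via \eqref{xiaopingdehao} yields \eqref{moniguji}.
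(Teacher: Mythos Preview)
Your proposal is correct and follows essentially the same approach as the paper: the same decomposition $\widetilde{\sigma}(a)=\widetilde{\sigma}(\dot{S}_m a)+(\widetilde{\sigma}(a)-\widetilde{\sigma}(\dot{S}_m a))$, the same localization and application of Lemma~8 of \cite{danchin2010} to produce the $\widetilde{L}_t^\infty\cap L_t^1$ bound, and the same treatment of the three remaining terms (transport via the product law, commutator via Lemma~\ref{jiaohuanzi}, error term via Lemma~\ref{daishu}/Remark~\ref{product law1} absorbed by \eqref{xiaopingdehao}). Your remark that the composition estimates for $\widetilde{\sigma}$ are the only subtlety is apt; the paper handles this implicitly in the same way.
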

\begin{proof}
We may rewrite the system \eqref{cichangxianxing}, after decomposing $\widetilde{\sigma}(a)=\widetilde{\sigma}(\dot{S}_m a)+\widetilde{\sigma}(a)-\widetilde{\sigma}(\dot{S}_m a)$, that
\begin{align}\label{Modeeel5}
\partial_t B-\mathrm{div}\big((\dot{S}_m a)\nabla B\big)+v\cdot\nabla B+\mathrm{div}((\widetilde{\sigma}(a)-\widetilde{\sigma}(\dot{S}_m a))\nabla B)=g.
\end{align}
Applying the operator $\dot{\Delta}_j$ to \eqref{Modeeel5}, using a standard commutator's process , we get
\begin{align*}
&\partial_t\dot{\Delta}_j B-\mathrm{div}\big((\widetilde{\sigma}(\dot{S}_m a))\dot{\Delta}_j\nabla B\big)\nonumber\\
&\quad=\dot{\Delta}_jg+\dot{\Delta}_j( v\cdot\nabla B)+\dot{\Delta}_j\mathrm{div}((\widetilde{\sigma}(a)-\widetilde{\sigma}(\dot{S}_m a))\nabla B)+\mathrm{div}([\dot{\Delta}_j,\widetilde{\sigma}(\dot{S}_m a)]\nabla B).
\end{align*}
Noticing  that $\widetilde{\sigma}(a)-\widetilde{\sigma}(\dot{S}_m a)$ is small enough in norm ${L_T^\infty(\dot{B}_{q,1}^{\frac{3}{q}})}$, it follows that $\widetilde{\sigma}(\dot{S}_m a)\ge \frac{\kappa}{2}$. Taking $L^2$ inner product
with $|\dot{\Delta}_jB|^{p-2}\dot{\Delta}_jB$ and applying Lemma 8 in the appendix of \cite{danchin2010}, we get
\begin{align}\label{yan}
&\|B\|_{\widetilde{L}_t^\infty(\dot{B}_{p,1}^{-1+\frac3p})}+\|B\|_{L_t^1(\dot{B}_{p,1}^{1+\frac3p})}\nonumber\\
\lesssim&\|B_0\|_{\dot{B}_{p,1}^{-1+\frac3p}}+\|g\|_{L_t^1(\dot{B}_{p,1}^{-1+\frac3p})}
+\|v\cdot\nabla B\|_{L_t^1(\dot{B}_{p,1}^{-1+\frac3p})}\nonumber\\
&
+\|\mathrm{div}((\widetilde{\sigma}(a)-\widetilde{\sigma}(\dot{S}_m a))\nabla B)\|_{L_t^1(\dot{B}_{p,1}^{-1+\frac3p})}
+\sum_{j\in\mathbb{Z}}2^{\frac{3j}{p}}\|[\dot{\Delta}_j,\widetilde{\sigma}(\dot{S}_m a)]\nabla u\|_{L_t^1(L^p)}.
\end{align}
By Lemma \ref{jiaohuanzi}, one has
\begin{align}\label{yebo1}
\|v\cdot\nabla B\|_{L_t^1(\dot{B}_{p,1}^{-1+\frac3p})}\lesssim\int_0^t\|v\|_{\dot{B}_{p,1}^{\frac{3}{p}}}\|B\|_{\dot{B}_{p,1}^{\frac{3}{p}}}d\tau,
\end{align}
\begin{align}\label{yebo2}
\sum_{j\in\mathbb{Z}}2^{\frac{3j}{p}}\|[\dot{\Delta}_j,\widetilde{\sigma}(\dot{S}_m a)]\nabla B\|_{L_t^1(L^p)}
\lesssim2^{m}\|a\|_{\widetilde{L}_t^\infty(\dot{B}_{q,1}^{\frac{3}{q}})}\|B\|_{L_t^1(\dot{B}_{p,1}^{\frac{3}{p}})}.
\end{align}
By Lemma \ref{daishu}, we have
\begin{align}\label{yebo3}
\|\mathrm{div}((\widetilde{\sigma}(a)-\widetilde{\sigma}(\dot{S}_m a))\nabla B)\|_{L_t^1(\dot{B}_{p,1}^{-1+\frac3p})}
\lesssim&\|\widetilde{\sigma}(a)-\widetilde{\sigma}(\dot{S}_m a)\|_{L_t^\infty(\dot{B}_{q,1}^{\frac{3}{q}})}\| B\|_{L_t^1(\dot{B}_{p,1}^{1+\frac{3}{p}})}.
\end{align}
Inserting the above estimates \eqref{yebo1}--\eqref{yebo3} into \eqref{yan}, one can finally get
\begin{align}\label{}
&\|B\|_{\widetilde{L}_t^\infty(\dot{B}_{p,1}^{-1+\frac3p})}+\|B\|_{L_t^1(\dot{B}_{p,1}^{1+\frac3p})}\nonumber\\
\lesssim&\|B_0\|_{\dot{B}_{p,1}^{-1+\frac3p}}+\|g\|_{L_t^1(\dot{B}_{p,1}^{-1+\frac3p})}
+2^{m}\|a\|_{\widetilde{L}_t^\infty(\dot{B}_{q,1}^{\frac{3}{q}})}\|B\|_{L_t^1(\dot{B}_{p,1}^{\frac{3}{p}})}+\int_0^t\| v\|_{\dot{B}_{p,1}^{\frac{3}{p}}}\|B\|_{\dot{B}_{p,1}^{\frac{3}{p}}}d\tau
.
\end{align}
This completes the proof of the proposition.
\end{proof}

\section{\bf Local well-posedness of Theorem \ref{zhuyaodingli}}

\subsection{Local existence}
\noindent{\bf Step 1.} Construction of smooth approximate solutions.

Firstly, there exists a sequence $\{(a_0^n,\widetilde{u}_0^n,\widetilde{B}_0^n)\}_{n\in\mathbb{N}}\subset\mathscr{S}(\R^3)$
such that $(a_0^n,\widetilde{u}_0^n,\widetilde{B}_0^n)$ converges to $(a_0,u_0,B_0)$
in $\dot{B}_{q,1}^{\frac3q}(\R^3)\times\big(\dot{B}_{p,1}^{-1+\frac3p}(\R^3)\big)^2$.
Define $u_0^n\stackrel{\mathrm{def}}{=}\mathbb{P}\widetilde{u}_0^n$, $B_0^n\stackrel{\mathrm{def}}{=}\mathbb{P}\widetilde{B}_0^n$, so that $\mathrm{div} u_0^n=\mathrm{div} B_0^n=0$.
Then $(u_0^n,B_0^n)$ belongs to $H^\infty(\R^3)\times H^\infty(\R^3)$ and converges to $(u_0,B_0)$ in $\dot{B}_{p,1}^{-1+\frac3p}(\R^3)\times \dot{B}_{p,1}^{-1+\frac3p}(\R^3)$.
Furthermore, we could assume that
\begin{align}\label{ABA1}
\|a_0^n\|_{L^\infty}\leq2\|a_0\|_{L^\infty},\quad\|a_0^n\|_{\dot{B}_{q,1}^{\frac3q}}\leq2\|a_0\|_{\dot{B}_{q,1}^{\frac3q}},\quad
\|u_0^n\|_{\dot{B}_{p,1}^{-1+\frac3p}}\leq&2\|u_0\|_{\dot{B}_{p,1}^{-1+\frac3p}},\quad
\|B_0^n\|_{\dot{B}_{p,1}^{-1+\frac3p}}\leq&2\|B_0\|_{\dot{B}_{p,1}^{-1+\frac3p}},
\end{align}
and
\begin{align}\label{ABA2}
1+a_0^n=1+a_0+(a_0^n-a_0)\geq\frac12\kappa.
\end{align}
Therefor, applying Theorem 1.1 of \cite{abidi2007} ensures that the MHD system \eqref{mhdmoxing} with the initial data
$(a_0^n,u_0^n,B_0^n)$ admits a unique local in time solution $(a^n,u^n,B^n,\nabla\Pi^n)$ belonging to
\begin{align}\label{ABA3}
C([0,T^n);H^{\alpha+1}(\R^3))&\times C([0,T^n);H^{\alpha}(\R^3))\cap\widetilde{L}_{loc}^1(0,T^n;H^{\alpha+2}(\R^3))\nonumber\\
&\times C([0,T^n);H^{\alpha}(\R^3))\cap\widetilde{L}_{loc}^1(0,T^n;H^{\alpha+2}(\R^3))\times\widetilde{L}_{loc}^1(0,T^n;H^{\alpha}(\R^3))
\end{align}
with $\alpha>\frac12$. Moreover, from the transport equation of \eqref{mhdmoxing}, we deduce that
\begin{align}\label{ABA4}
\|a^n(t)\|_{L^\infty}=\|a_0^n\|_{L^\infty}\leq2\|a_0\|_{L^\infty},\ \ \forall t\in[0,T^n),
\end{align}
\begin{align}\label{ABA5}
1+\inf_{(t,x)\in[0,T^n)\times\R^3}a^n(t,x)=1+\inf_{y\in\R^3}a_0^n(y)\geq\frac12\kappa.
\end{align}
\noindent{\bf Step 2.} Uniform estimates to the approximate solutions.

Next let us turn to the uniform estimates for the approximate solutions consequences, that is, we shall prove that there exists a positive time $T<\inf_{n\in\mathbb{N}}T^n$
such that $(a^n,u^n,B^n,\nabla\Pi^n)$ is uniformly bounded in the space
$$
E_T\stackrel{\mathrm{def}}{=}\widetilde{L}_T^\infty(\dot{B}_{q,1}^{\frac3q})
\times\widetilde{L}_T^\infty(\dot{B}_{p,1}^{-1+\frac3p})\cap L_T^1(\dot{B}_{p,1}^{1+\frac3p})\times\widetilde{L}_T^\infty(\dot{B}_{p,1}^{-1+\frac3p})\cap L_T^1(\dot{B}_{p,1}^{1+\frac3p})
\times L_T^1(\dot{B}_{p,1}^{-1+\frac3p}).
$$

For this, let
$\big(u_F(t),u_F^n(t)\big)
\stackrel{\mathrm{def}}{=}\big(e^{ t\Delta}u_0,e^{ t\Delta}u_0^n\big)$,
$\big(B_F(t),B_F^n(t)\big)
\stackrel{\mathrm{def}}{=}\big(e^{\sigma t\Delta}B_0,e^{\sigma t\Delta}B_0^n\big)$
with $\sigma\stackrel{\mathrm{def}}{=}\widetilde{\sigma}(0)$. Then it is easy to observe that
\begin{align}\label{ABA6}
\|(u_F^n,B^n_F)\|_{\widetilde{L}^\infty(\R^+;\dot{B}_{p,1}^{-1+\frac3p})}
+\|( u_F^n,\sigma B^n_F)\|_{L^1(\R^+;\dot{B}_{p,1}^{1+\frac3p})}
\lesssim\|(u_0^n,B_0^n)\|_{\dot{B}_{p,1}^{-1+\frac3p}}\lesssim\|(u_0,B_0)\|_{\dot{B}_{p,1}^{-1+\frac3p}},
\end{align}
and
\begin{align}\label{ABA8}
\|u_F^n\|_{L_T^1(\dot{B}_{p,1}^{1+\frac3p})}
\lesssim&\|u_F\|_{L_T^1(\dot{B}_{p,1}^{1+\frac3p})}+\|u_F^n-u_F\|_{L_T^1(\dot{B}_{p,1}^{1+\frac3p})}\nonumber\\
\lesssim&\|u_F\|_{L_T^1(\dot{B}_{p,1}^{1+\frac3p})}+\|u_0^n-u_0\|_{\dot{B}_{p,1}^{-1+\frac3p}},
\end{align}
\begin{align}\label{ABA10}
\|B_F^n\|_{L_T^1(\dot{B}_{p,1}^{1+\frac3p})}
\lesssim&\|B_F\|_{L_T^1(\dot{B}_{p,1}^{1+\frac3p})}+\|B_F^n-B_F\|_{L_T^1(\dot{B}_{p,1}^{1+\frac3p})}\nonumber\\
\lesssim&\|B_F\|_{L_T^1(\dot{B}_{p,1}^{1+\frac3p})}+\|B_0^n-B_0\|_{\dot{B}_{p,1}^{-1+\frac3p}}.
\end{align}
Thus, for any $\varepsilon>0$, there exist a number $k=k(\varepsilon)\in\mathbb{N}$
and a positive time $T=T(\varepsilon,u_0)$ such that
\begin{align}\label{eq3}
\sup_{n\geq k}\|(u_F^n,B_F^n)\|_{L_{T}^1(\dot{B}_{p,1}^{1+\frac3p})}\leq\varepsilon.
\end{align}

Denote by $\bar{u}^n\stackrel{\mathrm{def}}{=}u^n-u_F^n$, $\bar{B}^n\stackrel{\mathrm{def}}{=}B^n-B_F^n$. Then the system for $(a^n,\bar{u}^n,\bar{B}^n,\nabla\Pi^n)$ reads
\begin{eqnarray}\label{bijinfangcheng}
\left\{\begin{aligned}
&\partial_t a^n+(u_F^n+\bar{u}^n)\cdot\nabla a^n =0,\\
&\partial_t\bar{u}^n-\diverg((1+a^n)\nabla \bar{u}^n) +(1+a^n)\nabla \Pi^n=F_n,\\
&\partial_t\bar{B}^n-\diverg\big(\widetilde{\sigma}(a^n)\nabla \bar{B}^n\big)+u_F^n \cdot \nabla \bar{B}^n=G_n,\\
&\mathrm{div}\bar{u}^n =\bar{B}^n=0,\\
&(a^n,\bar{u}^n,\bar{B}^n)|_{t=0}=(a_0^n,0,0),
\end{aligned}\right.
\end{eqnarray}
where
\begin{align*}
F_n=&-\nabla a^n\cdot\nabla \bar{u}^n-u_F^n\cdot \nabla \bar{u}^n-u_F^n\cdot \nabla u_F^n-\bar{u}^n\cdot \nabla u_F^n-\bar{u}^n\cdot \nabla \bar{u}^n
\\&+a^n\Delta u_F^n+(1+a^n)(B_F^n\cdot \nabla B_F^n+B_F^n\cdot \nabla \bar{B}^n+\bar{B}^n\cdot \nabla B_F^n+\bar{B}^n\cdot \nabla \bar{B}^n),
\end{align*}
\begin{align*}
G_n=&\bar{B}^n\cdot \nabla u_F^n-\bar{u}^n\cdot \nabla \bar{B}^n+\bar{B}^n\cdot \nabla \bar{u}^n-u_F^n\cdot \nabla B_F^n\\
&-\bar{u}^n\cdot \nabla B_F^n+B_F^n\cdot \nabla u_F^n+B_F^n\cdot \nabla \bar{u}^n+\diverg\big((\widetilde{\sigma}(a^n)-\widetilde{\sigma}(0))\nabla B_F^n\big).
\end{align*}
For notational simplicity, we denote by $A^n(t)\stackrel{\mathrm{def}}{=}\|a^n\|_{\widetilde{L}_t^\infty(\dot{B}_{q,1}^{\frac3q})}$ and
$$
Z^n(t)\stackrel{\mathrm{def}}{=}\|(\bar{u}^n,\bar{B}^n)\|_{\widetilde{L}_t^\infty(\dot{B}_{p,1}^{-1+\frac3p})}
+\|(\bar{u}^n,\bar{B}^n,\nabla\Pi^n)\|_{L_t^1(\dot{B}_{p,1}^{1+\frac3p})}+\|\nabla\Pi^n\|_{L_t^1(\dot{B}_{p,1}^{-1+\frac3p})}.
$$
By Propositions  \ref{suduchangxianxing}, \ref{monimingti}, we have
\begin{align}\label{ABA12}
Z^n(t)
\lesssim&\|(F_n,G_n)\|_{L_t^1(\dot{B}_{p,1}^{-1+\frac3p})}
+2^{\frac m2}\big(1+\|a^n\|_{\widetilde{L}_t^\infty(\dot{B}_{q,1}^{\frac{3}{q}})}\big)^2\|F_n\|_{L_t^1(\dot{B}_{p,1}^{-\frac32+\frac3p})}
\nonumber\\
&+2^{m}\|a^n\|_{\widetilde{L}_t^\infty(\dot{B}_{q,1}^{\frac{3}{q}})}\|(\bar{u}^n,\bar{B}^n)\|_{L_t^1(\dot{B}_{p,1}^{\frac{3}{p}})}+\int_0^t\| u_F^n\|_{\dot{B}_{p,1}^{\frac{3}{p}}}\|\bar{B}^n\|_{\dot{B}_{p,1}^{\frac{3}{p}}}d\tau\nonumber\\
&+2^{\frac m2}\big(1+\|a^n\|_{\widetilde{L}_t^\infty(\dot{B}_{q,1}^{\frac{3}{q}})}\big)^3
\|\bar{u}^n\|_{L_t^1(\dot{B}_{p,1}^{\frac12+\frac{3}{p}})}.
\end{align}
According to  $\mathrm{Id}=\dot{S}_m+(\mathrm{Id}-\dot{S}_m)$ and  Lemma \ref{daishu}, one has
\begin{align}\label{ABA13}
\|\nabla a^n\cdot\nabla \bar{u}^n\|_{L_t^1(\dot{B}_{p,1}^{-1+\frac3p})}
\lesssim&\|\nabla \dot{S}_ma^n\cdot\nabla \bar{u}^n\|_{L_t^1(\dot{B}_{p,1}^{-1+\frac3p})}+\|\nabla(a^n- \dot{S}_ma^n)\cdot\nabla \bar{u}^n\|_{L_t^1(\dot{B}_{p,1}^{-1+\frac3p})}\nonumber\\
\lesssim&2^{m}\|a^n\|_{\widetilde{L}_t^\infty(\dot{B}_{q,1}^{\frac{3}{q}})}\|\bar{u}^n\|_{L_t^1(\dot{B}_{p,1}^{\frac{3}{p}})}
+\|a^n-\dot{S}_ma^n\|_{\widetilde{L}_t^\infty(\dot{B}_{q,1}^{\frac{3}{q}})}\|\bar{u}^n\|_{L_t^1(\dot{B}_{p,1}^{1+\frac{3}{p}})}.
\end{align}
By Remark \ref{product law1}, we have
\begin{align}\label{ABA14}
&\|\nabla a^n\cdot\nabla \bar{u}^n\|_{L_t^1(\dot{B}_{p,2}^{-\frac32+\frac3p})}\nonumber\\
\lesssim&\|\nabla \dot{S}_ma^n\cdot\nabla \bar{u}^n\|_{L_t^1(\dot{B}_{p,2}^{-\frac32+\frac3p})}+\|\nabla(a^n- \dot{S}_ma^n)\cdot\nabla \bar{u}^n\|_{L_t^1(\dot{B}_{p,2}^{-\frac32+\frac3p})}\nonumber\\
\lesssim&\|\nabla \dot{S}_ma^n\|_{L_t^\infty(\dot{B}_{q,1}^{-1+\frac{3}{q}})}\|\nabla \bar{u}^n\|_{L_t^1(\dot{B}_{p,1}^{-\frac12+\frac{3}{p}})}
+\|\nabla(a^n- \dot{S}_ma^n)
\|_{L_t^\infty(\dot{B}_{q,1}^{-1+\frac{3}{q}})}\|\nabla \bar{u}^n\|_{L_t^1(\dot{B}_{p,1}^{-\frac12+\frac{3}{p}})}
\nonumber\\
\lesssim&t^{\frac14}\|a^n\|_{\widetilde{L}_t^\infty(\dot{B}_{q,1}^{\frac{3}{q}})}\|\bar{u}^n\|_{L_t^\infty(\dot{B}_{p,1}^{-1+\frac3p})}^{\frac14}
\|\bar{u}^n\|_{L_t^1(\dot{B}_{p,1}^{1+\frac3p})}^{\frac34}+\|a^n-\dot{S}_ma^n\|_{\widetilde{L}_t^\infty(\dot{B}_{q,1}^{\frac{3}{q}})}
\|\bar{u}^n\|_{L_t^1(\dot{B}_{p,1}^{\frac12+\frac{3}{p}})},
\end{align}
in which we have used the following interpolation inequality:
\begin{align*}
\|\bar{u}^n\|_{L_t^1(\dot{B}_{p,1}^{\frac12+\frac{3}{p}})}\le t^{\frac14}\|\bar{u}^n\|_{L_t^\infty(\dot{B}_{p,1}^{-1+\frac3p})}^{\frac14}
\|\bar{u}^n\|_{L_t^1(\dot{B}_{p,1}^{1+\frac3p})}^{\frac34}.
\end{align*}
Similarly, we can get
\begin{align}\label{ABA17}
\|a^n\Delta u_F^n\|_{L_t^1(\dot{B}_{p,1}^{-1+\frac3p})}+\|a^n\Delta u_F^n\|_{L_t^1(\dot{B}_{p,2}^{-\frac32+\frac3p})}
\lesssim&\|a^n\|_{\widetilde{L}_t^\infty(\dot{B}_{q,1}^{\frac{3}{q}})}(\|u_F^n\|_{L_t^1(\dot{B}_{p,1}^{1+\frac3p})}+\|u_F^n\|_{L_t^1(\dot{B}_{p,1}^{\frac3p+\frac12})}).
\end{align}

Yet thanks to $\mathrm{div}u_F^n=\mathrm{div}\bar{u}^n=0$, we get by using product laws and interpolation inequality in Besov spaces that
\begin{align}\label{ABA18}
&\|\bar{u}^n\cdot\nabla \bar{u}^n+u_F^n\cdot\nabla u_F^n+\bar{u}^n\cdot\nabla u_F^n+u_F^n\cdot\nabla \bar{u}^n\|_{L_t^1(\dot{B}_{p,1}^{-1+\frac3p})}\nonumber\\
\lesssim&\|\bar{u}^n\otimes \bar{u}^n+u_F^n\otimes u_F^n+\bar{u}^n\otimes u_F^n+u_F^n\otimes\bar{u}^n\|_{L_t^1(\dot{B}_{p,1}^{\frac3p})}\nonumber\\
\lesssim&\int_0^t\|\bar{u}^n\|_{\dot{B}_{p,1}^{\frac3p}}\|\bar{u}^n\|_{\dot{B}_{p,1}^{\frac3p}}
+\|u_F^n\|_{\dot{B}_{p,1}^{\frac3p}}\|u_F^n\|_{\dot{B}_{p,1}^{\frac3p}}
+\|\bar{u}^n\|_{\dot{B}_{p,1}^{\frac3p}}\|u_F^n\|_{\dot{B}_{p,1}^{\frac3p}}
d\tau\nonumber\\
\lesssim&\int_0^t
\|\bar{u}^n\|_{\dot{B}_{p,1}^{-1+\frac3p}}\|\bar{u}^n\|_{\dot{B}_{p,1}^{1+\frac3p}}+\|u_F^n\|_{\dot{B}_{p,1}^{-1+\frac3p}}\|u_F^n\|_{\dot{B}_{p,1}^{1+\frac3p}}
d\tau\nonumber\\
\lesssim&(Z^n(t))^2+\|u_0\|_{\dot{B}_{p,1}^{-1+\frac3p}}\|u_F^n\|_{L_t^1(\dot{B}_{p,1}^{1+\frac3p})}.
\end{align}
Similarly,
\begin{align}\label{ABA21}
\|\diverg\big((\widetilde{\sigma}(a^n)-\widetilde{\sigma}(0))\nabla B_F^n\big)\|_{L_t^1(\dot{B}_{p,1}^{-1+\frac3p})}
\lesssim\|a^n\|_{\widetilde{L}_t^\infty(\dot{B}_{q,1}^{\frac{3}{q}})}\|B_F^n\|_{L_t^1(\dot{B}_{p,1}^{1+\frac3p})},
\end{align}
\begin{align}\label{ABA19}
&\|(1+a^n)(\bar{B}^n\cdot\nabla \bar{B}^n+B_F^n\cdot\nabla B_F^n+\bar{B}^n\cdot\nabla B_F^n+B_F^n\cdot\nabla \bar{B}^n)\|_{L_t^1(\dot{B}_{p,1}^{-1+\frac3p})}\nonumber\\
\lesssim&\big(1+\|a^n\|_{\widetilde{L}_t^\infty(\dot{B}_{q,1}^{\frac{3}{q}})}\big)
((Z^n(t))^2+\|B_0\|_{\dot{B}_{p,1}^{-1+\frac3p}}\|u_F^n\|_{L_t^1(\dot{B}_{p,1}^{1+\frac3p})}),
\end{align}
\begin{align}\label{ABA22}
&\|\bar{B}^n\cdot \nabla u_F^n-\bar{u}^n\cdot \nabla \bar{B}^n+\bar{B}^n\cdot \nabla \bar{u}^n-u_F^n\cdot \nabla B_F^n-\bar{u}^n\cdot \nabla B_F^n+B_F^n\cdot \nabla u_F^n+B_F^n\cdot \nabla \bar{u}^n\|_{L_t^1(\dot{B}_{p,1}^{-1+\frac3p})}\nonumber\\
\lesssim&(Z^n(t))^2+\|(u_0,B_0)\|_{\dot{B}_{p,1}^{-1+\frac3p}}\|(u_F^n,B_F^n)\|_{L_t^1(\dot{B}_{p,1}^{1+\frac3p})}.
\end{align}
Thus
\begin{align}\label{ABA23}
\|G_n\|_{L_t^1(\dot{B}_{p,1}^{-1+\frac3p})}
\lesssim(Z^n(t))^2+\|(u_0,B_0)\|_{\dot{B}_{p,1}^{-1+\frac3p}}\|(u_F^n,B_F^n)\|_{L_t^1(\dot{B}_{p,1}^{1+\frac3p})}+\|a^n\|_{\widetilde{L}_t^\infty(\dot{B}_{q,1}^{\frac{3}{q}})}\|B_F^n\|_{L_t^1(\dot{B}_{p,1}^{1+\frac3p})}.
\end{align}
By Lemma \ref{daishu} and interpolation inequality in Chemin-Lerner spaces, we have
\begin{align}\label{ABA24}
&\|\bar{u}^n\cdot\nabla \bar{u}^n+u_F^n\cdot\nabla u_F^n+\bar{u}^n\cdot\nabla u_F^n+u_F^n\cdot\nabla \bar{u}^n\|_{L_t^1(\dot{B}_{p,1}^{-\frac32+\frac3p})}\nonumber\\
\lesssim&\int_0^t\|\bar{u}^n\otimes\bar{u}^n\|_{\dot{B}_{p,1}^{-\frac12+\frac3p}}
+\|u_F^n\otimes u_F^n\|_{\dot{B}_{p,1}^{-\frac12+\frac3p}}+\|\bar{u}^n\otimes u_F^n\|_{\dot{B}_{p,1}^{-\frac12+\frac3p}}
+\|u_F^n\otimes \bar{u}^n\|_{\dot{B}_{p,1}^{-\frac12+\frac3p}}
d\tau\nonumber\\
\lesssim&\int_0^t\|\bar{u}^n\|_{\dot{B}_{p,1}^{-\frac12+\frac3p}}\|\bar{u}^n\|_{\dot{B}_{p,1}^{\frac3p}}
+\|u_F^n\|_{\dot{B}_{p,1}^{-\frac12+\frac3p}}\|u_F^n\|_{\dot{B}_{p,1}^{\frac3p}}+\|\bar{u}^n\|_{\dot{B}_{p,1}^{-\frac12+\frac3p}}\|u_F^n\|_{\dot{B}_{p,1}^{\frac3p}}
d\tau\nonumber\\
\lesssim&\int_0^t\|\bar{u}^n\|_{\dot{B}_{p,1}^{-1+\frac3p}}^{\frac32}\|\bar{u}^n\|_{\dot{B}_{p,1}^{1+\frac3p}}^{\frac12}
+\|u_F^n\|_{\dot{B}_{p,1}^{-1+\frac3p}}^{\frac32}\|u_F^n\|_{\dot{B}_{p,1}^{1+\frac3p}}^{\frac12}d\tau
+\int_0^t\|u_F^n\|_{\dot{B}_{p,1}^{-1+\frac3p}}\|u_F^n\|_{\dot{B}_{p,1}^{1+\frac3p}}
+\|\bar{u}^n\|_{\dot{B}_{p,1}^{-1+\frac3p}}\|\bar{u}^n\|_{\dot{B}_{p,1}^{1+\frac3p}}d\tau\nonumber\\
\lesssim& t^{\frac12}((Z^n(t))^2+\|u_0\|_{\dot{B}_{p,1}^{-1+\frac3p}})+\|u_0\|_{\dot{B}_{p,1}^{-1+\frac3p}}
\|u_F^n\|_{L_t^1(\dot{B}_{p,1}^{1+\frac3p})}+(Z^n(t))^2.
\end{align}

Similarly,
\begin{align}\label{ABA25}
&\|(1+a^n)(\bar{B}^n\cdot\nabla \bar{B}^n+B_F^n\cdot\nabla B_F^n+\bar{B}^n\cdot\nabla B_F^n+B_F^n\cdot\nabla \bar{B}^n)\|_{L_t^1(\dot{B}_{p,1}^{-\frac32+\frac3p})}\nonumber\\
\lesssim& \big(1+\|a^n\|_{\widetilde{L}_t^\infty(\dot{B}_{q,1}^{\frac{3}{q}})}\big)\left(t^{\frac12}((Z^n(t))^2+\|B_0\|_{\dot{B}_{p,1}^{-1+\frac3p}})+\|B_0\|_{\dot{B}_{p,1}^{-1+\frac3p}}
\|B_F^n\|_{L_t^1(\dot{B}_{p,1}^{1+\frac3p})}+(Z^n(t))^2\right).
\end{align}

As a consequence, we obtain
\begin{align}\label{ABA26}
\|(F_n,G_n)\|_{L_t^1(\dot{B}_{p,1}^{-1+\frac3p})}\lesssim&
2^{m}\|a^n\|_{\widetilde{L}_t^\infty(\dot{B}_{q,1}^{\frac{3}{q}})}\|\bar{u}^n\|_{L_t^1(\dot{B}_{p,1}^{\frac{3}{p}})}
+\|a^n-\dot{S}_ma^n\|_{\widetilde{L}_t^\infty(\dot{B}_{q,1}^{\frac{3}{q}})}(\|\bar{u}^n\|_{L_t^1(\dot{B}_{p,1}^{1+\frac{3}{p}})}+\|\bar{u}^n\|_{L_t^1(\dot{B}_{p,1}^{\frac12+\frac{3}{p}})})\nonumber\\
&+\big(1+\|a^n\|_{\widetilde{L}_t^\infty(\dot{B}_{q,1}^{\frac{3}{q}})}\big)((Z ^n(t))^2+\|(u_0,B_0)\|_{\dot{B}_{p,1}^{-1+\frac3p}}\|u_F^n\|_{L_t^1(\dot{B}_{p,1}^{1+\frac3p})}),
\nonumber\\
\lesssim&\varepsilon Z^n(t)+\|a^n-\dot{S}_ma^n\|_{\widetilde{L}_t^\infty(\dot{B}_{q,1}^{\frac{3}{q}})}Z^n(t)\nonumber\\
&+(1+2^{2m}(A^n(t))^2)(Z^n(t))^2+(1+A^n(t))\|(u_0,B_0)\|_{\dot{B}_{p,1}^{-1+\frac3p}})\|(u_F^n,B_F^n)\|_{L_t^1(\dot{B}_{p,1}^{1+\frac3p})},
\end{align}
\begin{align}\label{ABA27}
\|F_n\|_{L_t^1(\dot{B}_{p,1}^{-\frac32+\frac3p})}\lesssim&\varepsilon Z^n(t)+\|a^n-\dot{S}_ma^n\|_{\widetilde{L}_t^\infty(\dot{B}_{q,1}^{\frac{3}{q}})}Z^n(t)+t^{\frac14}A^n(t)Z^n(t)
\nonumber\\
&+
A^n(t)\|u_F^n\|_{L_t^1(\dot{B}_{p,1}^{\frac3p+\frac12})}+
\big(1+A^n(t)\big)t^{\frac12}((Z^n(t))^2
+\|(u_0,B_0)\|_{\dot{B}_{p,1}^{-1+\frac3p}})
\nonumber\\
&
+\big(1+A^n(t)\big)t^{\frac12}(\|(u_0,B_0)\|_{\dot{B}_{p,1}^{-1+\frac3p}}
\|(u_F^n,B^n_F)\|_{L_t^1(\dot{B}_{p,1}^{1+\frac3p})}+(Z^n(t))^2).
\end{align}
Choosing $\varepsilon$ small enough, using the Young inequality and
\begin{align}\label{ABA28}
\big(1+\|a\|_{L_T^\infty(\dot{B}_{q,1}^{\frac{3}{q}})}\big)^2\|a-\dot{S}_m a\|_{L_T^\infty(\dot{B}_{q,1}^{\frac{3}{q}})}
\leq c_0,
\end{align}
we have
\begin{align}\label{ABA29}
Z^n(t)
\lesssim&(1+2^{2m}(A^n(t))^2)(Z^n(t))^2\nonumber\\
&+2^{\frac m2}\big(1+A^n(t)\big)^4\Bigg[t^{\frac14}A^n(t)Z^n(t)
+
A^n(t)\|u_F^n\|_{L_t^1(\dot{B}_{p,1}^{\frac3p+\frac12})}
\nonumber\\
&+
\big(1+A^n(t)\big)\Big(t^{\frac12}((Z^n(t))^2
+\|(u_0,B_0)\|_{\dot{B}_{p,1}^{-1+\frac3p}})+\|(u_0,B_0)\|_{\dot{B}_{p,1}^{-1+\frac3p}}
\|(u_F^n,B^n_F)\|_{L_t^1(\dot{B}_{p,1}^{1+\frac3p})}\Big)\Bigg]
.
\end{align}

On the other hand, applying Proposition \ref{shuyun} to the transport equation of \eqref{bijinfangcheng}, we have for $t\in[0,T^n)$
\begin{align}\label{ABA31}
A^n(t)\leq&\|a_0^n\|_{\dot{B}_{q,1}^{\frac3q}}(C\|u_F^n+\bar{u}^n\|_{L_t^1(\dot{B}_{p,1}^{1+\frac3p})})\nonumber\\
\leq&C\|a_0\|_{\dot{B}_{q,1}^{\frac3q}}\exp(C\big(Z^n(t)+\|u_0\|_{\dot{B}_{p,1}^{-1+\frac3p}}\big)),
\end{align}
and
\begin{align}\label{haode}
&\|a^n-\dot{S}_m a^n\|_{\widetilde{L}_t^\infty(\dot{B}_{q,1}^{\frac3q})}\nonumber\\
\leq&\sum_{j\geq m}2^{\frac{3j}{q}}\|\dot{\Delta}_j a_0^n\|_{L^q}+\|a_0^n\|_{\dot{B}_{q,1}^{\frac3q}}
(\exp\big(C\|u_F^n+\bar{u}^n\|_{L_t^1(\dot{B}_{p,1}^{1+\frac3p})}\big)-1)\nonumber\\
\leq&\sum_{j\geq m}2^{\frac{3j}{q}}\|\dot{\Delta}_j a_0\|_{L^q}+\|a_0-a_0^n\|_{\dot{B}_{q,1}^{\frac3q}}\nonumber\\
&+C\|a_0\|_{\dot{B}_{q,1}^{\frac3q}}(\exp\big(CZ^n(t)+C\|u_F^n\|_{L_t^1(\dot{B}_{p,1}^{1+\frac3p})}\big)-1).
\end{align}
However, for any function $\chi\in\mathcal{D}(\R)$ with $\chi(0)=0$,
the composite function $\chi(a^n)$ with initial data $\chi(a_0^n)$ also solves the renormalized transport equation
$$\partial_t\chi(a^n)+(u_F^n+\bar{u}^n)\cdot\nabla \chi(a^n)=0.$$
Whence a similar argument as in \eqref{haode} leads to
\begin{align}\label{ABA32}
&\|\chi(a^n)-\dot{S}_m \chi(a^n)\|_{\widetilde{L}_t^\infty(\dot{B}_{q,1}^{\frac3q})}\nonumber\\
\leq&\sum_{j\geq m}2^{\frac{3j}{q}}\|\dot{\Delta}_j \chi(a_0^n)\|_{L^q}+\|\chi(a_0^n)\|_{\dot{B}_{q,1}^{\frac3q}}
\left(\exp\big(CZ^n(t)+C\|u_F^n\|_{L_t^1(\dot{B}_{p,1}^{1+\frac3p})}\big)-1\right)\nonumber\\
\leq&\sum_{j\geq m}2^{\frac{3j}{q}}\|\dot{\Delta}_j \chi(a_0)\|_{L^q}
+C\big(1+\|a_0\|_{\dot{B}_{q,1}^{\frac3q}}\big)\|a_0^n-a_0\|_{\dot{B}_{q,1}^{\frac3q}}\nonumber\\
&+C\|a_0\|_{\dot{B}_{q,1}^{\frac3q}}\left(\exp\big(CZ^n(t)+C\|u_F^n\|_{L_t^1(\dot{B}_{p,1}^{1+\frac3p})}\big)-1\right),
\end{align}
where we used
\begin{align}\label{ABA33}
\|\chi(a_0^n)-\chi(a_0)\|_{\dot{B}_{q,1}^{\frac3q}}
=&\left\|(a_0^n-a_0)\int_0^1\chi'(\tau a_0^n+(1-\tau)a_0)d\tau\right\|_{\dot{B}_{q,1}^{\frac3q}}\nonumber\\
\leq&C\big(1+\|a_0\|_{\dot{B}_{q,1}^{\frac3q}}\big)\|a_0^n-a_0\|_{\dot{B}_{q,1}^{\frac3q}}.
\end{align}
As a consequence, we obtain for $t\in[0,T^n)$
\begin{align}\label{ABA34}
\|(\mathrm{Id}-\dot{S}_m)a^n\|_{\widetilde{L}_t^\infty(\dot{B}_{q,1}^{\frac3q})}
\leq&\sum_{j\geq m}2^{\frac{3j}{q}}(\|\dot{\Delta}_ja_0\|_{L^q})
+C\big(1+\|a_0\|_{\dot{B}_{q,1}^{\frac3q}}\big)\|a_0^n-a_0\|_{\dot{B}_{q,1}^{\frac3q}}\nonumber\\
&+C\|a_0\|_{\dot{B}_{q,1}^{\frac3q}}\left(\exp\big(CZ^n(t)+C\|u_F^n\|_{L_t^1(\dot{B}_{p,1}^{1+\frac3p})}\big)-1\right).
\end{align}

Next, for any $n\in\mathbb{N}$, we define
\begin{align}\label{ABA35}
T_*^n\stackrel{\mathrm{def}}{=}\sup\{t\in(0,T^n):\ Z^n(t)\leq 2\varepsilon_0\},
\end{align}
with $\varepsilon_0\in(0,\frac12)$ to be determined. We shall prove $\inf_{n\in\mathbb{N}}T_*^n>0$.

Firstly, we deduce from \eqref{ABA31} for $t\leq T_*^n$ that
\begin{align}\label{ABA36}
A^n(t)\leq C\|a_0\|_{\dot{B}_{q,1}^{\frac3q}}\exp\left(C\big(1+\|u_0\|_{\dot{B}_{p,1}^{-1+\frac3p}}\big)\right)
\stackrel{\mathrm{def}}{=}A_0.
\end{align}
Noticing that $a_0\in\dot{B}_{q,1}^{\frac3q}(\R^3)$,
there exist $m=m(c_0)\in\mathbb{Z}$ and $n_0=n_0(c_0)\in\mathbb{N}$ such that
\begin{align}\label{ABA37}
(1+A_0)^2&\left(\sum_{j\geq m}2^{\frac{3j}{q}}(\|\dot{\Delta}_ja_0\|_{L^q})
+\sup_{n\geq n_0}C\big(1+\|a_0\|_{\dot{B}_{q,1}^{\frac3q}}\big)\|a_0^n-a_0\|_{\dot{B}_{q,1}^{\frac3q}}\right)\leq\frac12 c_0.
\end{align}
Yet thanks to \eqref{eq3}, taking $\varepsilon_0$ and $T_0$ small enough and $n_1\geq n_0$ large enough ensures
\begin{align}\label{ABA38}
C(1+A_0)^2\|a_0\|_{\dot{B}_{q,1}^{\frac3q}}\left(\exp\big(2C\varepsilon_0+C\|u_F^n\|_{L_{T_0}^1(\dot{B}_{p,1}^{1+\frac3p})}\big)-1\right)
\leq\frac12c_0
\end{align}
for any $n\geq n_1$.
Combining \eqref{ABA36}--\eqref{ABA38} implies that \eqref{ABA35} with $T=\min(T_*^n,T_0)$ is fulfilled for any $n\geq n_1$.
Without loss of generality, we may assume $T_*^n\leq T_0$. Then for any $t\leq T_*^n$, we deduce from \eqref{ABA29} that
\begin{align}\label{ABA39}
Z^n(t)
\lesssim&(1+2^{2m}A_0^2)\varepsilon_0Z^n(t)\nonumber\\
&+2^{\frac m2}\big(1+A_0\big)^4\Bigg[t^{\frac14}A_0\varepsilon_0
+
A_0\|u_F^n\|_{L_t^1(\dot{B}_{p,1}^{\frac3p+\frac12})}
\nonumber\\
&+
\big(1+A_0\big)\Big(t^{\frac12}(\varepsilon_0^2
+\|(u_0,B_0)\|_{\dot{B}_{p,1}^{-1+\frac3p}})+\|(u_0,B_0)\|_{\dot{B}_{p,1}^{-1+\frac3p}}
\|(u_F^n,B^n_F)\|_{L_t^1(\dot{B}_{p,1}^{1+\frac3p})}\Big)\Bigg]
.
\end{align}

Finally, taking $\varepsilon_0$ and $T_1$ small enough and $n_2\geq n_1$ large enough ensures for any $n\geq n_2$
\begin{align}\label{ABA41}
(1+2^{2m}A_0^2)\varepsilon_0\leq\frac12,
\end{align}
\begin{align}\label{ABA42}
2^{\frac m2}\big(1+A_0\big)^4{T_1}^{\frac14}A_0+\big(1+A_0\big){T_1}^{\frac12}(\varepsilon_0^2
+\|(u_0,B_0)\|_{\dot{B}_{p,1}^{-1+\frac3p}})\leq\frac{\varepsilon_0}{4},
\end{align}
and
\begin{align}\label{ABA43}
2^{\frac m2}\big(1+A_0\big)^5\|(u_0,B_0)\|_{\dot{B}_{p,1}^{-1+\frac3p}}
\|(u_F^n,B^n_F)\|_{L_{T_1}^1(\dot{B}_{p,1}^{1+\frac3p})}\leq\frac{\varepsilon_0}{4},
\end{align}
which together with \eqref{ABA39} implies
$$Z^n(t)\leq\varepsilon_0,\ \ \forall t\leq\min(T_*^n,T_1),\ n\geq n_2.$$
However, by the definition of $T_*^n$, we eventually infer $T_*^n\geq T_1$ and $\sup_{n\geq n_2}Z^n(T_1)\leq \varepsilon_0$,
which along with \eqref{ABA6} and \eqref{ABA36} ensures that $(a^n,u^n,B^n,\nabla\Pi^n)$ is uniformly bounded in $E_{T_1}$.

\noindent{\bf Step 3.} Convergence.

 The convergence of $(u_F^n,  B_F^n)$ to $(u_F, B_F)$ readily stems from the definition of Besov spaces.
As for the convergence of $(a^n,\bar{u}^n,\bar{B}^n)$, it relies upon Ascoli's theorem compactness properties of the consequence, which are obtained by considering the time derivative of the solution, we omit the details here.

\noindent{\bf Step 4.} Uniqueness.

The goal of this section is to prove the uniqueness of our main theorem. We will follow the method used in \cite{abidi2013} to complete our proof.
Before giving the details, we also need the following lemma which can be proved similarly as Lemma 4.1 in  \cite{abidi2013}.
\begin{Lemma} (see \cite{abidi2013}) \label{QA1}
Let $p\in[3,4], q\in[1,2]$, $\frac1q-\frac1p\le\frac13,$  and $$ (a^i,u^i,B^i,\nabla \Pi^i
)\in C_b(0,T;B_{q,1}^{\frac3q})\times( C_b(0,T;\dot{B}_{p,1}^{-1+\frac3p})\cap L^1_T(\dot{B}_{p,1}^{1+\frac3p}))^2\times  L^1_T(\dot{B}_{p,1}^{-1+\frac3p}),$$
for $i=1,2$, be two solutions of the system \eqref{mhdmoxing} and satisfy \eqref{A122} for some $m$. Denote
$$ (\delta a,\delta u,\delta B,\nabla\delta \Pi)\triangleq(a^2-a^1,u^2-u^1,B^2-B^1,\nabla \Pi^2-\nabla \Pi^1).$$
Then there holds
\begin{align}\label{QA2}
(\delta a, \delta u,\delta B,\nabla\delta \Pi) \in & C_b([0, T ]; B_{2,1}^{\frac32}\times C_b([0, T ]; B_{2,1}^{-1/2}\cap L^1([0,T];B_{2,1}^{\frac32})\times C_b([0, T ]; B_{2,1}^{-1/2}) \nonumber\\
&\cap L^1([0,T];B_{2,1}^{\frac32})\times L^1([0,T];B_{2,1}^{-1/2}).
\end{align}
\end{Lemma}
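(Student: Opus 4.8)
The plan is to recover the claimed regularity from the linear estimates already established, viewing $(\delta a,\delta u,\delta B,\nabla\delta\Pi)$ as the solution of a \emph{linear} system whose coefficients and forcing are frozen by the two given solutions. Subtracting the equations \eqref{mhdmoxing} for $i=2$ and $i=1$, and using $\diverg\delta u=\diverg\delta B=\diverg u^1=0$, one finds
\begin{align*}
&\partial_t\delta a+u^2\cdot\nabla\delta a=-\diverg(a^1\delta u),\\
&\partial_t\delta u+u^2\cdot\nabla\delta u-(1+a^2)\Delta\delta u+(1+a^2)\nabla\delta\Pi=\delta F,\\
&\partial_t\delta B+u^2\cdot\nabla\delta B-\diverg\big(\widetilde{\sigma}(a^2)\nabla\delta B\big)=\delta G,\\
&\diverg\delta u=\diverg\delta B=0,\quad(\delta a,\delta u,\delta B)|_{t=0}=(\delta a_0,\delta u_0,\delta B_0),
\end{align*}
where
\begin{align*}
\delta F\triangleq{}&-\delta u\cdot\nabla u^1+\delta a\,\Delta u^1-\delta a\,\nabla\Pi^1+(1+a^2)\big(B^2\cdot\nabla\delta B+\delta B\cdot\nabla B^1\big)+\delta a\,B^1\cdot\nabla B^1,\\
\delta G\triangleq{}&-\delta u\cdot\nabla B^1+B^2\cdot\nabla\delta u+\delta B\cdot\nabla u^1+\diverg\big((\widetilde{\sigma}(a^2)-\widetilde{\sigma}(a^1))\nabla B^1\big).
\end{align*}
In the application to uniqueness the initial differences vanish; in any case $\delta a_0\in B_{2,1}^{3/2}$ and $\delta u_0,\delta B_0\in B_{2,1}^{-1/2}$.

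The membership $\delta a\in C_b([0,T];B_{2,1}^{3/2})$ is immediate, since $a^1,a^2\in C_b([0,T];B_{q,1}^{3/q})$ with $q\le2$ and $B_{q,1}^{3/q}\hookrightarrow B_{2,1}^{3/2}$; alternatively it follows from the transport estimate of Proposition \ref{shuyun} applied to the $\delta a$ equation. The substance of the lemma is therefore (i) to show $\delta F,\delta G\in L_T^1(B_{2,1}^{-1/2})$, and (ii) to feed the three evolution equations into the linear theory. For (i) I would use Bony's decomposition together with the product laws of Lemma \ref{daishu} and Remark \ref{product law1}, exploiting that $a^i\in C_b(B_{q,1}^{3/q})$, that $\nabla\Pi^i\in L_T^1(\dot B_{p,1}^{-1+3/p})$, and that, by interpolating $\widetilde L_T^\infty(\dot B_{p,1}^{-1+3/p})$ with $L_T^1(\dot B_{p,1}^{1+3/p})$, one has $u^i,B^i\in L_T^2(\dot B_{p,1}^{3/p})$ and in every intermediate space. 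The advective, magnetic and conductivity contributions to $\delta F,\delta G$ are routine once $\delta u,\delta B\in L_T^\infty(B_{2,1}^{-1/2})\cap L_T^1(B_{2,1}^{3/2})$ is assumed; the delicate terms are those in which $\delta a$ multiplies a second-order quantity, namely $\delta a\,\Delta u^1$, $\delta a\,\nabla\Pi^1$ and $\delta a\,B^1\cdot\nabla B^1$. These I would handle by transferring a derivative, e.g.\ $\delta a\,\Delta u^1=\diverg(\delta a\,\nabla u^1)-\nabla\delta a\cdot\nabla u^1$, and then pairing $\delta a\in B_{2,1}^{3/2}$, resp.\ $\nabla\delta a\in B_{2,1}^{1/2}$, against $\nabla u^1\in L_T^1(\dot B_{p,1}^{3/p})$; the constraints $p\in[3,4]$, $q\in[1,2]$ and $\tfrac1q-\tfrac1p\le\tfrac13$ are precisely what make these products land in $L_T^1(B_{2,1}^{-1/2})$.

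For (ii) I would apply Proposition \ref{weiyixingmingti} to the $\delta u$ equation (with $a=a^2$, $v=u^2$, $f=\delta F$, datum $\delta u_0$), obtaining $\delta u\in C_b([0,T];B_{2,1}^{-1/2})\cap L_T^1(B_{2,1}^{3/2})$ and $\nabla\delta\Pi\in L_T^1(H^{-1/2-\alpha})$, together with a $B_{2,1}^{-1/2}$-analogue of Proposition \ref{monimingti} — proved by the same frozen-coefficient splitting $\widetilde{\sigma}(a^2)=\widetilde{\sigma}(\dot{S}_m a^2)+\big(\widetilde{\sigma}(a^2)-\widetilde{\sigma}(\dot{S}_m a^2)\big)$ and the smallness \eqref{A122} — applied to the $\delta B$ equation, giving $\delta B\in C_b([0,T];B_{2,1}^{-1/2})\cap L_T^1(B_{2,1}^{3/2})$. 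Finally, applying $\diverg$ to the $\delta u$ equation gives $\diverg\big((1+a^2)\nabla\delta\Pi\big)=\diverg\big(\delta F+\nabla a^2\cdot\nabla\delta u+a^2\Delta\delta u\big)$, and Proposition \ref{tuoyuanguji} with $p=2$ then places $\nabla\delta\Pi\in L_T^1(B_{2,1}^{-1/2})$, which is \eqref{QA2}. To legitimize the circular dependence — $\delta F,\delta G$ contain $\delta u,\delta B$ — one sets the estimate up as an a priori bound on a maximal subinterval, or runs a Picard iteration for the now genuinely linear system (the coefficients $a^2,\widetilde{\sigma}(a^2)$ and the transport field $u^2$ being fixed) and identifies its solution with the prescribed difference through a weak uniqueness statement in a space containing both.

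\textbf{The main obstacle} is step (i): the terms in which $\delta a$ meets a second-order quantity force one to spend the full regularity $\delta a\in B_{2,1}^{3/2}$ of the density difference against the parabolic gain on $u^1,B^1,\nabla\Pi^1$, and it is exactly this balance that explains why the difference must be estimated one derivative below the existence space and why the range $p\in[3,4]$, $q\in[1,2]$ cannot be relaxed within this method.
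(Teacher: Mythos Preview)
The paper does not give its own proof of this lemma; it simply states that it ``can be proved similarly as Lemma~4.1 in \cite{abidi2013}'' and cites that reference. So there is no detailed argument in the paper to compare against. Your reconstruction --- write the difference system, show the forcing lands in $L^1_T(B_{2,1}^{-1/2})$, then invoke the linear theory (Proposition~\ref{weiyixingmingti} for $\delta u$, a $B_{2,1}^{-1/2}$ analogue of Proposition~\ref{monimingti} for $\delta B$) --- is exactly the scheme behind \cite{abidi2013}, adapted to include the magnetic field, and is what the paper has in mind.

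Two remarks. First, for the pressure you should not appeal to Proposition~\ref{tuoyuanguji} with $p=2$: that only gives $\nabla\delta\Pi\in\dot B_{2,2}^{0}=L^2$, not $B_{2,1}^{-1/2}$. The correct tool is the $B_{2,1}^{-1/2}$ elliptic estimate from \cite{abidi2013} (used in the paper at \eqref{QA15}), which is also why the paper imports Proposition~3.4 and Proposition~3.5 from that reference in the uniqueness argument. Second, and more substantively, the circularity you flag is the actual content of the lemma, and your proposed fixes are a bit too breezy. Neither the ``maximal subinterval'' nor the ``Picard plus weak uniqueness'' route is self-evident here, because a priori $\delta u$ is only known to lie in $\dot B_{p,1}^{-1+3/p}$ with $p\ge 3$, which does \emph{not} embed into $B_{2,1}^{-1/2}$, so there is no obvious common space in which to run a uniqueness comparison. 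In \cite{abidi2013} this is handled by exploiting that $a^i\in B_{q,1}^{3/q}$ is \emph{inhomogeneous} with $q\le 2$ (hence $a^i\in L^2$), and by using the parabolic smoothing away from $t=0$ (the higher-regularity estimate~\eqref{bande}) to place each $u^i$, $B^i$ in enough $L^2$-based spaces on $(t_0,T]$ that the $B_{2,1}^{-1/2}$ computation becomes an honest a~priori estimate; one then lets $t_0\downarrow 0$ using $\delta u(0)=\delta B(0)=0$. Your identification of the $\delta a$-times-second-order terms as delicate is correct, but the bootstrap mechanism just described is the step that deserves the most care.
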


Now, let us begin to prove our uniqueness, it's easy to get $(\delta a, \delta u,\delta B,\nabla\delta \Pi)$ solves
\begin{eqnarray}\label{Model2}
\left\{\begin{aligned}
&\partial_t \delta a+u^2\cdot\nabla \delta a =-\delta u\cdot\nabla a^1,\\
&\partial_t \delta u+u^2\cdot\nabla \delta u-(1+S_ma^2)(\Delta\delta u-\nabla\delta\Pi)=F_1(a^i,u^i,B^i,\nabla\Pi^i),\\
&\partial_t \delta B+u^2\cdot\nabla \delta B-\mathrm{div}\big(\widetilde{\sigma}(a^2)\nabla \delta B)=F_2(a^i,u^i,B^i),\\
&\diverg u =\diverg B =0,\\
&(\delta a,\delta u,\delta B)|_{t=0}=(0,0,0),
\end{aligned}\right.
\end{eqnarray}
where
\begin{align*}
F_1(a^i,u^i,B^i,\nabla\Pi^i)=&(a^2-S_ma^2)(\Delta\delta u-\nabla\delta\Pi)
-\delta u\cdot\nabla  u^1+\delta a(\Delta u^1-\nabla\Pi^1)\nonumber\\
&+(1+a^2)(\delta B\cdot\nabla B^2+ B^1\cdot\nabla \delta B)+\delta a( B^1\cdot\nabla B^1),
\end{align*}
\begin{align*}
F_2(a^i,u^i,B^i)=-\delta u\cdot\nabla  B^1+\delta B\cdot\nabla  u^2+B^1\cdot\nabla \delta u+\mathrm{div}\big((\widetilde{\sigma}(a^2)-\widetilde{\sigma}(a^1))\nabla  B^1.
\end{align*}
Using a similar method as  in Proposition \ref{monimingti}, we can get from the third equation of \eqref{Model2} that
\begin{align}\label{QA4}
&\|\delta B\|_{\widetilde{L}_t^{\infty}({B}_{2,1}^{-\frac12})}+\|\delta B\|_{{L}_t^1({B}_{2,1}^{\frac32})}\nonumber\\
\lesssim &\|[\Delta_j, u^2\cdot\nabla]\delta B\|_{L^1({B}_{2,1}^{-\frac12}))}+\|F_2\|_{{L_t^1({B_{2,1}^{-\frac12}})}}\nonumber\\
&+\sum_{j\in\mathbb{Z}}2^{\frac{j}{2}}\|[{\Delta}_j,\widetilde{\sigma}({S}_m a^2)]\nabla \delta B\|_{L_t^1(L^2)}+\|\mathrm{div}((\widetilde{\sigma}(a^2)-\widetilde{\sigma}({S}_m a^2))\nabla \delta B)\|_{L_t^1({B}_{2,1}^{-\frac{1}{2}})}.
\end{align}
By Lemmas \ref{bernstein}, \ref{daishu}, \ref{jiaohuanzi},  one has
\begin{align}\label{QA5}
\|[\Delta_j, u^2\cdot\nabla]\delta B\|_{L^1({B}_{2,1}^{-\frac12}))}\lesssim\int_0^t \| \nabla u^2\|_{{\dot{B}_{p,1}^{\frac3p}}}\|\delta B\|_{{{B}_{2,1}^{-\frac12}}}
d\tau,
\end{align}
\begin{align}\label{QA6}
\|\mathrm{div}((\widetilde{\sigma}(a^2)-\widetilde{\sigma}({S}_m a^2))\nabla \delta B)\|_{L_t^1({B}_{2,1}^{-\frac{1}{2}})}\lesssim&\|(\widetilde{\sigma}(a^2)-\widetilde{\sigma}({S}_m a^2))\nabla \delta B\|_{L_t^1({B}_{2,1}^{\frac{1}{2}})}\nonumber\\
\le&C_{\widetilde{\sigma}}\|a^2-{S}_m a^2\|_{\widetilde{L}_t^{\infty}({B}_{2,1}^{\frac32})}\| \delta B\|_{L_t^1({B}_{2,1}^{\frac{1}{2}})},
\end{align}
\begin{align}\label{QA7}
\sum_{j\in\mathbb{Z}}2^{\frac{j}{2}}\|[{\Delta}_j,\widetilde{\sigma}({S}_m a^2)]\nabla \delta B\|_{L_t^1(L^2)}\lesssim&\int_0^t\|\nabla\widetilde{\sigma}({S}_m a^2)\|_{{B}_{2,1}^{\frac32}}\| \delta B\|_{{B}_{2,1}^{1}}d\tau\nonumber\\
\lesssim&\varepsilon\| \delta B\|_{L_t^1({B}_{2,1}^{\frac32})}+2^{2m}\int_0^t \| a^2\|_{{B}_{2,1}^{\frac32}}^2\| \delta B\|_{{B}_{2,1}^{\frac{1}{2}}}d\tau,
\end{align}
\begin{align}\label{QA8}
&\|\delta u\cdot\nabla  B^1+\delta B\cdot\nabla  u^2+B^1\cdot\nabla \delta u\|_{L_t^1({B}_{2,1}^{-\frac{1}{2}})}\nonumber\\
\lesssim&\int_0^t (\|\delta u\|_{{{B}_{2,1}^{\frac12}}}+\|\delta B\|_{{{B}_{2,1}^{\frac12}}})
(\| B^1\|_{{\dot{B}_{p,1}^{\frac3p}}}+\| u^2\|_{{\dot{B}_{p,1}^{\frac3p}}})
d\tau\nonumber\\
\lesssim&\int_0^t (\|\delta u\|^{\frac12}_{{{B}_{2,1}^{-\frac12}}}\|\delta u\|^{\frac12}_{{{B}_{2,1}^{\frac32}}}+\|\delta B\|^{\frac12}_{{{B}_{2,1}^{-\frac12}}}\|\delta B\|^{\frac12}_{{{B}_{2,1}^{\frac32}}})
(\| B^1\|_{{\dot{B}_{p,1}^{\frac3p}}}+\| u^2\|_{{\dot{B}_{p,1}^{\frac3p}}})
d\tau\nonumber\\
\le& \varepsilon(\| \delta u\|_{{L_t^1({B_{2,1}^{\frac32}})}}+\| \delta B\|_{{L_t^1({B_{2,1}^{\frac32}})}})\nonumber\\
&+C_\varepsilon\int_0^t (\|\delta u\|_{{{B}_{2,1}^{-\frac12}}}+\|\delta B\|_{{{B}_{2,1}^{-\frac12}}}) (\| B^1\|^2_{{\dot{B}_{p,1}^{\frac3p}}}+\| u^2\|^2_{{\dot{B}_{p,1}^{\frac3p}}})  d\tau,
\end{align}
\begin{align}\label{QA9}
\|\mathrm{div}(\widetilde{\sigma}(a^2)-\widetilde{\sigma}(a^1))\nabla  B^1\|_{L_t^1({B}_{2,1}^{-\frac{1}{2}})}\lesssim&\|(\widetilde{\sigma}(a^2)-\widetilde{\sigma}(a^1))\nabla  B^1\|_{L_t^1({B}_{2,1}^{\frac{1}{2}})} \nonumber\\
&\le C_{\widetilde{\sigma}}\|(a^1,a^2)\|_{\widetilde{L}_t^{\infty}({B}_{2,1}^{\frac32})}
\int_0^t \| \delta a\|_{{\dot{B}_{2,1}^{\frac12}}}\| B^1\|_{{\dot{B}_{p,1}^{1+\frac3p}}}
d\tau.
\end{align}
Thus, inserting  the above estimates \eqref{QA5}--\eqref{QA9} into \eqref{QA4} and taking $\varepsilon$ small enough, we have
\begin{align}\label{QA11}
&\|\delta B\|_{\widetilde{L}_t^{\infty}({B}_{2,1}^{-\frac12})}+\|\delta B\|_{{L}_t^1({B}_{2,1}^{\frac32})}\nonumber\\
\lesssim &\int_0^t \| \nabla u^2\|_{{\dot{B}_{p,1}^{\frac3p}}}\|\delta B\|_{{{B}_{2,1}^{-\frac12}}}
d\tau+C_{\widetilde{\sigma}}\|a^2-{S}_m a^2\|_{\widetilde{L}_t^{\infty}({B}_{2,1}^{\frac32})}\| \delta B\|_{L_t^1({B}_{2,1}^{\frac{1}{2}})}\nonumber\\
&
+2^{2m}\int_0^t \| a^2\|_{{B}_{2,1}^{\frac32}}^2\| \delta B\|_{{B}_{2,1}^{\frac{1}{2}}}d\tau
+\varepsilon(\| \delta u\|_{{L_t^1({B_{2,1}^{\frac32}})}}+\| \delta B\|_{{L_t^1({B_{2,1}^{\frac32}})}})\nonumber\\
&+C_\varepsilon\int_0^t (\|\delta u\|_{{{B}_{2,1}^{-\frac12}}}+\|\delta B\|_{{{B}_{2,1}^{-\frac12}}}) (\| B^1\|^2_{{\dot{B}_{p,1}^{\frac3p}}}+\| u^2\|^2_{{\dot{B}_{p,1}^{\frac3p}}})  d\tau\nonumber\\
&+C_{\widetilde{\sigma}}\|(a^1,a^2)\|_{\widetilde{L}_t^{\infty}({B}_{2,1}^{\frac32})}
\int_0^t \| \delta a\|_{{\dot{B}_{2,1}^{\frac12}}}\| B^1\|_{{\dot{B}_{p,1}^{1+\frac3p}}}
d\tau.
\end{align}
Applying Proposition \ref{weiyixingmingti} to the second equation of \eqref{Model2}, we can get
\begin{align}\label{QA12}
&\|\delta u\|_{\widetilde{L}_t^{\infty}({B}_{2,1}^{-\frac12})}+\|\delta u\|_{{L}_t^1({B}_{2,1}^{\frac32})}\nonumber\\
\lesssim&\Bigg\{
\int_0^t\|\delta u\|_{{B_{2,1}^{-\frac12}}}\|\nabla u^2\|_{{B_{p,1}^{-1+\frac3p}}}d\tau+\|S_ma^2\|_{L_t^\infty(H^{\frac32+\alpha})}\|\nabla \delta\Pi\|_{L^1_t(H^{-\frac12-\alpha})}\nonumber\\
&+\|S_ma^2\|_{L_t^\infty(H^{2})}\|\nabla \delta u\|_{L^1_t(L^2)}+\|\delta u\|_{L^1_t(L^2)}+\|F_1(a^i,u^i,B^i,\nabla\Pi^i)\|_{{L_t^1({B_{2,1}^{-\frac12}})}}
\Bigg\},
\end{align}
where
\begin{align}\label{QA13}
\|F_1\|_{{L_t^1({B_{2,1}^{-\frac12}})}}\lesssim& \|a^2-S_ma^2\|_{L^\infty({B_{2,1}^{\frac32}})}(\|\delta u\|_{{L_t^1({B_{2,1}^{-\frac12}})}}
+\|\nabla\delta \Pi\|_{{L_t^1({B_{2,1}^{-\frac12}})}})
\nonumber\\
&+\|\delta u\cdot\nabla  u^1\|_{{L_t^1({B_{2,1}^{-\frac12}})}}
+\|\delta a(\Delta u^1-\nabla\Pi^1)\|_{{L_t^1({B_{2,1}^{-\frac12}})}}
+\|(1+a^2)\delta B\cdot\nabla B^2\|_{{L_t^1({B_{2,1}^{-\frac12}})}}\nonumber\\
&+\|(1+a^2)(B^1\cdot\nabla \delta B)\|_{{L_t^1({B_{2,1}^{-\frac12}})}}+\|\delta a( B^1\cdot\nabla B^1)\|_{{L_t^1({B_{2,1}^{-\frac12}})}}.
\end{align}

Taking $divergence$ to the second equation of \eqref{Model2}, we have
\begin{align}\label{QA14}
\diverg((1+S_ma^2)\nabla\delta\Pi)=\diverg F_3,
\end{align}
with
$$F_3=-u^2\cdot\nabla \delta u+S_ma^2\Delta\delta u+F_1.$$
By Proposition 3.4 in \cite{abidi2013}, we have
\begin{align}\label{QA15}
\|\nabla\delta\Pi\|_{{L_t^1({B_{2,1}^{-\frac12}})}}\lesssim&\|F_3\|_{{L_t^1({B_{2,1}^{-\frac12}})}}+\|S_ma^2\|_{L_t^\infty(H^{\frac32+\alpha})}\|\nabla \delta\Pi\|_{L^1_t(H^{-\frac12-\alpha})}\nonumber\\
\lesssim&\|u^2\cdot\nabla \delta u\|_{{L_t^1({B_{2,1}^{-\frac12}})}}+  \|S_ma^2\|_{{L}_t^{\infty}({B}_{2,1}^{\frac32})}\|\Delta\delta u\|_{{L_t^1({B_{2,1}^{-\frac12}})}}\nonumber\\
&+\|F_1\|_{{L_t^1({B_{2,1}^{-\frac12}})}}+\|S_ma^2\|_{L_t^\infty(H^{\frac32+\alpha})}\|\nabla \delta\Pi\|_{L^1_t(H^{-\frac12-\alpha})}.
\end{align}
Takin the above estimate into \eqref{QA13},  we get from \eqref{QA12} that
\begin{align}\label{QA16}
&\|\delta u\|_{\widetilde{L}_t^{\infty}({B}_{2,1}^{-\frac12})}+\|\delta u\|_{{L}_t^1({B}_{2,1}^{\frac32})}\nonumber\\
\lesssim&
\int_0^t\|\delta u\|_{{B_{2,1}^{-\frac12}}}\|\nabla u^2\|_{{B_{p,1}^{-1+\frac3p}}}d\tau+\|S_ma^2\|_{L_t^\infty(H^{\frac32+\alpha})}\|\nabla \delta\Pi\|_{L^1_t(H^{-\frac12-\alpha})}\nonumber\\
&+\|S_ma^2\|_{L_t^\infty(H^{2})}\|\nabla \delta u\|_{L^1_t(L^2)}+\|\delta u\|_{L^1_t(L^2)}+\|u^2\cdot\nabla \delta u\|_{{L_t^1({B_{2,1}^{-\frac12}})}}
\nonumber\\
&+\|\delta u\cdot\nabla  u^1\|_{{L_t^1({B_{2,1}^{-\frac12}})}}
+\|\delta a(\Delta u^1-\nabla\Pi^1)\|_{{L_t^1({B_{2,1}^{-\frac12}})}}+\|\delta a( B^1\cdot\nabla B^1)\|_{{L_t^1({B_{2,1}^{-\frac12}})}}
\nonumber\\
&+\|(1+a^2)\delta B\cdot\nabla B^2\|_{{L_t^1({B_{2,1}^{-\frac12}})}}
+\|(1+a^2)(B^1\cdot\nabla \delta B)\|_{{L_t^1({B_{2,1}^{-\frac12}})}}.
\end{align}
Firstly, it follows from Proposition \ref{shuyun}  that
\begin{align}\label{buyaoshan}
&\|\delta a\|_{\widetilde{L}_t^{\infty}({B}_{2,1}^{\frac12})}\lesssim\exp\{C \|\nabla u^2\|_{{L_t^1({B_{p,1}^{\frac3p}})}}\}
\| a^1\|_{{L}_t^{\infty}({B}_{2,1}^{\frac32})}\|\delta u\|_{{L_t^1({B_{2,1}^{\frac32}})}}.
\end{align}
In what follows, we will estimate the terms  on the right hand side of \eqref{QA16}.
By Lemma \ref{daishu}, Young's inequality and \eqref{buyaoshan}, we have
\begin{align*}
&\|\delta a( B^1\cdot\nabla B^1)\|_{{L_t^1({B_{2,1}^{-\frac12}})}}+\|\delta a(\Delta u^1-\nabla\Pi^1)\|_{{L_t^1({B_{2,1}^{-\frac12}})}}\nonumber\\
\lesssim&\|\delta a(\Delta u^1-\nabla\Pi^1)\|_{{L_t^1({\dot{B}_{2,1}^{-\frac12}})}}+\|\delta a( B^1\cdot\nabla B^1)\|_{{L_t^1({\dot{B}_{2,1}^{-\frac12}})}}\nonumber\\
\lesssim&\int_0^t\|\delta a\|_{{\dot{B}_{2,1}^{\frac12}}}(\|\Delta u^1\|_{{\dot{B}_{p,1}^{-1+\frac3p}}}+\|\nabla\Pi^1\|_{{\dot{B}_{p,1}^{-1+\frac3p}}} +\|B^1\cdot\nabla B^1\|_{{\dot{B}_{p,1}^{-1+\frac3p}}} )d\tau\nonumber\\
\lesssim&\int_0^t\exp\{C \|\nabla u^2\|_{{L_\tau^1({B_{p,1}^{\frac3p}})}}\}
\| a^1\|_{{L}_\tau^{\infty}({B}_{2,1}^{\frac32})}\|\delta u\|_{{L_\tau^1({B_{2,1}^{\frac32}})}}
\end{align*}
\begin{align}\label{QA18}
&\quad\times(\|\Delta u^1\|_{{\dot{B}_{p,1}^{-1+\frac3p}}}+\|\nabla\Pi^1\|_{{\dot{B}_{p,1}^{-1+\frac3p}}} +\|B^1\|_{{\dot{B}_{p,1}^{-1+\frac3p}}}\|B^1\|_{{\dot{B}_{p,1}^{1+\frac3p}}})d\tau\nonumber\\
\lesssim&\int_0^t\|\delta u\|_{{L_\tau^1({B_{2,1}^{\frac32}})}}
(\|\Delta u^1\|_{{\dot{B}_{p,1}^{-1+\frac3p}}}+\|\nabla\Pi^1\|_{{\dot{B}_{p,1}^{-1+\frac3p}}} +\|B^1\|_{{\dot{B}_{p,1}^{-1+\frac3p}}}\|B^1\|_{{\dot{B}_{p,1}^{1+\frac3p}}})d\tau,
\end{align}
and
\begin{align}\label{QA19}
&\|u^2\cdot\nabla \delta u\|_{{L_t^1({B_{2,1}^{-\frac12}})}}+\|\delta u\cdot\nabla  u^1\|_{{L_t^1({B_{2,1}^{-\frac12}})}}+\|(1+a^2)\delta B\cdot\nabla B^2\|_{{L_t^1({B_{2,1}^{-\frac12}})}}\nonumber\\
&\quad+\|(1+a^2)(B^1\cdot\nabla \delta B)\|_{{L_t^1({B_{2,1}^{-\frac12}})}}\nonumber\\
\lesssim&\int_0^t \Big((\| u^1\|_{{\dot{B}_{p,1}^{\frac3p}}}+\| u^2\|_{{\dot{B}_{p,1}^{\frac3p}}})\|\delta u\|_{{{B}_{2,1}^{\frac12}}}
+(1+\| a^2\|_{{L}_t^{\infty}({B}_{2,1}^{\frac32})})(\| B^1\|_{{\dot{B}_{p,1}^{\frac3p}}}+\| B^2\|_{{\dot{B}_{p,1}^{\frac3p}}})\|\delta u\|_{{{B}_{2,1}^{\frac12}}}
\Big)d\tau\nonumber\\
\le& \varepsilon(\| \delta u\|_{{L_t^1({B_{2,1}^{\frac32}})}}+\| \delta B\|_{{L_t^1({B_{2,1}^{\frac32}})}})
+C_\varepsilon\int_0^t \|\delta u\|_{{{B}_{2,1}^{-\frac12}}} (\| u^1\|^2_{{\dot{B}_{p,1}^{\frac3p}}}+\| u^2\|^2_{{\dot{B}_{p,1}^{\frac3p}}})  d\tau\nonumber\\
&+C_\varepsilon\int_0^t \|\delta B\|_{{{B}_{2,1}^{-\frac12}}}(1+\| a^2\|_{{L}_t^{\infty}({B}_{2,1}^{\frac32})})^2(\| B^1\|_{{\dot{B}_{p,1}^{\frac3p}}}+\| B^2\|_{{\dot{B}_{p,1}^{\frac3p}}})d\tau.
\end{align}
Plugging the estimates \eqref{QA18}, \eqref{QA19} into \eqref{QA16}, we can get
\begin{align}\label{QA20}
&\|\delta u\|_{\widetilde{L}_t^{\infty}({B}_{2,1}^{-\frac12})}+\|\delta u\|_{{L}_t^1({B}_{2,1}^{\frac32})}\nonumber\\
\lesssim&\|S_ma^2\|_{L_t^\infty(H^{\frac32+\alpha})}\|\nabla \delta\Pi\|_{L^1_t(H^{-\frac12-\alpha})}
+(1+\|S_ma^2\|_{L_t^\infty(H^{2})})\| \delta u\|_{L^1_t(H^1)}\nonumber\\
+&\int_0^t\|\delta u\|_{{B_{2,1}^{-\frac12}}}(\| u^2\|_{{B_{p,1}^{\frac3p}}}+\| u^1\|^2_{{\dot{B}_{p,1}^{\frac3p}}}+\| u^2\|^2_{{\dot{B}_{p,1}^{\frac3p}}})d\tau\nonumber\\
&+\int_0^t \|\delta B\|_{{{B}_{2,1}^{-\frac12}}}(1+\| a^2\|_{{L}_t^{\infty}({B}_{2,1}^{\frac32})})^2(\| B^1\|_{{\dot{B}_{p,1}^{\frac3p}}}+\| B^2\|_{{\dot{B}_{p,1}^{\frac3p}}})d\tau\nonumber\\
&+\int_0^t\|\delta u\|_{{L_\tau^1({B_{2,1}^{\frac32}})}}
(\|\Delta u^1\|_{{\dot{B}_{p,1}^{-1+\frac3p}}}+\|\nabla\Pi^1\|_{{\dot{B}_{p,1}^{-1+\frac3p}}} +\|B^1\|_{{\dot{B}_{p,1}^{-1+\frac3p}}}\|B^1\|_{{\dot{B}_{p,1}^{1+\frac3p}}})d\tau.
\end{align}
In the following, we have to estimate $\|\nabla\delta\Pi\|_{H^{-\frac12-\alpha}}$.
We get from \eqref{Model2} that
\begin{align*}
\diverg((1+a^2)\nabla\delta\Pi)=\diverg F_4,
\end{align*}
with
\begin{align*}
F_4=&a^2\Delta\delta u-u^2\cdot\nabla \delta u
-\delta u\cdot\nabla  u^1+\delta a(\Delta u^1-\nabla\Pi^1)\nonumber\\
&+(1+a^2)(\delta B\cdot\nabla B^2+ B^1\cdot\nabla \delta B)+\delta a( B^1\cdot\nabla B^1).
\end{align*}
Hence, by Proposition 3.5 in \cite{abidi2013}, we have
\begin{align}\label{QA23}
\|\nabla\delta\Pi\|_{H^{-\frac12-\alpha}}\lesssim(1+2^{m(\frac12+\frac3q)}\|a^2\|_{{B_{2,1}^{\frac32}}})\|F_4\|_{H^{-\frac12-\alpha}}.
\end{align}
By Lemma  \ref{daishu}, and Young's inequality, we have
\begin{align}\label{QA24}
\|F_4\|_{H^{-\frac12-\alpha}}\lesssim&\|a^2\|_{{B_{2,1}^{\frac32}}}\|\Delta\delta u\|_{H^{-\frac12-\alpha}}+\|\delta u\|_{\dot{H}^{\frac32-\alpha}}(\| u^1\|_{{\dot{B}_{p,1}^{-1+\frac3p}}}+\| u^2\|_{{\dot{B}_{p,1}^{-1+\frac3p}}})\nonumber\\
&+\|\delta B\|_{\dot{H}^{\frac32-\alpha}} (1+\| a^2\|_{{L}_t^{\infty}({B}_{2,1}^{\frac32})})(\| B^1\|_{{\dot{B}_{p,1}^{\frac3p}}}+\| B^2\|_{{\dot{B}_{p,1}^{\frac3p}}})\nonumber\\
&+\|\delta a\|_{{B_{2,1}^{\frac12-\alpha}}}(\| u^1\|_{{\dot{B}_{p,1}^{\frac3p}}}
+\| \nabla\Pi^1\|_{{\dot{B}_{p,1}^{-1+\frac3p}}}+\| B^1\|_{{\dot{B}_{p,1}^{-1+\frac3p}}}\| B^1\|_{{\dot{B}_{p,1}^{1+\frac3p}}})\nonumber\\
\lesssim&\|\delta u\|_{{H}^{\frac32-\alpha}}(\|a^2\|_{{B_{2,1}^{\frac32}}}+\| u^1\|_{{\dot{B}_{p,1}^{-1+\frac3p}}}+\| u^2\|_{{\dot{B}_{p,1}^{-1+\frac3p}}})\nonumber\\
&+\|\delta B\|_{{H}^{\frac32-\alpha}} (1+\| a^2\|_{{L}_t^{\infty}({B}_{2,1}^{\frac32})})(\| B^1\|_{{\dot{B}_{p,1}^{\frac3p}}}+\| B^2\|_{{\dot{B}_{p,1}^{\frac3p}}})\nonumber\\
&+\|\delta a\|_{{B_{2,1}^{\frac12}}}(\| u^1\|_{{\dot{B}_{p,1}^{\frac3p}}}
+\| \nabla\Pi^1\|_{{\dot{B}_{p,1}^{-1+\frac3p}}}+\| B^1\|_{{\dot{B}_{p,1}^{-1+\frac3p}}}\| B^1\|_{{\dot{B}_{p,1}^{1+\frac3p}}}).
\end{align}
Thus, taking  the above estimate into \eqref{QA23}, we have
\begin{align}\label{QA25}
\|\nabla\delta\Pi\|_{L_t^1(H^{-\frac12-\alpha})}\lesssim&(1+2^{m(\frac12+\frac3q)}\|a^2\|_{L^\infty({B_{2,1}^{\frac32}})})\Big\{  \int_0^t \|\delta u\|_{{H}^{\frac32-\alpha}}(\|a^2\|_{{B_{2,1}^{\frac32}}}+\| u^1\|_{{\dot{B}_{p,1}^{-1+\frac3p}}}+\| u^2\|_{{\dot{B}_{p,1}^{-1+\frac3p}}})d\tau\nonumber\\
&+\int_0^t \|\delta B\|_{{H}^{\frac32-\alpha}} (1+\| a^2\|_{{L}_t^{\infty}({B}_{2,1}^{\frac32})})(\| B^1\|_{{\dot{B}_{p,1}^{\frac3p}}}+\| B^2\|_{{\dot{B}_{p,1}^{\frac3p}}})d\tau\nonumber\\
&+\int_0^t \|\delta a\|_{{B_{2,1}^{\frac12}}}(\| u^1\|_{{\dot{B}_{p,1}^{\frac3p}}}
+\| \nabla\Pi^1\|_{{\dot{B}_{p,1}^{-1+\frac3p}}}+\| B^1\|_{{\dot{B}_{p,1}^{-1+\frac3p}}}\| B^1\|_{{\dot{B}_{p,1}^{1+\frac3p}}})d\tau \Big\}.
\end{align}
By interpolation inequality, one has
\begin{align}\label{QA26}
&\|\delta u\|_{L_t^1({H}^{\frac32-\alpha})}\le \varepsilon\|\delta u\|_{{L}_t^1({B}_{2,1}^{\frac32})}+C_\varepsilon\|\delta u\|_{{L}_t^1({B}_{2,1}^{-\frac12})},\nonumber\\
&\|\delta B\|_{L_t^1({H}^{\frac32-\alpha})}\le \varepsilon\|\delta B\|_{{L}_t^1({B}_{2,1}^{\frac32})}+C_\varepsilon\|\delta B\|_{{L}_t^1({B}_{2,1}^{-\frac12})}.
\end{align}
Taking the above estimates into \eqref{QA25} and choosing $\varepsilon$ small enough, we can get from \eqref{QA20} that
\begin{align}\label{QA28}
&\|\delta u\|_{\widetilde{L}_t^{\infty}({B}_{2,1}^{-\frac12})}+\|\delta u\|_{{L}_t^1({B}_{2,1}^{\frac32})}\nonumber\\
\lesssim&\|S_ma^2\|_{L_t^\infty(H^{\frac32+\alpha})}\|\nabla \delta\Pi\|_{L^1_t(H^{-\frac12-\alpha})}
+(1+\|S_ma^2\|_{L_t^\infty(H^{2})})\| \delta u\|_{L^1_t(H^1)}\nonumber\\
+&\int_0^t\|\delta u\|_{{B_{2,1}^{-\frac12}}}(\| u^2\|_{{B_{p,1}^{\frac3p}}}+\| u^1\|^2_{{\dot{B}_{p,1}^{\frac3p}}}+\| u^2\|^2_{{\dot{B}_{p,1}^{\frac3p}}})d\tau\nonumber\\
&+\int_0^t \|\delta B\|_{{{B}_{2,1}^{-\frac12}}}(1+\| a^2\|_{{L}_t^{\infty}({B}_{2,1}^{\frac32})})^2(\| B^1\|^2_{{\dot{B}_{p,1}^{\frac3p}}}+\| B^2\|^2_{{\dot{B}_{p,1}^{\frac3p}}})d\tau\nonumber\\
&+\int_0^t\|\delta u\|_{{L_\tau^1({B_{2,1}^{\frac32}})}}
(\|\Delta u^1\|_{{\dot{B}_{p,1}^{-1+\frac3p}}}+\|\nabla\Pi^1\|_{{\dot{B}_{p,1}^{-1+\frac3p}}} +\|B^1\|_{{\dot{B}_{p,1}^{-1+\frac3p}}}\|B^1\|_{{\dot{B}_{p,1}^{1+\frac3p}}})d\tau.
\end{align}
Thanks to estimates \eqref{QA11}, \eqref{QA28}, one can finally get by choosing $\varepsilon$ small enough that
\begin{align}\label{QA29}
&\|\delta u\|_{\widetilde{L}_t^{\infty}({B}_{2,1}^{-\frac12})}+\|\delta u\|_{{L}_t^1({B}_{2,1}^{\frac32})}+\|\delta B\|_{\widetilde{L}_t^{\infty}({B}_{2,1}^{-\frac12})}+\|\delta B\|_{{L}_t^1({B}_{2,1}^{\frac32})}\nonumber\\
\lesssim&\int_0^t(\|\delta u\|_{{B_{2,1}^{-\frac12}}}+\|\delta B\|_{{B_{2,1}^{-\frac12}}}+\|\delta u\|_{{L_\tau^1({B_{2,1}^{\frac32}})}})w(\tau)d\tau,
\end{align}
where
\begin{align}\label{QA30}
w(\tau)=&\|  u^2\|_{{\dot{B}_{p,1}^{1+\frac3p}}}+2^{2m} \| a^2\|_{{B}_{2,1}^{\frac32}}^2+(1+\| a^2\|_{{L}_t^{\infty}({B}_{2,1}^{\frac32})})^2
(\| B^1\|^2_{{\dot{B}_{p,1}^{\frac3p}}}+\| B^2\|^2_{{\dot{B}_{p,1}^{\frac3p}}}+| u^2\|^2_{{\dot{B}_{p,1}^{\frac3p}}})\nonumber\\
&+C_{\widetilde{\sigma}}\|(a^1,a^2)\|_{\widetilde{L}_t^{\infty}({B}_{2,1}^{\frac32})}
\exp\{C \|\nabla u^2\|_{{L_\tau^1({B_{p,1}^{\frac3p}})}}\}
\| a^1\|_{{L}_\tau^{\infty}({B}_{2,1}^{\frac32})}\| B^1\|_{{\dot{B}_{p,1}^{1+\frac3p}}}\nonumber\\
&+(\| u^2\|_{{B_{p,1}^{\frac3p}}}+\| u^1\|^2_{{\dot{B}_{p,1}^{\frac3p}}}+\| u^2\|^2_{{\dot{B}_{p,1}^{\frac3p}}})\nonumber\\
&+(\|\Delta u^1\|_{{\dot{B}_{p,1}^{-1+\frac3p}}}+\|\nabla\Pi^1\|_{{\dot{B}_{p,1}^{-1+\frac3p}}} +\|B^1\|_{{\dot{B}_{p,1}^{-1+\frac3p}}}\|B^1\|_{{\dot{B}_{p,1}^{1+\frac3p}}}).
\end{align}
Applying Gronwall's inequality and using (\ref{buyaoshan}) implies
$\delta a=\delta u = \delta B= 0$
for all $t \in [0, T ].$ This concludes the proof to the uniqueness part of Theorem \ref{zhuyaodingli}.
\subsection{Higher regularity part of the solution}
Let $(a^n, u^n,B^n,\Pi^n)$  be the approximate solutions of (\ref{mhdmoxing})
constructed in Step 2 of Subsection 5.1. Then for $0 < \tau < t_0 < t\le T^\ast$, with $T^\ast$
being determined by (\ref{ABA35}), we deduce by a similar proof of (\ref{moniguji}), (\ref{suduguji}) that
\begin{align}\label{}
&\|(u^n,B^n)\|_{\widetilde{L}^\infty([\tau,t];\dot{B}_{p,1}^{\frac{3}{p}})}+\|(u^n,B^n)\|_{L^1([\tau,t];\dot{B}_{p,1}^{2+\frac{3}{p}})}
+\|\nabla \Pi^n\|_{L^1([\tau,t];\dot{B}_{p,1}^{\frac{3}{p}})} \nonumber\\
\lesssim&  \|a_0\|_{\dot{B}_{q,1}^{\frac{3}{q}}}(\|(u^n(\tau),B^n(\tau))\|_{\dot{B}_{p,1}^{{\frac{3}{p}}}}+
\|(u^n(\tau),B^n(\tau))\|_{\dot{B}_{p,1}^{-1+\frac{3}{p}}})
\nonumber\\
&\times\exp\{ \|(u^n,B^n)\|_{L^1([\tau,t];\dot{B}_{p,1}^{1+\frac{3}{p}})}\}.
\end{align}
Integrating the above inequality for $\tau$ over $[0, t_0]$, and then dividing the resulting
inequality by $t_0$ lead to
\begin{align}\label{bande}
&\|(u^n,B^n)\|_{\widetilde{L}^\infty([t_0,t];\dot{B}_{p,1}^{\frac{3}{p}})}+\|(u^n,B^n)\|_{L^1([t_0,t];\dot{B}_{p,1}^{2+\frac{3}{p}})}
+\|\nabla \Pi^n\|_{L^1([t_0,t];\dot{B}_{p,1}^{\frac{3}{p}})} \nonumber\\
\lesssim&  \|a_0\|_{\dot{B}_{q,1}^{\frac{3}{q}}}
\|(u_0,B_0)\|_{\dot{B}_{p,1}^{-1+\frac{3}{p}}}(1+1/{\sqrt{t_0}})
\exp\{C\|(u_0,B_0)\|_{\dot{B}_{p,1}^{-1+\frac{3}{p}}}\}.
\end{align}

\begin{center}
\section{Global Well-posedness of Theorem \ref{zhuyaodingli}}
\end{center}

In this section, we will give the proof of the global well-posedness of Theorem \ref{zhuyaodingli} under the assumption that
$\|u_0\|_{\dot{B}_{p,1}^{-1+\frac{3}{p}}}+\|B_0\|_{\dot{B}_{p,1}^{-1+\frac{3}{p}}} $
is sufficiently small.
By a similar proof of Theorem 4.1 in \cite{abidi2013}, we can get, for $a_0 \in{B}_{q,1}^{\frac{3}{q}}$,  if $u_0 \in \dot{B}_{p,1}^{-1+\frac{3}{p}}, B_0 \in \dot{B}_{p,1}^{-1+\frac{3}{p}}$,
sufficiently small, (\ref{mhdmoxing}) has a unique local
solution $(a, u, B)$ satisfying
 \begin{align*}
&a\in C_b([0, T^\ast ]; B_{q,1}^{\frac{3}{q}}(\R^3), \hspace{0.5cm} u\in C_b([0, T^\ast ]; \dot{B}_{p,1}^{-1+\frac{3}{p}}(\R^3)\cap L^1([0,T^\ast];\dot{B}_{p,1}^{1+\frac{3}{p}}(\R^3)),\nonumber\\
&B\in C_b([0, T^\ast ]; \dot{B}_{p,1}^{-1+\frac{3}{p}}(\R^3)\cap L^1([0,T^\ast];\dot{B}_{p,1}^{1+\frac{3}{p}}(\R^3)),
\end{align*}
for some $T^\ast>1$. In what follows, we will prove $T^\ast=\infty$.
Let $u=v+w, {B}={h}+b$ where $(v,{h})$ satisfies the following equations
\begin{equation}\label{mhd1}
\left\{\begin{aligned}
&v_t+v\cdot\nabla v-\Delta v-{h}\cdot\nabla {h}+\nabla P_v=0,\\
&{h}_t-\Delta {h}+v\cdot\nabla {h}-{h}\cdot\nabla v=0,\\
&\mathrm{div}v=0,\mathrm{div} {h}=0,\\
&v|_{t=t_1}=u(t_1),{h}|_{t=t_1}={B}(t_1).
\end{aligned}\right.
\end{equation}
Then $(\rho,w,b)$ solves the equations
\begin{equation}\label{mhd2}
\left\{\begin{aligned}
&\rho w_t+\rho(v+w)\cdot\nabla w-\Delta w+\nabla P_w={h}\cdot\nabla b+b\cdot\nabla {h}+b\cdot\nabla b\\
&\hspace{6.2cm}  +(1-\rho)(v_t+v\cdot\nabla v)-\rho w\cdot\nabla v,\\
&b_t-\Delta b+w\cdot\nabla {h}+w\cdot\nabla b+v\cdot\nabla b-{h}\cdot\nabla w-b\cdot\nabla w-b\cdot\nabla v=0,\\
&\mathrm{div}w=0,\mathrm{div} b=0,\\
&\rho|_{t=t_1}=\rho(t_1),w|_{t=t_1}=0,b|_{t=t_1}=0.
\end{aligned}\right.
\end{equation}

Note  that  $\|u(t_1)\|_{\dot{B}_{p,1}^{-1+\frac{3}{p}}\cap\dot{B}_{p,1}^{2+\frac{3}{p}}}+ \|{B}(t_1)\|_{\dot{B}_{p,1}^{-1+\frac{3}{p}}\cap\dot{B}_{p,1}^{2+\frac{3}{p}}} $ is very small, provided that
$\|u_0\|_{\dot{B}_{p,1}^{-1+\frac{3}{p}}}+\|B_0\|_{\dot{B}_{p,1}^{-1+\frac{3}{p}}}$
is sufficiently small. It follows from the classical theory of MHD
equations (see \cite{miaochangxing}) that (\ref{mhd1}) has a unique global solution $$(v,{h})\in C([t_1,+\infty];  \dot{B}_{p,1}^{-1+\frac{3}{p}} )\cap L^1([t_1,+\infty]; \dot{B}_{p,1}^{1+\frac{3}{p}} )\times ([t_1,+\infty];  \dot{B}_{p,1}^{-1+\frac{3}{p}} )\cap L^1([t_1,+\infty]; \dot{B}_{p,1}^{1+\frac{3}{p}} )$$ satisfying
\begin{align}\label{Q1}
&\|v\|_{\widetilde{L}^\infty([t_1,+\infty];\dot{B}_{p,1}^{-1+\frac{3}{p}})}+\|{h}\|_{\widetilde{L}^\infty([t_1,+\infty];\dot{B}_{p,1}^{-1+\frac{3}{p}})}
+\|v\|_{L^1([t_1,+\infty];\dot{B}_{p,1}^{1+\frac{3}{p}})}\nonumber\\
&~~~+\|{h}\|_{L^1([t_1,+\infty];\dot{B}_{p,1}^{1+\frac{3}{p}})}+
\|\nabla P_v\|_{L^1([t_1,+\infty];\dot{B}_{p,1}^{-1+\frac{3}{p}})}\nonumber\\
\lesssim&  (\|u(t_1)\|_{\dot{B}_{p,1}^{-1+\frac{3}{p}}}+\|{B}(t_1)\|_{\dot{B}_{p,1}^{-1+\frac{3}{p}}})
\end{align}
 and
\begin{align}\label{Q2}
&\|v_t\|_{L^1([t_1,+\infty];\dot{B}_{p,1}^{-1+\frac{3}{p}})}+\|{h}_t\|_{L^1([t_1,+\infty];\dot{B}_{p,1}^{-1+\frac{3}{p}})}\nonumber\\
\lesssim& (\|u(t_1)\|_{\dot{B}_{p,1}^{-1+\frac{3}{p}}}+\|{B}(t_1)\|_{\dot{B}_{p,1}^{-1+\frac{3}{p}}})
+\|\mathrm{div} (v\otimes v)\|_{L^1([t_1,+\infty];\dot{B}_{p,1}^{-1+\frac{3}{p}})}+\|\mathrm{div} ({h}\otimes {h})\|_{L^1([t_1,+\infty];\dot{B}_{p,1}^{-1+\frac{3}{p}})}\nonumber\\
&+\|\mathrm{div} (v\otimes {h})\|_{L^1([t_1,+\infty];\dot{B}_{p,1}^{-1+\frac{3}{p}})}
+\|\mathrm{div} ({h}\otimes v)\|_{L^1([t_1,+\infty];\dot{B}_{p,1}^{-1+\frac{3}{p}})}\nonumber\\
\lesssim& (\|u(t_1)\|_{\dot{B}_{p,1}^{-1+\frac{3}{p}}}+\|{B}(t_1)\|_{\dot{B}_{p,1}^{-1+\frac{3}{p}}}).
\end{align}
With $(v,{h})$ thus obtained, we denote $w=u-v,b={B}-{h}$. Then thanks to (\ref{mhd1}) and (\ref{mhd2}).
The proof of Theorem \ref{zhuyaodingli} reduces to proving the global well-posedness
of (\ref{mhd2}). For simplicity, in what follows, we just present the a priori
estimates for smooth enough solutions of (\ref{mhd2}) on $[0, T^\ast)$.
\vspace{0.5cm}
\subsection{The higher regularities of $(v,{h})$.}

\begin{proposition}\label{Q3}
Let $(v,{h},\nabla P_v)$ be the unique global solution of (\ref{mhd1}) which satisfies (\ref{Q1}) and
(\ref{Q2}). Then for $s_1\in [\frac{3}{p},2+\frac{3}{p}] $ and $s_2\in [-1+\frac{3}{p},\frac{3}{p}]$, there hold
\begin{align}\label{Q4}
&\|v\|_{\widetilde{L}^\infty([t_1,+\infty];\dot{B}_{p,1}^{s_1})}+\|{h}\|_{\widetilde{L}^\infty([t_1,+\infty];\dot{B}_{p,1}^{s_1})}
+\|v\|_{L^1([t_1,+\infty];\dot{B}_{p,1}^{{s_1+2}})}\nonumber\\
&\quad+\|{h}\|_{L^1([t_1,+\infty];\dot{B}_{p,1}^{s_1+2})}+
\|\nabla P_v\|_{L^1([t_1,+\infty];\dot{B}_{p,1}^{s_1})}\nonumber\\
\le& C (\|u_0\|_{\dot{B}_{p,1}^{-1+\frac{3}{p}}}+\|B_0\|_{\dot{B}_{p,1}^{-1+\frac{3}{p}}})
\end{align}
and
\begin{align}\label{Q5}
&\|v_t\|_{\widetilde{L}^\infty([t_1,+\infty];\dot{B}_{p,1}^{s_2})}+\|{h}_t\|_{\widetilde{L}^\infty([t_1,+\infty];\dot{B}_{p,1}^{s_2})}+
\| v_t\|_{L^1([t_1,+\infty];\dot{B}_{p,1}^{s_2+2})}\nonumber\\
&\quad+\| {h}_t\|_{L^1([t_1,+\infty];\dot{B}_{p,1}^{s_2+2})}
+\|\partial_t\nabla P_v\|_{L^1([t_1,+\infty];\dot{B}_{p,1}^{s_2})}
\nonumber\\
\le& C (\|u_0\|_{\dot{B}_{p,1}^{-1+\frac{3}{p}}}+\|B_0\|_{\dot{B}_{p,1}^{-1+\frac{3}{p}}}).
\end{align}
\end{proposition}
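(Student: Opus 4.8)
The plan is to bootstrap the regularity of the solution $(v,h,\nabla P_v)$ of the constant-density MHD system \eqref{mhd1} step by step, starting from the baseline estimate \eqref{Q1} in $\dot{B}_{p,1}^{-1+\frac3p}$, and using the classical $L^p$ maximal regularity for the heat semigroup in Chemin--Lerner spaces. First I would establish \eqref{Q4} for the endpoint $s_1=\frac3p$: writing the $v$-equation as a heat equation with source $-\mathbb{P}\,\mathrm{div}(v\otimes v-h\otimes h)$ (the pressure being eliminated by the Leray projector), and the $h$-equation with source $-\mathrm{div}(v\otimes h-h\otimes v)$, I apply the standard parabolic estimate together with the product law \eqref{product inequality} (in the form $\|fg\|_{\dot{B}_{p,1}^{\frac3p}}\lesssim\|f\|_{\dot{B}_{p,1}^{\frac3p}}\|g\|_{\dot{B}_{p,1}^{\frac3p}}$, valid since $p<6$) and the interpolation $\|f\|_{L^2_t(\dot{B}_{p,1}^{\frac3p})}\lesssim\|f\|_{\widetilde L^\infty_t(\dot{B}_{p,1}^{-1+\frac3p})}^{1/2}\|f\|_{L^1_t(\dot{B}_{p,1}^{1+\frac3p})}^{1/2}$ to absorb the quadratic terms; the smallness of $\|u(t_1)\|_{\dot{B}_{p,1}^{-1+\frac3p}}+\|B(t_1)\|_{\dot{B}_{p,1}^{-1+\frac3p}}$ closes the estimate globally on $[t_1,\infty)$. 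Recovering the full range $s_1\in[\frac3p,2+\frac3p]$ and then $s_2\in[-1+\frac3p,\frac3p]$ is done by a finite induction: once $v,h$ are controlled in $\dot{B}_{p,1}^{s}$, the bilinear terms live in $\dot{B}_{p,1}^{s-1}$ (again by \eqref{product inequality}, using that one factor is already in $\dot{B}_{p,1}^{\frac3p}$), so parabolic smoothing gives control at level $s+2$ in $L^1_t$, and interpolation bumps the $\widetilde L^\infty_t$ regularity up by one derivative; iterating a bounded number of times covers $[\frac3p,2+\frac3p]$.

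For the time-derivative bounds \eqref{Q5}, I would differentiate \eqref{mhd1} in $t$ to see that $(v_t,h_t)$ solves the \emph{linearized} MHD system around $(v,h)$ with zero forcing apart from the transport/stretching terms $v_t\cdot\nabla v$, $h_t\cdot\nabla h$, etc., all of which are bilinear in $(v_t,h_t)$ and $(\nabla v,\nabla h)$. Using \eqref{Q4} with $s_1=\frac3p$ and $s_1=2+\frac3p$, the coefficients $\nabla v,\nabla h$ are small in $L^1_t(\dot{B}_{p,1}^{s})$ for the relevant $s$, so the same parabolic-energy scheme applies with the initial data $v_t(t_1),h_t(t_1)$ — which by the equation equal $\Delta u(t_1)+\cdots$ and are small in $\dot{B}_{p,1}^{-1+\frac3p}$ by \eqref{Q2} together with the already-established higher regularity of $u(t_1)$. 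This yields \eqref{Q5} for $s_2=-1+\frac3p$, and the remaining $s_2\in(-1+\frac3p,\frac3p]$ follows by the same bootstrap as before. The pressure term $\nabla P_v$ (resp.\ $\partial_t\nabla P_v$) is handled throughout by taking divergence of the $v$-equation, so that $\nabla P_v=-\nabla(-\Delta)^{-1}\mathrm{div}\,\mathrm{div}(v\otimes v-h\otimes h)$, and estimating the right-hand side by the product laws at the appropriate regularity.

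The main obstacle I anticipate is bookkeeping the interplay between the three index ranges (the $s_1$-range reaching up to $2+\frac3p$, which is two derivatives above the critical level, and the $s_2$-range which straddles the critical level) so that every bilinear term genuinely closes: one must check at each step that the product law \eqref{product inequality} is applicable, i.e.\ that one factor sits in a space with nonnegative smoothness index $\le\frac3p$ and the sum of indices is positive, which constrains how many bootstrap steps are needed and in which order $s_1$ then $s_2$ must be treated. A secondary technical point is that all estimates must be \emph{global} in time, so the smallness of $\|u(t_1)\|_{\dot{B}_{p,1}^{-1+\frac3p}}+\|B(t_1)\|_{\dot{B}_{p,1}^{-1+\frac3p}}$ — itself a consequence of the smallness of $\|u_0\|_{\dot{B}_{p,1}^{-1+\frac3p}}+\|B_0\|_{\dot{B}_{p,1}^{-1+\frac3p}}$ and the higher-regularity bound \eqref{bande} at time $t_1$ — has to be propagated through every induction step without any time-dependent constant creeping in; this is why the argument relies on the nonlinearity being quadratic and the linear part being the (dissipative) heat flow, exactly as in the proof of Theorem~4.1 of \cite{abidi2013}.
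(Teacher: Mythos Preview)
Your proposal is correct and follows exactly the standard route the paper has in mind (the authors write only ``The proof of this proposition is rather standard, we omit the details here''): maximal regularity for the heat semigroup in Chemin--Lerner spaces, product estimates of the type in Lemma~\ref{daishu}, interpolation, and the global smallness of $(v,h)$ in the critical norm coming from \eqref{Q1}.

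Two small remarks. First, the ``finite induction'' you describe is not really needed: for every fixed $s_1\in[\frac3p,2+\frac3p]$ one can close the estimate directly. Using the tame product law $\|v\otimes v\|_{\dot B^{s_1+1}_{p,1}}\lesssim\|v\|_{L^\infty}\|v\|_{\dot B^{s_1+1}_{p,1}}$ together with $\|v\|_{L^2_t(\dot B^{3/p}_{p,1})}\lesssim\|v\|_{\widetilde L^\infty_t(\dot B^{-1+3/p}_{p,1})}^{1/2}\|v\|_{L^1_t(\dot B^{1+3/p}_{p,1})}^{1/2}$ and $\|v\|_{L^2_t(\dot B^{s_1+1}_{p,1})}\lesssim\|v\|_{\widetilde L^\infty_t(\dot B^{s_1}_{p,1})}^{1/2}\|v\|_{L^1_t(\dot B^{s_1+2}_{p,1})}^{1/2}$, one gets the nonlinear term bounded by $C\varepsilon$ times the unknown at level $s_1$, and absorbs. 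Second, your sentence ``the bilinear terms live in $\dot B^{s-1}_{p,1}$, so parabolic smoothing gives control at level $s+2$'' has an off-by-one slip: a source in $\dot B^{s-1}_{p,1}$ only yields $L^1_t(\dot B^{s+1}_{p,1})$; what you actually need (and what the argument gives) is the source in $L^1_t(\dot B^{s_1}_{p,1})$. For \eqref{Q5} your time-differentiation scheme works; alternatively one may simply read $v_t=\Delta v-\mathbb P(v\cdot\nabla v-h\cdot\nabla h)$ off the equation and estimate each term using \eqref{Q4}, which is slightly quicker.
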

The proof of this proposition is rather standard, we omit the details here.
By Proposition \ref{Q3}, we can easily get the following corollary:
\begin{corollary}\label{Q6}
Under the assumptions of Proposition \ref{Q3}, one has
\begin{align}\label{Q7}
&\|\nabla v\|_{L^2([t_1,+\infty];L^\infty)}+\|\nabla {h}\|_{L^2([t_1,+\infty];L^\infty)}+\|v_t+v\cdot\nabla v\|_{L^2([t_1,+\infty];L^\infty)}\nonumber\\
\le& C (\|u_0\|_{\dot{B}_{p,1}^{-1+\frac{3}{p}}}+\|B_0\|_{\dot{B}_{p,1}^{-1+\frac{3}{p}}}).
\end{align}
\end{corollary}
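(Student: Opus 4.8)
The plan is to derive the three $L^2_{t_1}(L^\infty)$ bounds by combining the higher-regularity estimates of Proposition \ref{Q3} with Besov embeddings and interpolation. The key point is that in $\R^3$ one has the embedding $\dot B_{p,1}^{1+\frac3p}(\R^3)\hookrightarrow L^\infty(\R^3)$, and more precisely $\dot B_{p,1}^{1+\frac3p}\hookrightarrow \dot W^{1,\infty}$, so that $\|\nabla f\|_{L^\infty}\lesssim \|f\|_{\dot B_{p,1}^{1+\frac3p}}$ for any $f$; hence it suffices to control $v$ and $h$ in $L^2_{t_1}(\dot B_{p,1}^{1+\frac3p})$.

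First I would handle $\nabla v$ and $\nabla h$. Applying Proposition \ref{Q3} with the endpoint choices $s_1=-1+\frac3p$ (giving control of $v,h$ in $L^\infty_{t_1}(\dot B_{p,1}^{-1+\frac3p})$) and $s_1=\frac3p$ (giving control in $L^1_{t_1}(\dot B_{p,1}^{2+\frac3p})$), the interpolation inequality in the Chemin--Lerner spaces (the Remark following the definition of $\widetilde L^\lambda_T$) yields
\begin{align*}
\|v\|_{L^2_{t_1}(\dot B_{p,1}^{1+\frac3p})}\lesssim \|v\|_{\widetilde L^\infty_{t_1}(\dot B_{p,1}^{-1+\frac3p})}^{\frac12}\|v\|_{L^1_{t_1}(\dot B_{p,1}^{2+\frac3p})}^{\frac12}\lesssim \|u_0\|_{\dot B_{p,1}^{-1+\frac3p}}+\|B_0\|_{\dot B_{p,1}^{-1+\frac3p}},
\end{align*}
and identically for $h$. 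Composing with $\dot B_{p,1}^{1+\frac3p}\hookrightarrow \dot W^{1,\infty}$ gives the first two terms in \eqref{Q7}.

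For the last term, I would write $v_t+v\cdot\nabla v = \Delta v + h\cdot\nabla h - \nabla P_v$ using the first equation of \eqref{mhd1}, and estimate each piece in $L^2_{t_1}(L^\infty)$ via $L^2_{t_1}(\dot B_{p,1}^{\frac3p})\hookrightarrow L^2_{t_1}(L^\infty)$. The term $\Delta v$ is controlled by $\|v\|_{L^2_{t_1}(\dot B_{p,1}^{2+\frac3p})}$, which follows by interpolating the $s_1=-1+\frac3p$ and $s_1=2+\frac3p$ (or $s_1=\frac3p$ and the $L^1$ bound at $s_1=\frac3p$, i.e.\ $L^1_{t_1}(\dot B_{p,1}^{2+\frac3p})$) cases of Proposition \ref{Q3}; alternatively one may use the $v_t$-estimate \eqref{Q5} with $s_2$ near $-1+\frac3p$. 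The term $\nabla P_v$ is handled by interpolating the $\nabla P_v$ bounds in \eqref{Q4} between $s_1=-1+\frac3p$ and $s_1=\frac3p+1$ to land in $L^2_{t_1}(\dot B_{p,1}^{\frac3p})$. The bilinear term $h\cdot\nabla h$ is estimated by the product law of Lemma \ref{daishu}, $\|h\cdot\nabla h\|_{\dot B_{p,1}^{\frac3p}}\lesssim \|h\|_{\dot B_{p,1}^{\frac3p}}\|h\|_{\dot B_{p,1}^{1+\frac3p}}$, followed by Cauchy--Schwarz in time and the $L^\infty_{t_1}(\dot B_{p,1}^{\frac3p})\cap L^2_{t_1}(\dot B_{p,1}^{1+\frac3p})$ bounds on $h$ obtained above. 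Collecting these gives the third term in \eqref{Q7}.

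The only mild obstacle is bookkeeping: one must check that all the intermediate Besov indices used lie in the admissible windows $s_1\in[\frac3p,2+\frac3p]$, $s_2\in[-1+\frac3p,\frac3p]$ of Proposition \ref{Q3} (and that $1<p<\frac{5+\sqrt{17}}{2}<6$ keeps the embeddings valid), and that the interpolation exponents sum correctly for the $L^2$-in-time norms. No new analytic input beyond Proposition \ref{Q3}, the standard Besov embeddings, the Chemin--Lerner interpolation, and Lemma \ref{daishu} is needed, so the proof is indeed routine, which is why the paper omits the details.
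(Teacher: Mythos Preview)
Your treatment of $\nabla v$ and $\nabla h$ is fine: interpolating the $\widetilde L^\infty$ and $L^1$ bounds from Proposition~\ref{Q3} (together with \eqref{Q1} for the lowest index) and using $\dot B_{p,1}^{1+\frac3p}\hookrightarrow \dot W^{1,\infty}$ gives the desired $L^2_tL^\infty_x$ control. Note only that $s_1=-1+\tfrac3p$ is not in the range $[\tfrac3p,2+\tfrac3p]$ of \eqref{Q4}; you should cite \eqref{Q1} there, or simply interpolate the $s_1=\tfrac3p$ case $\widetilde L^\infty(\dot B_{p,1}^{3/p})\cap L^1(\dot B_{p,1}^{2+3/p})$ directly.

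There is a genuine gap in your handling of $\nabla P_v$. Proposition~\ref{Q3} only yields $L^1$-in-time bounds on $\nabla P_v$ (for each admissible $s_1$), and interpolating two $L^1_t$ estimates cannot produce an $L^2_t$ estimate: in the Chemin--Lerner interpolation, $\tfrac1\lambda=\tfrac\theta{\lambda_1}+\tfrac{1-\theta}{\lambda_2}$ forces $\lambda=1$ when $\lambda_1=\lambda_2=1$. The easy fix is to avoid $\nabla P_v$ altogether and estimate $v_t$ and $v\cdot\nabla v$ separately. For $v_t$, \eqref{Q5} with $s_2=-1+\tfrac3p$ gives $v_t\in\widetilde L^\infty(\dot B_{p,1}^{-1+3/p})\cap L^1(\dot B_{p,1}^{1+3/p})$, and interpolation yields $v_t\in\widetilde L^2(\dot B_{p,1}^{3/p})\hookrightarrow L^2_tL^\infty_x$. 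For $v\cdot\nabla v$, your product-law argument $\|v\cdot\nabla v\|_{\dot B_{p,1}^{3/p}}\lesssim\|v\|_{\dot B_{p,1}^{3/p}}\|v\|_{\dot B_{p,1}^{1+3/p}}$ together with H\"older in time works. Alternatively, since $\diverg v_t=\diverg\Delta v=0$, one has $\nabla P_v=-\nabla(-\Delta)^{-1}\diverg(v\cdot\nabla v-h\cdot\nabla h)$, and the $L^2_t(\dot B_{p,1}^{3/p})$ bound then follows from the same bilinear estimate. Either route closes the argument; the paper omits details, but your stated interpolation for $\nabla P_v$ as written does not.
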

\subsection{The $L^2$ estimate of  $(w,b)$.}
\begin{proposition}\label{Q8}
If the conditions of Theorem \ref{zhuyaodingli} are satisfied, there holds for $t_1 < t < T^\ast$
\begin{align}\label{Q8+1}
&\|w\|_{L^\infty([t_1,t];L^2)}+\|b\|_{L^\infty([t_1,t];L^2)}+ \|\nabla w\|_{L^2([t_1,t];L^2)}+ \|\nabla b\|_{L^2([t_1,t];L^2)}\nonumber\\
\le& C (\|u_0\|_{\dot{B}_{p,1}^{-1+\frac{3}{p}}}+\|B_0\|_{\dot{B}_{p,1}^{-1+\frac{3}{p}}}),
\end{align}
with $C$ being independent of  $t$.
\end{proposition}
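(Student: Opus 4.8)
The plan is to run an $L^2$ energy estimate on the perturbation system \eqref{mhd2}, exploiting the algebraic structure of the MHD nonlinearity together with the smallness and global integrability of $(v,h)$ supplied by Proposition \ref{Q3} and Corollary \ref{Q6}, and the fact that $(w,b)$ starts from zero at $t=t_1$. First I would take the $L^2(\R^3)$ inner product of the $w$-equation with $w$ and of the $b$-equation with $b$, and add. Since $\mathrm{div}\,w=\mathrm{div}\,b=\mathrm{div}\,v=0$, the pressure term $\langle\nabla P_w,w\rangle$ drops out and the dissipative terms yield $\|\nabla w\|_{L^2}^2+\|\nabla b\|_{L^2}^2$. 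For the material derivative I would use the density equation $\partial_t\rho+(v+w)\cdot\nabla\rho=0$ together with $\mathrm{div}(v+w)=0$, which give $\mathrm{div}\big(\rho(v+w)\big)=-\partial_t\rho$ and hence $\langle\rho w_t,w\rangle+\langle\rho(v+w)\cdot\nabla w,w\rangle=\tfrac12\frac{d}{dt}\int_{\R^3}\rho|w|^2\,dx$. Because $1+a\ge\kappa>0$ while $\|a\|_{L^\infty}$ stays bounded (from the transport equation, cf. \eqref{ABA4}), $\rho=\tfrac1{1+a}$ is bounded above and below, so $\int_{\R^3}\rho|w|^2\,dx$ is comparable to $\|w\|_{L^2}^2$.

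Next I would invoke the two structural cancellations of MHD. Integrating by parts with $\mathrm{div}\,h=0$ gives $\langle h\cdot\nabla b,w\rangle+\langle h\cdot\nabla w,b\rangle=0$, and with $\mathrm{div}\,b=0$ gives $\langle b\cdot\nabla b,w\rangle+\langle b\cdot\nabla w,b\rangle=0$; this kills the leading magnetic coupling and all the cubic self-interaction of $(w,b)$. Also $\langle v\cdot\nabla b,b\rangle=\langle w\cdot\nabla b,b\rangle=0$. What is left is
\begin{align*}
&\frac12\frac{d}{dt}\Big(\int_{\R^3}\rho|w|^2\,dx+\|b\|_{L^2}^2\Big)+\|\nabla w\|_{L^2}^2+\|\nabla b\|_{L^2}^2\\
&\quad=\langle b\cdot\nabla h,w\rangle-\langle w\cdot\nabla h,b\rangle-\langle\rho\,w\cdot\nabla v,w\rangle+\langle b\cdot\nabla v,b\rangle+\langle(1-\rho)(v_t+v\cdot\nabla v),w\rangle.
\end{align*}

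The first four terms on the right are quadratic in $(w,b)$ with a coefficient $\nabla v$ or $\nabla h$; after the Gagliardo--Nirenberg interpolation $\|f\|_{L^4}^2\lesssim\|f\|_{L^2}^{1/2}\|\nabla f\|_{L^2}^{3/2}$ and Young's inequality I would dominate them by $\tfrac14\big(\|\nabla w\|_{L^2}^2+\|\nabla b\|_{L^2}^2\big)+C\big(\|\nabla v\|_{L^\infty}^2+\|\nabla h\|_{L^\infty}^2\big)\big(\|w\|_{L^2}^2+\|b\|_{L^2}^2\big)$, absorbing the gradient part into the dissipation. The last term I would control using that $v_t+v\cdot\nabla v=\Delta v-\nabla P_v+h\cdot\nabla h$ is small in time-integrable norms by Proposition \ref{Q3} (and \eqref{Q1}--\eqref{Q2}) together with the boundedness of $1-\rho=a/(1+a)$, pairing it against $w$ via the $L^2$ bound or, if needed, via the duality between $\dot B_{p,1}^{-1+\frac3p}$ and its dual interpolated with $w\in L^\infty_t L^2\cap L^2_t\dot H^1$ (the relevant Sobolev embeddings being available for $3\le p<\tfrac{5+\sqrt{17}}{2}$). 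Then integrating in time from $t_1$, using $w(t_1)=b(t_1)=0$, and applying Grönwall closes the estimate: $\|\nabla v\|_{L^\infty}^2+\|\nabla h\|_{L^\infty}^2\in L^1([t_1,\infty))$ with small norm by Corollary \ref{Q6}, and the source is $O\big(\|u_0\|_{\dot B_{p,1}^{-1+\frac3p}}+\|B_0\|_{\dot B_{p,1}^{-1+\frac3p}}\big)$, uniformly in $t<T^\ast$.

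\textbf{The main obstacle} I anticipate is keeping the bound genuinely uniform in $t$. The coefficient $1-\rho$ is only bounded, not small, so the forcing term $\langle(1-\rho)(v_t+v\cdot\nabla v),w\rangle$ must be tamed entirely through the smallness of $v_t+v\cdot\nabla v$ in time-integrable spaces coming from the higher-regularity estimates of Proposition \ref{Q3}; likewise, the borderline quadratic terms must genuinely be absorbed into the parabolic smoothing, which in practice forces one to run the whole argument as a continuity/bootstrap argument on the size of $\|(w,b)\|_{L^\infty([t_1,t];L^2)}+\|\nabla(w,b)\|_{L^2([t_1,t];L^2)}$ rather than as a bare Grönwall step. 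Once this bootstrap is set up, the estimate \eqref{Q8+1} with $C$ independent of $t$ follows.
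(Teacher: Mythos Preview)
Your overall strategy---energy identity on \eqref{mhd2}, the MHD cancellations $\langle h\cdot\nabla b,w\rangle+\langle h\cdot\nabla w,b\rangle=0$, $\langle b\cdot\nabla b,w\rangle+\langle b\cdot\nabla w,b\rangle=0$, $\langle v\cdot\nabla b,b\rangle=\langle w\cdot\nabla b,b\rangle=0$, and Gr\"onwall---is exactly what the paper does. Two corrections are in order. First, the Gagliardo--Nirenberg/absorption step for the cross terms $\langle b\cdot\nabla h,w\rangle$, $\langle w\cdot\nabla h,b\rangle$, $\langle\rho w\cdot\nabla v,w\rangle$, $\langle b\cdot\nabla v,b\rangle$ is unnecessary: the paper simply uses $|\langle b\cdot\nabla h,w\rangle|\le\|\nabla h\|_{L^\infty}\|b\|_{L^2}\|w\|_{L^2}$, etc., so the resulting differential inequality is \emph{linear} in $\|\sqrt{\rho}\,w\|_{L^2}^2+\|b\|_{L^2}^2$. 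A direct integrating-factor Gr\"onwall (see \eqref{Q14}--\eqref{Q15}) then closes it; no bootstrap or continuity argument is needed.

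Second, your treatment of the forcing term $\langle(1-\rho)(v_t+v\cdot\nabla v),w\rangle$ has a genuine gap. Relying only on $\|1-\rho\|_{L^\infty}$ forces you to place $v_t+v\cdot\nabla v$ in some $L^r_x$ with $r\le2$ (to pair against $w\in L^2\cap L^6$), but the estimates of Proposition~\ref{Q3} and Corollary~\ref{Q6} live in $\dot B_{p,1}^s$ with $p\ge3$ or in $L^\infty_x$, neither of which embeds into such an $L^r$; the duality with $\dot B_{p,1}^{-1+3/p}$ you allude to does not help since $(1-\rho)w$ has no low-frequency decay. The paper resolves this by writing $1-\rho=\rho a$ and using the \emph{spatial} $L^2$-integrability of $a$ (conserved by the divergence-free transport, $\|a(t)\|_{L^2}=\|a_0\|_{L^2}$, available here because $a_0\in B_{q,1}^{3/q}$ in the global-existence regime):
\[
|\langle(1-\rho)(v_t+v\cdot\nabla v),w\rangle|\le C\|\sqrt{\rho}\,w\|_{L^2}\,\|a_0\|_{L^2}\,\|v_t+v\cdot\nabla v\|_{L^\infty},
\]
and then $\|v_t+v\cdot\nabla v\|_{L^2([t_1,\infty);L^\infty)}\lesssim\|(u_0,B_0)\|_{\dot B_{p,1}^{-1+3/p}}$ from Corollary~\ref{Q6} supplies both the smallness and the time integrability. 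This is the missing ingredient in your plan.
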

\begin{proof} \quad Firstly, thanks to $1+\inf_{x\in\R^3}a_0(x)\ge \kappa>0$, we can get from the transport equation of (\ref{mhdmoxing})
that
\begin{align}\label{Q9}
(1+\|a_0\|_{{B}_{q,1}^{\frac3q}})^{-1}\le \rho(t,x)\le \underline{d}^{-1},
\end{align}
from which and $1-\rho = \rho a,$ we get by taking the $L^2$ inner product of the $w$ equation of (\ref{mhd2}) with $w$ and of the $b$ equation of (\ref{mhd2}) with $b$ that
\begin{align}\label{Q10}
&\frac12\frac{d}{dt}(\|\sqrt{\rho}w\|_{L^2}^2+\|b\|_{L^2})^2+\|\nabla w\|_{L^2}^2+\|\nabla b\|_{L^2}^2\nonumber\\
=&\int_{\R^3}((1-\rho)(v_t+v\cdot\nabla v)-\rho w\cdot\nabla v)\cdot wdx
+\int_{\R^3}{h}\cdot\nabla b\cdot w dx+\int_{\R^3}b\cdot\nabla {h}\cdot w dx\nonumber\\
&+\int_{\R^3}b\cdot\nabla b\cdot w dx
-\int_{\R^3}w\cdot\nabla {h}\cdot b dx-\int_{\R^3}w\cdot\nabla b\cdot b dx-\int_{\R^3}v\cdot\nabla b\cdot b dx
\nonumber\\
&+\int_{\R^3}{h}\cdot\nabla w\cdot b dx+\int_{\R^3}b\cdot\nabla w\cdot b dx+\int_{\R^3}b\cdot\nabla v\cdot b dx\nonumber\\
\triangleq&\sum_{j=1}^{10}I_j.
\end{align}
Integrating by parts, we can get $I_2+I_8=I_4+I_9=I_6=I_7=0$.
Using  the {H}$\mathrm{\ddot{o}}$lder inequality, we have
\begin{align}\label{Q11}
|I_1|=\bigg|\int_{\R^3}((1-\rho)(v_t+v\cdot\nabla v)-\rho w\cdot\nabla v)\cdot wdx\bigg|\le C \|\sqrt{\rho}w\|_{L^2}\|a\|_{L^2}\|v_t+v\cdot\nabla v\|_{L^\infty},
\end{align}
\begin{align}\label{Q12}
|I_3+I_5+I_{10}|&=\bigg|\int_{\R^3}b\cdot\nabla {h}\cdot w dx+\int_{\R^3}w\cdot\nabla {h}\cdot b dx+\int_{\R^3}b\cdot\nabla v\cdot b dx\bigg|\nonumber\\
&\le C (\|b\|_{L^2}\|\sqrt{\rho}w\|_{L^2}\|\nabla {h}\|_{L^\infty}+ \|b\|_{L^2}^2\|\nabla v\|_{L^\infty}).
\end{align}
Substituting the above estimates into (\ref{Q10}), we have
\begin{align}\label{Q13}
&\frac12\frac{d}{dt}(\|\sqrt{\rho}w\|_{L^2}^2+\|b\|_{L^2})^2+\|\nabla w\|_{L^2}^2+\|\nabla b\|_{L^2}^2\nonumber\\
\le& C(\|\sqrt{\rho}w\|_{L^2}\|a\|_{L^2}\|v_t+v\cdot\nabla v\|_{L^\infty})+C(\|\sqrt{\rho}w\|_{L^2}+\|b\|_{L^2})(\|\nabla v\|_{L^\infty}+\|\nabla {h}\|_{L^\infty}),
\end{align}
from which, we infer for $t \in (t_1, T^\ast)$ that
\begin{align}\label{Q14}
&\frac{d}{dt}(e^{-2\int_{t_1}^t(\|\nabla v\|_{L^\infty}+\|\nabla {h}\|_{L^\infty})d\tau}(\|\sqrt{\rho}w\|_{L^2}^2+\|b\|_{L^2}^2))\nonumber\\
\le& C \|a_0\|_{L^2}e^{-2\int_{t_1}^t(\|\nabla v\|_{L^\infty}+\|\nabla {h}\|_{L^\infty})d\tau}\|\sqrt{\rho}w\|_{L^2}  \|v_t+v\cdot\nabla v\| _{\dot{B}_{p,1}^{\frac3p} }.
\end{align}
This, along with  (\ref{Q4}), implies
\begin{align}\label{Q15}
&\|\sqrt{\rho}w\|_{L^\infty([t_1,t];L^2)}^2+\|b\|_{L^\infty([t_1,t];L^2)}^2\nonumber\\
&\le Ce^{\int_{t_1}^t(\|\nabla v\|_{L^\infty}+\|\nabla {h}\|_{L^\infty})d\tau} \|v_t+v\cdot\nabla v\| _{L^1([t_1,t];{\dot{B}_{p,1}^{\frac3p} })}\nonumber\\
&\le C(\|u(t_1)\|_{\dot{B}_{p,1}^{\frac3p}}+\|{B}(t_1)\|_{\dot{B}_{p,1}^{\frac3p}})\exp\{C(\|u(t_1)\|_{\dot{B}_{p,1}^{-1+\frac{3}{p}}}
+\|{B}(t_1)\|_{\dot{B}_{p,1}^{-1+\frac{3}{p}}}) \}\nonumber\\
&\le C(\|u_0\|_{\dot{B}_{p,1}^{-1+\frac{3}{p}}}+\|B_0\|_{\dot{B}_{p,1}^{-1+\frac{3}{p}}}).
\end{align}
Taking the above estimate into (\ref{Q13}) gives rise to
\begin{align}\label{Q16}
 \|\nabla w\|_{L^2([t_1,t];L^2)}+ \|\nabla b\|_{L^2([t_1,t];L^2)}\le C(\|u_0\|_{\dot{B}_{p,1}^{-1+\frac{3}{p}}}+\|B_0\|_{\dot{B}_{p,1}^{-1+\frac{3}{p}}}).
\end{align}
\end{proof}
\subsection{The ${H}^1$ estimate of  $(w,b)$.}\label{Q17}
\begin{proposition}\label{Q18}
Under the assumptions of Theorem \ref{zhuyaodingli}, there exist two positive constants
$e_1, e_2 $ so that  for $t_1 < t < T^\ast$
\begin{align}\label{Q19}
&\|\nabla w\|^2_{L^\infty([t_1,t];L^2)}+\|\nabla b\|^2_{L^\infty([t_1,t];L^2)}
\nonumber\\
&~~~+\int_{t_1}^t(e_1(\|\partial_tw\|_{L^2}^2+\|\partial_tb\|_{L^2}^2)+e_2(\|\nabla^2w\|_{L^2}^2+\|\nabla^2b\|_{L^2}^2)
+\|\nabla P_w\|_{L^2}^2)dt'\nonumber\\
\le& C(\|u_0\|_{\dot{B}_{p,1}^{-1+\frac{3}{p}}}^2+\|B_0\|_{\dot{B}_{p,1}^{-1+\frac{3}{p}}}^2)
\end{align}
with $C$ being independent of  $t$.
\end{proposition}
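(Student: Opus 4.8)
The plan is to derive the $H^1$ bound by testing the equations of \eqref{mhd2} against $-\Delta w$ and $-\Delta b$ (equivalently, differentiating the $L^2$ estimate one order up), and then absorbing the dangerous terms using the smallness encoded in Proposition~\ref{Q8} together with the higher regularity of $(v,h)$ from Proposition~\ref{Q3} and Corollary~\ref{Q6}. Concretely, I would first take the $L^2$ inner product of the $w$-equation with $\partial_t w$ and with $-\Delta w$, and similarly for $b$; adding these (with suitable weights $e_1,e_2$ chosen small) produces on the left-hand side the quantity $\frac{d}{dt}\big(\|\nabla w\|_{L^2}^2+\|\nabla b\|_{L^2}^2\big)$ plus coercive terms $e_1\|\partial_t w\|_{L^2}^2+e_1\|\partial_t b\|_{L^2}^2+e_2\|\nabla^2 w\|_{L^2}^2+e_2\|\nabla^2 b\|_{L^2}^2+\|\nabla P_w\|_{L^2}^2$, where the pressure term is recovered by applying $\mathrm{div}$ to the $w$-equation and using the elliptic estimate for $\mathrm{div}(\rho\nabla P_w)=\cdots$ together with \eqref{Q9}.

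The right-hand side consists of nonlinear terms of three types: (i) transport/convection terms like $\rho(v+w)\cdot\nabla w$, $\rho w\cdot\nabla v$, $v\cdot\nabla b$, $w\cdot\nabla h$, etc.; (ii) Lorentz-force couplings $h\cdot\nabla b$, $b\cdot\nabla h$, $b\cdot\nabla b$ and their magnetic counterparts; and (iii) the forcing $(1-\rho)(v_t+v\cdot\nabla v)$. For (i) and (ii) I would use the three-dimensional product/interpolation inequalities $\|f\|_{L^3}\lesssim\|f\|_{L^2}^{1/2}\|\nabla f\|_{L^2}^{1/2}$ and $\|f\|_{L^6}\lesssim\|\nabla f\|_{L^2}$, together with Hölder, to bound each cubic term by $\varepsilon(\|\nabla^2 w\|_{L^2}^2+\|\nabla^2 b\|_{L^2}^2)$ plus a term of the form $g(t)(\|\nabla w\|_{L^2}^2+\|\nabla b\|_{L^2}^2)$ where $g\in L^1([t_1,T^\ast))$ is controlled via \eqref{Q7}, \eqref{Q8+1}, and the already-established $L^2$ bounds; terms where $\nabla v$ or $\nabla h$ appears in $L^\infty$ are handled directly by Corollary~\ref{Q6}, giving an $L^2_t$ (hence $L^1_t$ after Cauchy--Schwarz) coefficient. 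For (iii) I use $1-\rho=\rho a$, the bound \eqref{Q9}, $\|a\|_{L^\infty_t(L^2)}\lesssim\|a_0\|_{L^2}$, and $\|v_t+v\cdot\nabla v\|_{L^2_t(L^\infty)}\lesssim\|u_0\|+\|B_0\|$ from Corollary~\ref{Q6}. Absorbing the $\varepsilon$-terms into the coercive left-hand side and applying Gronwall's inequality then yields \eqref{Q19}, with the final constant proportional to $\|u_0\|_{\dot{B}_{p,1}^{-1+3/p}}^2+\|B_0\|_{\dot{B}_{p,1}^{-1+3/p}}^2$ because every ``bad'' term carries at least one factor that is small by the smallness hypothesis on the data.

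The main obstacle I anticipate is the term $\rho w\cdot\nabla w$ tested against $-\Delta w$ (and the analogous $b\cdot\nabla b$, $w\cdot\nabla b$ terms): here one genuinely produces $\|w\|_{L^6}\|\nabla w\|_{L^3}\|\nabla^2 w\|_{L^2}\lesssim\|\nabla w\|_{L^2}^{3/2}\|\nabla^2 w\|_{L^2}^{3/2}$, which is \emph{not} subcritical, so a naive Young's inequality does not close. The resolution is the standard one for small solutions: one keeps this term as $\|\nabla w\|_{L^2}^{1/2}\cdot\big(\|\nabla w\|_{L^2}\|\nabla^2 w\|_{L^2}\big)\cdot\|\nabla^2 w\|_{L^2}^{1/2}$ and uses that, by the $L^2$-estimate bootstrap, $\|\nabla w\|_{L^2}$ is itself small on a maximal interval, so that the coefficient in front of $\|\nabla^2 w\|_{L^2}^2$ can be made $\le e_2/2$; this forces a continuation/bootstrap argument (define $T^{\ast\ast}$ as the maximal time where $\|\nabla w\|_{L^\infty_t L^2}\le\eta$ for a fixed small $\eta$, prove the estimate improves to $\le\eta/2$, conclude $T^{\ast\ast}=T^\ast$). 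A secondary technical point is the weight $\rho$ in the time-derivative term: since $\rho$ is only $L^\infty$ in $x$ and transported by $v+w$, one must use $\partial_t\rho=-(v+w)\cdot\nabla\rho$ carefully, but $\|\rho\|_{L^\infty}$ and $\|1/\rho\|_{L^\infty}$ are bounded by \eqref{Q9}, so the coercivity $\|\sqrt\rho\,\partial_t w\|_{L^2}^2\gtrsim\|\partial_t w\|_{L^2}^2$ is uniform and no extra difficulty arises there.
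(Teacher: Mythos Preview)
Your proposal is correct and follows essentially the same strategy as the paper: test the $w$- and $b$-equations against their time derivatives and their Laplacians, combine the two estimates, run a continuation/bootstrap argument exploiting smallness, and close with Gronwall. Two small technical differences are worth noting. First, the paper tests the $w$-equation against $\frac{1}{\rho}\Delta w$ rather than $-\Delta w$; this makes the time-derivative contribution $\int\rho w_t\cdot\tfrac{1}{\rho}\Delta w=-\tfrac12\tfrac{d}{dt}\|\nabla w\|_{L^2}^2$ exactly, avoiding any appearance of $\nabla\rho$ (which is not controlled in $L^\infty$), while the dissipative term becomes $\|\tfrac{1}{\sqrt\rho}\Delta w\|_{L^2}^2$, still coercive by \eqref{Q9}. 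Second, for the pressure the paper does not take $\mathrm{div}$ but simply uses the Helmholtz orthogonality $\|\Delta w\|_{L^2}^2+\|\nabla P_w\|_{L^2}^2\le 2\|\Delta w-\nabla P_w\|_{L^2}^2$ and then bounds the right-hand side directly from the equation. Finally, the paper's bootstrap hypothesis is $\|w\|_{L^3}^2+\|b\|_{L^3}^2\le c_1/(2C_{11}^2)$ rather than smallness of $\|\nabla w\|_{L^2}$; since $\|w\|_{L^3}\lesssim\|w\|_{L^2}^{1/2}\|\nabla w\|_{L^2}^{1/2}$ and $\|w\|_{L^2}$ is already controlled by Proposition~\ref{Q8}, the two choices are equivalent.
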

\begin{proof} We first get, by taking the $L^2$ inner product  of $(\ref{mhd2})_1$, $(\ref{mhd2})_2$ with
$\frac{1}{\rho}\Delta w,\Delta b$ respectively  and using  the {H}$\ddot{\mathrm{o}}$lder inequality, Young inequality, (\ref{Q9}) that,
\begin{align}\label{Q20}
&\frac12\frac{d}{dt}(\|\nabla w\|_{L^2}^2+\|\nabla b\|_{L^2}^2)+\|\frac{1}{\sqrt{\rho}}\Delta w\|_{L^2}^2 +\|\Delta b\|_{L^2}^2\nonumber\\
\le&C\|\frac{1}{\sqrt{\rho}}\Delta w\|_{L^2}\Big( \|\nabla P_w\|_{L^2}+ \|v\|_{L^\infty}\|\Delta w\|_{L^2}+ \|w\|_{L^3}\|\nabla w\|_{L^6}+\|a\|_{L^2}\|v_t+v\cdot\nabla v\|_{L^\infty}\nonumber\\
&+\| w\|_{L^2} \|\Delta v\|_{L^\infty} +\|{h}\|_{L^\infty}\|\Delta b\|_{L^2}+\|b\|_{L^2}\|\Delta {h}\|_{L^\infty}+\|b\|_{L^3}\|\Delta b\|_{L^6}\Big)\nonumber\\
&+C\|\Delta b\|_{L^2}\Big( \|b\|_{L^\infty}\|\Delta b\|_{L^2}+\|w\|_{L^2}\|\Delta {h}\|_{L^\infty} +\|w\|_{L^3}\|\Delta b\|_{L^6}\nonumber\\
&+ \|{h}\|_{L^\infty}\|\Delta w\|_{L^2}+\|\Delta v\|_{L^\infty}\| b\|_{L^2}
+\|b\|_{L^3}\|\Delta w\|_{L^6}\Big)\nonumber\\
\le&\frac{1}{16}\|\frac{1}{\sqrt{\rho}}\Delta w\|_{L^2}^2+ \frac{1}{16}\|\Delta b\|_{L^2}^2
+C\|\nabla P_w\|_{L^2}^2+C\|v_t+v\cdot\nabla v\|_{L^\infty}^2\nonumber\\
&+C(\|v\|_{L^\infty}^2+\|{h}\|_{L^\infty}^2)(\|\nabla w\|_{L^2}^2+\|\nabla b\|_{L^2}^2)+
C(\|\nabla v\|_{L^\infty}^2+\|\nabla {h}\|_{L^\infty}^2)(\| w\|_{L^2}^2+ \| b\|_{L^2}^2)\nonumber\\
&+C(\|w\|_{L^3}^2+\|b\|_{L^3}^2)(\|\nabla^2 w\|_{L^2}^2+\|\nabla^2 b\|_{L^2}^2).
\end{align}
Now, we give the estimate of pressure function $\nabla P_w$. Thanks to $\mathrm{div} w=0$, we obtain from the momentum equation in (\ref{mhd2}) that
\begin{align}\label{Q21}
\|\Delta w\|_{L^2}^2+\|\nabla P_w\|_{L^2}^2\le& 2\|\Delta w-\nabla P_w\|_{L^2}^2\nonumber\\
\le&   C \|\sqrt{\rho}w_t\|_{L^2}^2+C(\|v\|_{L^\infty}^2+\|{h}\|_{L^\infty}^2)(\|\nabla w\|_{L^2}^2+\|\nabla b\|_{L^2}^2)\nonumber\\
&+C(\|\nabla v\|_{L^\infty}^2+\|\nabla {h}\|_{L^\infty}^2)(\| w\|_{L^2}^2+ \| b\|_{L^2}^2)\nonumber\\
&+C(\|w\|_{L^3}^2+\|b\|_{L^3}^2)(\|\nabla^2 w\|_{L^2}^2+\|\nabla^2 b\|_{L^2}^2),
\end{align}
which along with (\ref{Q20}) leads to
\begin{align}\label{Q22}
&\frac{d}{dt}(\|\nabla w\|_{L^2}^2+\|\nabla b\|_{L^2}^2)+c_1(\|\nabla^2 w\|_{L^2}^2 +\|\nabla^2 b\|_{L^2}^2)\nonumber\\
\le&C \|\sqrt{\rho}w_t\|_{L^2}^2+C\|v_t+v\cdot\nabla v\cdot\nabla {h}\|_{L^\infty}^2+C(\|v\|_{L^\infty}^2+\|{h}\|_{L^\infty}^2)(\|\nabla w\|_{L^2}^2+\|\nabla b\|_{L^2}^2)\nonumber\\
&+
C(\|\nabla v\|_{L^\infty}^2+\|\nabla {h}\|_{L^\infty}^2)(\| w\|_{L^2}^2+ \| b\|_{L^2}^2)+C(\|w\|_{L^3}^2+\|b\|_{L^3}^2)(\|\nabla^2 w\|_{L^2}^2+\|\nabla^2 b\|_{L^2}^2).
\end{align}
Along the same line, we get by taking the $L^2$ inner-product of the equation of
$(\ref{mhd2})_1$, $(\ref{mhd2})_2$ with $w_t,b_t$ respectively that
\begin{align}\label{Q23}
&\frac{d}{dt}(\|\nabla w\|_{L^2}^2+\|\nabla b\|_{L^2}^2)+\|\sqrt{\rho}w_t\|_{L^2}^2 +\|b_t\|_{L^2}^2\nonumber\\
\le&C\|v_t+v\cdot\nabla v\|_{L^\infty}^2+C(\|v\|_{L^\infty}^2+\|{h}\|_{L^\infty}^2)(\|\nabla w\|_{L^2}^2+\|\nabla b\|_{L^2}^2)\nonumber\\
&+
C(\|\nabla v\|_{L^\infty}^2+\|\nabla {h}\|_{L^\infty}^2)(\| w\|_{L^2}^2+ \| b\|_{L^2}^2)+C(\|w\|_{L^3}^2+\|b\|_{L^3}^2)(\|\nabla^2 w\|_{L^2}^2+\|\nabla^2 b\|_{L^2}^2).
\end{align}
Combining (\ref{Q22}) with (\ref{Q23}), we deduce that there is a positive constant $c_2$ such
that
\begin{align}\label{Q24}
&\frac{d}{dt}(\|\nabla w\|_{L^2}^2+\|\nabla b\|_{L^2}^2)+c_2(\| w_t\|_{L^2}^2 +\|b_t\|_{L^2}^2)\nonumber\\
&\quad+(\frac{c_1}{2C_{11}}-C_{11}(\|w\|_{L^3}^2+\|b\|_{L^3}^2) )(\|\nabla^2 w\|_{L^2}^2 +\|\nabla^2 b\|_{L^2}^2) \nonumber\\
\le&C_{12}\Big(\|v_t+v\cdot\nabla v\| _{L^\infty}^2+(\|v\|_{L^\infty}^2+\|{h}\|_{L^\infty}^2)(\|\nabla w\|_{L^2}^2+\|\nabla b\|_{L^2}^2)\nonumber\\
&+
(\|\nabla v\|_{L^\infty}^2+\|\nabla {h}\|_{L^\infty}^2)(\| w\|_{L^2}^2+ \| b\|_{L^2}^2)\Big).
\end{align}
Denote
\begin{align}\label{Q25}
\tau^\ast\triangleq\sup\left\{ t\ge t_1,\quad\|w\|_{L^3}^2+\|b\|_{L^3}^2\le  \frac{c_1}{2C_{11}^2}\right\}.
\end{align}
We claim that $\tau^\ast=T^\ast$ provided that $ \|u_0\|_{\dot{B}_{p,1}^{-1+\frac{3}{p}}}+\|B_0\|_{\dot{B}_{p,1}^{-1+\frac{3}{p}}}$
is sufficiently small.

 Otherwise for
$t \in [t_1,\tau^\ast)$, we can get from (\ref{Q24}) that
\begin{align}\label{Q26}
&\frac{d}{dt}(\|\nabla w\|_{L^2}^2+\|\nabla b\|_{L^2}^2)+c_2(\|w_t\|_{L^2}^2 +\|b_t\|_{L^2}^2)+\frac{c_1}{4C_{11}}(\|\nabla^2 w\|_{L^2}^2 +\|\nabla^2 b\|_{L^2}^2) \nonumber\\
\le&C_{12}\Big(\|v_t+v\cdot\nabla v\| _{L^\infty}^2+(\|v\|_{L^\infty}^2+\|{h}\|_{L^\infty}^2)(\|\nabla w\|_{L^2}^2+\|\nabla b\|_{L^2}^2)\nonumber\\
&+
(\|\nabla v\|_{L^\infty}^2+\|\nabla {h}\|_{L^\infty}^2)(\| w\|_{L^2}^2+ \| b\|_{L^2}^2)\Big).
\end{align}
Applying Gronwall's inequality to (\ref{Q26}) and using (\ref{Q7}) give rise to
\begin{align}\label{Q27}
\|\nabla w\|_{L^2}^2+\|\nabla b\|_{L^2}^2\le& C_{12}\exp\{C_{12}\int_{t_1}^t(\|v\|_{L^\infty}^2+\|{h}\|_{L^\infty}^2)dt'\}\nonumber\\
&\times\int_{t_1}^t(\|v_t+v\cdot\nabla v\| _{L^\infty}^2+\|\nabla v\|_{L^\infty}^2+\|\nabla {h}\|_{L^\infty}^2)dt'\nonumber\\
\le& C_{13}(\|u_0\|_{\dot{B}_{p,1}^{-1+\frac{3}{p}}}^2+\|B_0\|_{\dot{B}_{p,1}^{-1+\frac{3}{p}}}^2).
\end{align}
However,  (\ref{Q8}) and (\ref{Q27}) tell us that
\begin{align}\label{Q28}
\| w\|_{L^3}^2+\|b\|_{L^3}^2&\le C(\|w\|_{L^2}\|\nabla w\|_{L^2}+\| b\|_{L^2}\|\nabla b\|_{L^2})\nonumber\\
&\le C_{14}(\|u_0\|_{\dot{B}_{p,1}^{-1+\frac{3}{p}}}^2+\|B_0\|_{\dot{B}_{p,1}^{-1+\frac{3}{p}}}^2)\le\frac{c_1}{4C_{11}^2}
\end{align}
for $t\in [t_1,\tau^\ast),$ provided that $\|u_0\|_{\dot{B}_{p,1}^{-1+\frac{3}{p}}}^2+\|B_0\|_{\dot{B}_{p,1}^{-1+\frac{3}{p}}}^2\le \frac{c_1}{4C_{11}^2C_{14}}$, which contradicts (\ref{Q25}). This, in turn, shows that
$\tau^\ast=T^\ast$. Then integrating (\ref{Q26}) and using (\ref{Q7}) lead to (\ref{Q19}). This completes the
proof of the proposition.
\end{proof}
\subsection{The $H^2$ estimate of  $(w,b)$.}\label{Q29}
\begin{proposition}\label{Q30}
Under the assumptions of Theorem \ref{zhuyaodingli}, there exists a time independent
constant C such that for $t_1 < t < T^\ast$
\begin{align}\label{Q31}
&\|\nabla^2 w\|_{L^\infty([t_1,t];L^2)}+\|\nabla^2 b\|_{L^\infty([t_1,t];L^2)}+\|\nabla w_t\|_{L^2([t_1,t];L^2)}+\|\nabla b_t\|_{L^2([t_1,t];L^2)}\nonumber\\
&\quad+\|\nabla ^2w\|_{L^2([t_1,t];L^6)}
+\|\nabla ^2b\|_{L^2([t_1,t];L^6)}\le C.
\end{align}
\end{proposition}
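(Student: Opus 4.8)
The plan is to establish \eqref{Q31} by a classical three–step scheme: a time–differentiated energy estimate for $(w,b)$, a Stokes/heat elliptic estimate converting bounds on $(w_t,b_t)$ into bounds on $(\nabla^2 w,\nabla^2 b,\nabla P_w)$, and a Gronwall argument which closes because of the smallness already secured in Propositions \ref{Q8} and \ref{Q18} together with the higher regularity of $(v,h)$ furnished by Proposition \ref{Q3} and Corollary \ref{Q6}. Throughout, I would use that the transport equation $\partial_t\rho+(v+w)\cdot\nabla\rho=0$ propagates $\rho\in L^\infty([t_1,t];{B}_{q,1}^{\frac3q})$ (via Proposition \ref{shuyun}), so that $\rho,\rho^{-1}$ and $\nabla\rho$ are controlled in the relevant Lebesgue norms, and the pointwise bound $(1+\|a_0\|_{{B}_{q,1}^{\frac3q}})^{-1}\le\rho\le\underline{d}^{-1}$ coming from \eqref{Q9}.

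First I would differentiate the momentum equation $(\ref{mhd2})_1$ and the magnetic equation $(\ref{mhd2})_2$ in time, take the $L^2$ inner product of the results with $w_t$ and $b_t$ respectively and sum, which yields on the left
$$
\frac12\frac{d}{dt}\big(\|\sqrt{\rho}\,w_t\|_{L^2}^2+\|b_t\|_{L^2}^2\big)+\|\nabla w_t\|_{L^2}^2+\|\nabla b_t\|_{L^2}^2,
$$
where I have used $\mathrm{div}\,w_t=0$ to kill $\int\nabla P_{w,t}\cdot w_t\,dx$. On the right the terms carrying $\partial_t\rho$ are rewritten through $\partial_t\rho=-(v+w)\cdot\nabla\rho$ followed by an integration by parts, the terms carrying $(v_t,h_t,v_{tt},h_{tt})$ are absorbed using Proposition \ref{Q3} and Corollary \ref{Q6}, and the remaining nonlinear and coupling terms in $(w,b,v,h)$ are bounded by Hölder's inequality, the Sobolev embeddings $H^1\hookrightarrow L^6$ and $H^1\hookrightarrow L^3$ in $\R^3$, and interpolation. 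The worst contributions then are of two types: one absorbed into the dissipation $\|\nabla(w_t,b_t)\|_{L^2}^2$ using the smallness of $\|(v,w,b)\|_{L^3}$ (itself a consequence of Propositions \ref{Q8} and \ref{Q18}), and one of the form $C\big(\|w\|_{L^3}^2+\|b\|_{L^3}^2\big)\big(\|\nabla^2 w\|_{L^2}^2+\|\nabla^2 b\|_{L^2}^2\big)$, absorbed because, by the bootstrap already carried out in Proposition \ref{Q18}, $\|w\|_{L^3}^2+\|b\|_{L^3}^2$ stays below the threshold $\frac{c_1}{2C_{11}^2}$ on the whole interval $[t_1,T^\ast)$; the rest go into Gronwall.

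Next I would use that $-\Delta w+\nabla P_w$ (with $\mathrm{div}\,w=0$) and $-\Delta b$ have \emph{constant} coefficients: the Stokes estimate gives $\|\nabla^2 w\|_{L^2}+\|\nabla P_w\|_{L^2}\lesssim\|\rho w_t+\rho(v+w)\cdot\nabla w-(\text{RHS of }(\ref{mhd2})_1)\|_{L^2}$ and the heat estimate gives $\|\nabla^2 b\|_{L^2}\lesssim\|b_t+\dots\|_{L^2}$, the right–hand sides being controlled exactly as in the previous step. Adding to the time–differentiated identity a suitably small multiple of \eqref{Q24} (so as to recreate a coercive $\|\nabla^2(w,b)\|_{L^2}^2$ term on the left), observing that at $t=t_1$ one has $w=b=0$, hence $w_t(t_1),b_t(t_1),\nabla P_w(t_1)$ are controlled by $v(t_1),h(t_1)$ which are smooth by Proposition \ref{Q3}, and invoking Gronwall's inequality with Corollary \ref{Q6}, I obtain the bounds for $\|\nabla^2 w\|_{L^\infty L^2}$, $\|\nabla^2 b\|_{L^\infty L^2}$, $\|\nabla w_t\|_{L^2 L^2}$, $\|\nabla b_t\|_{L^2 L^2}$. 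Finally the $L^2([t_1,t];L^6)$ bounds for $\nabla^2 w$ and $\nabla^2 b$ follow by applying the $L^6$–Stokes/heat estimates once more, using $\|w_t\|_{L^6}\lesssim\|\nabla w_t\|_{L^2}$ and $\|b_t\|_{L^6}\lesssim\|\nabla b_t\|_{L^2}$ (now in $L^2([t_1,t])$) together with the $H^2$ control just established.

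The main obstacle is the first step: systematically bookkeeping and uniformly–in–$t$ controlling the numerous time–differentiated nonlinear and coupling terms — in particular those involving $\partial_t\rho$ (which only inherits the low regularity of $\nabla\rho\in{B}_{q,1}^{\frac3q-1}$) and the magnetic couplings $\partial_t(h\cdot\nabla b)$, $\partial_t(b\cdot\nabla h)$, $\partial_t(b\cdot\nabla b)$ and their counterparts in the $b$–equation — so that every highest–order piece is either absorbed by the dissipation or multiplied by a factor that is small by Propositions \ref{Q8} and \ref{Q18}, thereby closing the estimate on all of $[t_1,T^\ast)$ with a constant independent of $t$.
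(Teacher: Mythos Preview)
Your proposal is correct and follows essentially the same route as the paper: time--differentiate $(\ref{mhd2})_1$ and $(\ref{mhd2})_2$, test against $(w_t,b_t)$, estimate the resulting nonlinear terms by H\"older/Sobolev/interpolation and the higher regularity of $(v,h)$ from Proposition \ref{Q3}, apply Gronwall to get $\sup_t(\|\sqrt\rho w_t\|_{L^2}^2+\|b_t\|_{L^2}^2)+\int(\|\nabla w_t\|_{L^2}^2+\|\nabla b_t\|_{L^2}^2)$, then feed this into the constant--coefficient Stokes/heat estimates to obtain the $L^\infty_tL^2$ bound on $\nabla^2(w,b)$ and finally the $L^2_tL^6$ bound.

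One minor organizational difference worth noting: you propose to absorb the dangerous cubic terms by adding a small multiple of \eqref{Q24} and invoking again the smallness threshold $\|w\|_{L^3}^2+\|b\|_{L^3}^2\le \tfrac{c_1}{2C_{11}^2}$. The paper instead closes the Gronwall directly on the time--differentiated identity alone, because the worst remaining factor is $\|\nabla w\|_{L^2}\|\nabla^2 w\|_{L^2}+\|\nabla b\|_{L^2}\|\nabla^2 b\|_{L^2}$, which is already $L^1$ in time by \eqref{Q19}; no further absorption is needed at this stage. Both routes work, but the paper's is slightly cleaner since it avoids re--opening the bootstrap from Proposition \ref{Q18}.
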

\begin{proof}
We get by first applying $\partial_t$ to the  first equation and second equation of (\ref{mhd2}) respectively and then taking the $L^2$
inner product of the resulting equation with $(w_t,b_t)$, that
\begin{align*}
&\frac12\frac{d}{dt}(\|\sqrt{\rho}w_t\|_{L^2}^2+\|b_t\|_{L^2}^2)+\|\nabla w_t\|_{L^2}^2+\|\nabla b_t\|_{L^2}^2\nonumber\\
=&\int_{\R^3}(1-\rho)w_t\cdot\partial_t(\Delta v-\nabla P_v-{h}\cdot\nabla {h})dx\nonumber\\
&- \int_{\R^3}\rho_tw_t\cdot(w_t+(v+w)\cdot w+w\cdot\nabla v+(\Delta v-\nabla P_v-{h}\cdot\nabla {h}))dx
\end{align*}
\begin{align}\label{Q32}
&- \int_{\R^3}\rho w_t\cdot((v+w)_t\cdot\nabla w+w_t\cdot\nabla v+w\cdot\nabla v_t)dx \nonumber\\                 &+\int_{\R^3}{h}_t\cdot\nabla b\cdot w_tdx+\int_{\R^3}b_t\cdot\nabla {h}\cdot w_tdx
+\int_{\R^3}b\cdot\nabla {h}_t\cdot w_tdx\nonumber\\
&+\int_{\R^3}b_t\cdot\nabla b\cdot w_tdx-\int_{\R^3}w_t\cdot\nabla {h}\cdot b_tdx-\int_{\R^3}w\cdot\nabla {h}_t\cdot b_tdx\nonumber\\
&+\int_{\R^3}{h}_t\cdot\nabla w\cdot b_tdx+\int_{\R^3}b_t\cdot\nabla v\cdot b_tdx+\int_{\R^3}b\cdot\nabla v_t\cdot b_tdx+\int_{\R^3}b_t\cdot\nabla w\cdot b_tdx\nonumber\\
\triangleq& \sum_{i=1}^{13}J_i.
\end{align}
$J_1, J_2, J_3$ can be estimated the same as in \cite{abidi2012},  that is,
\begin{align}\label{Q33}
|J_1|\le C \|a_0\|_{L^\infty} \|\sqrt{\rho}w_t\|_{L^2}\|\partial_t(\Delta v-\nabla P_v-{h}\cdot\nabla {h})\|_{L^2},
\end{align}
\begin{align}\label{Q34}
|J_2|\le&\frac14 \|\nabla w_t\|_{L^2}^2 +C \Big(\|v\|_{L^\infty}^4+ \|\nabla v\|_{L^6}^2+\|\Delta w\|_{L^2}^2
+\|v_t+v\cdot\nabla v\| _{L^2}^2\nonumber\\
&+\|\nabla^2 v\|_{L^6}^2
+\|\sqrt{\rho}w_t\|_{L^2}^2( \|v\|_{L^\infty}^2+\|\nabla w\|_{L^2}\|\nabla w^2 \|_{L^2})\nonumber\\
&+\|\sqrt{\rho}w_t\|_{L^2}+\|\nabla(\Delta v-\nabla P_v-{h}\cdot\nabla {h})\|_{L^4}\Big),
\end{align}
\begin{align}\label{Q35}
|J_3|\le&\frac14 \|\nabla w_t\|_{L^2}^2 +C \Big(
\|\sqrt{\rho}w_t\|_{L^2}^2( \|\nabla v\|_{L^\infty}+\|\nabla w\|_{L^2}\|\nabla w^2 \|_{L^2})\nonumber\\
&+\|\sqrt{\rho}w_t\|_{L^2}( \|v_t\|_{L^\infty}+ \|\nabla v_t\|_{L^4} )\Big).
\end{align}
Using the {H}$\ddot{\mathrm{o}}$lder inequality and Young's inequality implies
\begin{align}\label{Q36}
|J_4+J_{10}|\le C\|{h}_t\|_{L^\infty}( \|\nabla b\|_{L^2}\|\sqrt{\rho} w_t\|_{L^2}+\|\nabla w\|_{L^2}\| b_t\|_{L^2} )\le C\|{h}_t\|_{L^\infty}(\|\sqrt{\rho} w_t\|_{L^2}+ \| b_t\|_{L^2}),
\end{align}
\begin{align}\label{Q37}
|J_6+J_9+J_{12}|\le C(\| b\|_{L^4}\| \nabla {h}_t\|_{L^4}\|\sqrt{\rho} w_t\|_{L^2}+\| w\|_{L^4}\| \nabla {h}_t\|_{L^4}\|b_t\|_{L^2}+\| b\|_{L^4}\| \nabla v_t\|_{L^4}\|b_t\|_{L^2}),
\end{align}
\begin{align}\label{Q38}
|J_5+J_8+J_{11}|\le C(\|b_t\|_{L^2}\|\sqrt{\rho} w_t\|_{L^2}\|\nabla {h}\|_{L^\infty}+\|b_t\|_{L^2}^2\|\nabla v\|_{L^\infty} ),
\end{align}
\begin{align}\label{Q39}
|J_7+J_{13}|\le& C(\|b_t\|_{L^6}\|\sqrt{\rho} w_t\|_{L^2} \|\nabla b\|_{L^3}+\|b_t\|_{L^6}\|b_t\|_{L^2} \|\nabla w\|_{L^3} )\nonumber\\
\le&\frac{1}{16} \|\nabla b_t\|_{L^2} ^2+C\|\sqrt{\rho} w_t\|_{L^2}^2  \|\nabla b\|_{L^2}\|\nabla^2 b\|_{L^2}
+C\|b_t\|_{L^2}^2  \|\nabla w\|_{L^2}\|\nabla^2 w\|_{L^2}.
\end{align}
Taking above estimates (\ref{Q33})-(\ref{Q39}) into (\ref{Q32}), we have
\begin{align}\label{Q40}
&\frac{d}{dt}(\|\sqrt{\rho}w_t\|_{L^2}^2+\|b_t\|_{L^2}^2)+\|\nabla w_t\|_{L^2}^2+\|\nabla b_t\|_{L^2}^2\nonumber\\
\le& C((f_1(t)+f_3(t))(\|\sqrt{\rho}w_t\|_{L^2}^2+\|b_t\|_{L^2}^2)+f_2(t)+f_3(t)),
\end{align}
with
\begin{align*}
f_1(t)\triangleq&\|v\|_{L^\infty}^2+\|\nabla v\|_{L^\infty}+\|\nabla {h}\|_{L^\infty}+  \|\nabla w\|_{L^2}\|\nabla^2 w\|_{L^2} +\|\nabla b\|_{L^2}\|\nabla^2 b\|_{L^2},\nonumber\\
f_2(t)\triangleq&\|v\|_{L^\infty}^4+ \|\nabla v\|_{L^6}^2+\|\Delta w\|_{L^2}^2
+\|v_t+v\cdot\nabla v\| _{L^2}^2+\|\nabla^2 v\|_{L^6}^2,\nonumber\\
f_3(t)\triangleq& \|\nabla(\Delta v-\nabla P_v-{h}\cdot\nabla {h})\|_{L^4}+\|\partial_t(\Delta v-\nabla P_v-{h}\cdot\nabla {h})\|_{L^2}+\|v_t\|_{L^\infty}+ \|\nabla v_t\|_{L^4}\nonumber\\
&+\|{h}_t\|_{L^\infty}+ \|\nabla {h}_t\|_{L^4}.
\end{align*}
Gronwall's inequality helps us to get from (\ref{Q40}) for $t \in (t_1, T^\ast)$ that
\begin{align}\label{Q42}
&\|\sqrt{\rho}w_t\|_{L^2}^2+\|b_t\|_{L^2}^2+\int_{t_1}^t (\|\nabla w_t\|_{L^2}^2+\|\nabla b_t\|_{L^2}^2) dt'\nonumber\\
\le& C\exp\{ \int_{t_1}^t (f_1(t')+f_3(t')) dt'\}( \|\sqrt{\rho}w_t(t_1)\|_{L^2}^2+ \|b_t(t_1)\|_{L^2}^2+ \int_{t_1}^t (f_2(t')+f_3(t')) dt').
\end{align}
However, by embedding relations $$  \dot{B}_{p,1}^{\frac3p}(\R^3)\hookrightarrow L^\infty(\R^3),  \dot{B}_{p,1}^{\frac{3}{p}-\frac12}(\R^3)\hookrightarrow L^6(\R^3),\dot{B}_{p,1}^{\frac{3}{p}-\frac34}(\R^3)\hookrightarrow L^4(\R^3),$$
we deduce from
(\ref{Q4}), (\ref{Q5}), (\ref{Q8+1}) and (\ref{Q19}) that
\begin{align*}
 \int_{t_1}^t(f_1(t')+f_2(t')+f_3(t'))  dt'\le C,
\end{align*}
with $C$ being independent of t.
Whereas taking the $L^2$ inner product of the first and second
equation of $(\ref{mhd2})$ with $(w_t,b_t)$ at $t = t_1$  respectively and using the higher regularity of $(a,u,B,\nabla\Pi)$ give rise to
\begin{align*}
\|\sqrt{\rho}w_t(t_1)\|_{L^2}+ \|b_t(t_1)\|_{L^2}&\le C\|a_t(t_1)\|_{L^2}\|(v_t+v\cdot\nabla v)(t_1)\|_{L^\infty}\nonumber\\
&\le C \| (\Delta v-\nabla P_v-{h}\cdot\nabla {h})(t_1)\|_{\dot{B}_{p,1}^{\frac3p}} \nonumber\\
&\le C\|v(t_1)\|_{\dot{B}_{p,1}^{2+\frac3p}}+\|v(t_1)\|_{\dot{B}_{p,1}^{\frac3p}}\|v(t_1)\|_{\dot{B}_{p,1}^{1+\frac{3}{p}}}
+\|{h}(t_1)\|_{\dot{B}_{p,1}^{\frac3p}}\|{h}(t_1)\|_{\dot{B}_{p,1}^{1+\frac{3}{p}}} \nonumber\\
&\le C.
\end{align*}
As a consequence, we deduce from (\ref{Q42}) that
\begin{align}\label{Q45}
\sup_{t\in[t_1,T^\ast]} (\|\sqrt{\rho}w_t\|_{L^2}^2+\|b_t\|_{L^2}^2)+\int_{t_1}^{T^\ast} (\|\nabla w_t(t')\|_{L^2}^2+\|\nabla b_t(t')\|_{L^2}^2) dt'.
\end{align}
In the following, we give the estimates of the second space derivative estimate of $(w,b)$.

We first observe from the equation of (\ref{mhd2}) that
\begin{align}\label{Q46}
&\|\nabla^2 w\|_{L^2}+\|\nabla P_w\|_{L^2}+\|\nabla^2 b\|_{L^2}\nonumber\\
\lesssim& \|\sqrt{\rho}w_t\|_{L^2}+\|\rho w\cdot\nabla w\|_{L^2}+\|\rho v\cdot\nabla w\|_{L^2}+\|\rho w\cdot\nabla v\|_{L^2}+\|{h}\cdot\nabla b\|_{L^2}\nonumber\\
&+\|b\cdot\nabla {h}\|_{L^2}+\|b\cdot\nabla b\|_{L^2}
+\|(1-\rho)(v_t+v\cdot\nabla v)\|_{L^2}+\|w\cdot\nabla {h}\|_{L^2}+\|w\cdot\nabla b\|_{L^2}\nonumber\\&
+\|v\cdot\nabla b\|_{L^2}+\|{h}\cdot\nabla w\|_{L^2}+\|b\cdot\nabla w\|_{L^2}+\|b\cdot\nabla v\|_{L^2}+\|b_t\|_{L^2}\nonumber\\
\lesssim&\|\sqrt{\rho}w_t\|_{L^2}+\|b_t\|_{L^2}+\|w\|_{L^6}\|\nabla w\|_{L^2}^{\frac12}\|\nabla^2 w\|_{L^2}^{\frac12}+\|v\|_{L^\infty}\|\nabla w\|_{L^2}\nonumber\\
&+\|\nabla v\|_{L^\infty}\|w\|_{L^2}+\|{h}\|_{L^\infty}\|\nabla b\|_{L^2}+\|\nabla {h}\|_{L^\infty}\|b\|_{L^2}+\|b\|_{L^6}\|\nabla b\|_{L^2}^{\frac12}\|\nabla^2 b\|_{L^2}^{\frac12}\nonumber\\
&+
\|v\|_{\dot{B}_{p,1}^{2+\frac3p}}+\|v\|_{\dot{B}_{p,1}^{\frac3p}}\|v\|_{\dot{B}_{p,1}^{1+\frac{3}{p}}}
+\|{h}\|_{\dot{B}_{p,1}^{\frac3p}}\|{h}\|_{\dot{B}_{p,1}^{1+\frac{3}{p}}}+\|\nabla {h}\|_{L^\infty}\|w\|_{L^2}\nonumber\\
&+\|w\|_{L^6}\|\nabla b\|_{L^2}^{\frac12}\|\nabla^2 b\|_{L^2}^{\frac12}+\|v\|_{L^\infty}\|\nabla b\|_{L^2}+\|{h}\|_{L^\infty}\|\nabla w\|_{L^2}\nonumber\\
&+\|b\|_{L^6}\|\nabla w\|_{L^2}^{\frac12}\|\nabla^2 w\|_{L^2}^{\frac12}+\|\nabla v\|_{L^\infty}\|b\|_{L^2}\nonumber\\
\lesssim&\|\sqrt{\rho}w_t\|_{L^2}+\|b_t\|_{L^2} +\|\nabla w\|_{L^2}^3+\|\nabla w\|_{L^2}^2\|\nabla b\|_{L^2}
+\|\nabla w\|_{L^2}\|\nabla b\|_{L^2} ^2  \nonumber\\
&+\frac18(\|\nabla^2 w\|_{L^2}+\|\nabla^2 b\|_{L^2})
+(\|v\|_{L^\infty}+\|{h}\|_{L^\infty})(\|\nabla w\|_{L^2}+\|\nabla b\|_{L^2})\nonumber\\
&+(\|\nabla v\|_{L^\infty}+\|\nabla {h}\|_{L^\infty})(\|w\|_{L^2}+\|b\|_{L^2})
+
\|v\|_{\dot{B}_{p,1}^{2+\frac3p}}+\|v\|_{\dot{B}_{p,1}^{\frac3p}}\|v\|_{\dot{B}_{p,1}^{1+\frac{3}{p}}}+\|{h}\|_{\dot{B}_{p,1}^{\frac3p}}\|{h}\|_{\dot{B}_{p,1}^{1+\frac{3}{p}}},
\end{align}
where we have used the following fact:
\begin{align}\label{Q47}
\|\partial_tv\|_{\dot{B}_{p,1}^{\frac3p}}=&\|\Delta v-\mathbf{P}(v\cdot\nabla v)+\mathbf{P}({h}\cdot\nabla {h})\|_{\dot{B}_{p,1}^{\frac3p}}\nonumber\\
\le&\|v\|_{\dot{B}_{p,1}^{2+\frac3p}}+\|v\|_{\dot{B}_{p,1}^{\frac3p}}\|v\|_{\dot{B}_{p,1}^{1+\frac{3}{p}}}
+\|{h}\|_{\dot{B}_{p,1}^{\frac3p}}\|{h}\|_{\dot{B}_{p,1}^{1+\frac{3}{p}}},
\end{align}
which along with (\ref{Q4}), (\ref{Q5}), (\ref{Q8+1}), (\ref{Q19}) and (\ref{Q45}) ensures that
\begin{align}\label{Q48}
\sup_{t\in[t_1,T^\ast]} (\|\nabla^2 w\|_{L^2}+\|\nabla P_w\|_{L^2}+\|\nabla^2 b\|_{L^2})\le C.
\end{align}
On the other hand, let $(v, q)$ solve
$$-\Delta v+\nabla q=f,\quad\quad \mathrm{div} v=0.$$
Then one has $\nabla q=-\nabla(-\Delta)^{-1} \mathrm{div} f,$ and for any $r\in (1,\infty),$
$$\|\nabla q\|_{L^r}\le C\|f\|_{L^r},\quad \|\Delta v\|_{L^r}\le C\|f\|_{L^r}.$$
From this and the equation of (\ref{mhd2}) we infer
\begin{align*}
&\|\nabla^2 w\|_{L^6}+\|\nabla P_w\|_{L^6}+\|\nabla^2 b\|_{L^6}\nonumber\\
\lesssim&\|\nabla w_t\|_{L^2}+\|\nabla b_t\|_{L^2}+\|\nabla w\|_{L^2}^2\|\nabla^2 w\|_{L^2}
+\|\nabla^2 w\|_{L^2}\|\nabla b\|_{L^2} ^2  +\|\nabla w\|_{L^2}^2\|\nabla^2 b\|_{L^2} \nonumber\\
&+\frac18(\|\nabla^2 w\|_{L^6}+\|\nabla^2 b\|_{L^6})
+(\|v\|_{L^\infty}+\|{h}\|_{L^\infty})(\|\nabla ^2w\|_{L^2}+\|\nabla b^2\|_{L^2})\nonumber\\
&+(\|\nabla v\|_{L^\infty}+\|\nabla {h}\|_{L^\infty})(\|\nabla w\|_{L^2}+\|\nabla b\|_{L^2})
+
\|v\|_{\dot{B}_{p,1}^{2+\frac3p}}+\|v\|_{\dot{B}_{p,1}^{\frac3p}}\|v\|_{\dot{B}_{p,1}^{1+\frac{3}{p}}}+\|{h}\|_{\dot{B}_{p,1}^{\frac3p}}\|{h}\|_{\dot{B}_{p,1}^{1+\frac{3}{p}}},
\end{align*}
which implies
\begin{align}\label{Q50}
&\|\nabla^2 w\|_{L^6}+\|\nabla P_w\|_{L^6}+\|\nabla^2 b\|_{L^6}\nonumber\\
\lesssim&\|\nabla w_t\|_{L^2}+\|\nabla b_t\|_{L^2}+\|\nabla w\|_{L^2}^2\|\nabla^2 w\|_{L^2}
+\|\nabla^2 w\|_{L^2}\|\nabla b\|_{L^2} ^2  +\|\nabla w\|_{L^2}^2\|\nabla^2 b\|_{L^2} \nonumber\\
&
+(\|v\|_{L^\infty}+\|{h}\|_{L^\infty})(\|\nabla ^2w\|_{L^2}+\|\nabla b^2\|_{L^2})+(\|\nabla v\|_{L^\infty}+\|\nabla {h}\|_{L^\infty})(\|\nabla w\|_{L^2}+\|\nabla b\|_{L^2})
\nonumber\\
&+
\|v\|_{\dot{B}_{p,1}^{2+\frac3p}}+\|v\|_{\dot{B}_{p,1}^{\frac3p}}\|v\|_{\dot{B}_{p,1}^{1+\frac{3}{p}}}+\|{h}\|_{\dot{B}_{p,1}^{\frac3p}}\|{h}\|_{\dot{B}_{p,1}^{1+\frac{3}{p}}}.
\end{align}
Taking the $L^2$ norm for the time variables on $[t_1,t]$, we get by using (\ref{Q8+1}), (\ref{Q45}) and (\ref{Q48})  that
\begin{align}\label{Q51}
&\|\nabla^2 w\|^2_{L^2([t_1,t];L^6)}+\|\nabla P_w\|^2_{L^2([t_1,t];L^6)}+\|\nabla^2 b\|^2_{L^2([t_1,t];L^6)}\nonumber\\
\le& C(\|\nabla w_t\|^2_{L^2([t_1,t];L^2)}+\|\nabla b_t\|^2_{L^2([t_1,t];L^2)})+
C(\|\nabla ^2w\|^2_{L^2([t_1,t];L^2)}+\|\nabla ^2b\|^2_{L^2([t_1,t];L^2)})\nonumber\\
&+
C\|v\|^2_{L^2([t_1,t];{\dot{B}_{p,1}^{2+\frac3p}})}+C\|v\|^2_{L^2([t_1,t];{\dot{B}_{p,1}^{1+\frac{3}{p}}})}
+C\|{h}\|^2_{L^2([t_1,t];{\dot{B}_{p,1}^{1+\frac{3}{p}}})}\nonumber\\
\le& C.
\end{align}
This completes the proof of the proposition.
\end{proof}
\subsection{Proof of Theorem \ref{zhuyaodingli}.}\label{Q52}
 We then rewrite the equations for $u$ and $B$ in \eqref{mhdmoxing} as
\begin{eqnarray*}
\left\{\begin{aligned}
&\partial_t u-\Delta u+\nabla\Pi=B\cdot\nabla B-u\cdot\nabla u+\frac{a}{1+a}(\partial_t u+u\cdot\nabla u),\\
&\partial_t B-\Delta B=B\cdot\nabla u-u\cdot\nabla B.
\end{aligned}\right.
\end{eqnarray*}
Then it is easy to observe that for $t\in[t_1,T^*)$
\begin{align}\label{mingming4+1}
&\|(u,B)\|_{\widetilde{L}^\infty([t_1,t];\dot{B}_{p,1}^{-1+\frac{3}{p}})}+\|(\Delta u,\Delta B,\nabla\Pi)\|_{L^1([t_1,t];\dot{B}_{p,1}^{-1+\frac{3}{p}})}\nonumber\\
\lesssim&\|(u(t_1),B(t_1))\|_{\dot{B}_{p,1}^{-1+\frac{3}{p}}}
+\Big\|\frac{a}{1+a}(\partial_t u+u\cdot\nabla u)\Big\|_{L^1([t_1,t];\dot{B}_{p,1}^{-1+\frac{3}{p}})}\nonumber\\
&+\|(u\cdot\nabla u,B\cdot\nabla B,B\cdot\nabla u,u\cdot\nabla B)\|_{L^1([t_1,t];\dot{B}_{p,1}^{-1+\frac{3}{p}})}.
\end{align}

By the product law in Besov spaces gives
\begin{align*}
\Big\|\frac{a}{1+a}(\partial_tu+u\cdot\nabla u)\Big\|_{L^1([t_1,t];\dot{B}_{p,1}^{-1+\frac{3}{p}})}\lesssim \big(1+\|a\|_{L^\infty([t_1,t];\dot{B}_{q,1}^{\frac{3}{q}})}\big)
\|\partial_tu+u\cdot\nabla u\|_{L^1([t_1,t];\dot{B}_{p,1}^{-1+\frac{3}{p}})}.
\end{align*}
Yet thanks to Lemma \ref{bernstein} and (\ref{Q31}), one has
\begin{align*}
\|\partial_tu\|_{L^1([t_1,t];\dot{B}_{p,1}^{-1+\frac{3}{p}})}\le C(t^{\frac12}\|\partial_tw\|_{L^2([t_1,t];{H}^1)}+\|u(t_1)\|_{\dot{B}_{p,1}^{-1+\frac{3}{p}}}  )\le C(1+t^{\frac12})
\end{align*}
and
\begin{align}\label{Q57}
&\|u\cdot\nabla u\|_{L^1([t_1,t];\dot{B}_{p,1}^{-1+\frac{3}{p}})}+\|{B}\cdot\nabla {B}\|_{L^1([t_1,t];\dot{B}_{p,1}^{-1+\frac{3}{p}})}
+\|u\cdot\nabla B\|_{L^1([t_1,t];\dot{B}_{p,1}^{-1+\frac{3}{p}})}+\|{B}\cdot\nabla u\|_{L^1([t_1,t];\dot{B}_{p,1}^{-1+\frac{3}{p}})}\nonumber\\
\le& C \int_{t_1}^t (\|\nabla w\|_{L^2}\|\Delta w\|_{L^2} +\|v\|^2_{\dot{B}_{p,1}^{\frac3p}}) dt'+C \int_{t_1}^t (\|\nabla b\|_{L^2}\|\Delta b\|_{L^2} +\|{h}\|^2_{\dot{B}_{p,1}^{\frac3p}}) dt'
\le C.
\end{align}
Thanks to Theorem 2.87 in \cite{bcd} and Proposition \ref{shuyun}, we have
\begin{align}\label{Q58}
\Big\|\frac{a}{1+a}\Big\|_{\widetilde{L}^\infty([t_1,t];\dot{B}_{q,1}^{\frac3q})}\le C\|a\|_{\widetilde{L}^\infty([t_1,t];\dot{B}_{q,1}^{{\frac3q}})}\le C \|a(t_1)\|_{\dot{B}_{q,1}^{\frac3q}}\exp\{ C  \int_{t_1}^t \|u(t')\|_{\dot{B}_{ 6,1}^{\frac32}} dt'\}.
\end{align}
By Lemma \ref{bernstein}, we have
\begin{align*}
\|u\|_{\dot{B}_{ 6,1}^{\frac32}}\le C \|\nabla w\|_{L^6}^{\frac12} \|\nabla^2 w\|_{L^6}^{\frac12}+C\|v\|_{\dot{B}_{ p,1}^{1+\frac3p}},
\end{align*}
which along with (\ref{Q19})  and (\ref{Q31})  implies
\begin{align*}
&\|\nabla u\|_{L^1([t_1,t];L^\infty)}+\|u\|_{L^1([t_1,t];\dot{B}_{6,1}^{\frac32})}\le C\|u\|_{L^1([t_1,t];\dot{B}_{6,1}^{\frac32})}\nonumber\\
\le& C\|v\|_{L^1([t_1,t];\dot{B}_{p,1}^{1+\frac{3}{p}})}+t^{\frac12} \|\Delta w\|_{L^2([t_1,t];L^2)}^{\frac12}\|\Delta w\|_{L^2([t_1,t];L^6)}^{\frac12}\nonumber\\
\le& C (1+t^{\frac12}).
\end{align*}
Therefore, we obtain
\begin{align}\label{Q61}
\Big\|\frac{a}{1+a}(\partial_tu+u\cdot\nabla u)\Big\|_{L^1([t_1,t];\dot{B}_{p,1}^{-1+\frac{3}{p}})}\le  C \|a(t_1)\|_{\dot{B}_{q,1}^{\frac3q}}\exp\{Ct^{\frac12}\}.
\end{align}
 Taking estimates (\ref{Q57}), (\ref{Q61}) into (\ref{mingming4+1}) and   applying Gronwall's inequality, one can finally get
\begin{align}\label{Q62}
&\|(u,B)\|_{\widetilde{L}^\infty([t_1,t];\dot{B}_{p,1}^{-1+\frac{3}{p}})}
+\|(u,B)\|_{L^1([t_1,t];\dot{B}_{p,1}^{1+\frac{3}{p}})}
+\|\nabla \Pi\|_{L^1([t_1,t];\dot{B}_{p,1}^{-1+\frac{3}{p}})}\nonumber\\
\le& C( \|a_0\|_{{B}_{q,1}^{\frac{3}{q}}}+\|(u_0,B_0)\|_{\dot{B}_{p,1}^{-1+\frac{3}{p}}})\exp\{Ct^{\frac12}\},
\end{align}
from which and \eqref{Q58}, we  can complete the proof of  Theorem \ref{zhuyaodingli} by a standard argument.

\vspace*{2em} \noindent\textbf{Acknowledgements} This work was
partially supported by NNSFC (No.11271382), RFDP (No.
20120171110014), MSTDF (No. 098/2013/A3), and Guangdong Special Support Program (No. 8-2015).
\bigskip
\bigskip
\bigskip

\noindent{\large\bf  References}

\end{document}